\newtheorem{theorem}{Theorem}[section]
\newtheorem{lemma}[theorem]{Lemma}
\newtheorem{proposition}[theorem]{Proposition}
\newtheorem{corollary}[theorem]{Corollary}
\newtheorem{assumption}[theorem]{Assumption}
\theoremstyle{remark}
\newtheorem{remark}[theorem]{Remark}
\newcommand{\ZZ}{\mathbb{Z}}
\newcommand{\PP}{\mathbf{P}}
\DeclarePairedDelimiter\ceil{\lceil}{\rceil}
\DeclarePairedDelimiter\floor{\lfloor}{\rfloor}
\begin{document}

\author{David Croydon$^1$}
\address{$^1$Department of Statistics, University of Warwick}
\email{d.a.croydon@warwick.ac.uk}
\author{Stephen Muirhead$^2$}
\address{$^2$Department of Mathematics, University College London}
\email{s.muirhead@ucl.ac.uk}
\title[Functional limit theorems for the BTM with slowly varying traps]{Functional limit theorems for the Bouchaud trap model with slowly varying traps}
\begin{abstract}
We consider the Bouchaud trap model on the integers in the case that the trap distribution has a slowly varying tail at infinity. Our main result is a functional limit theorem for the model under the annealed law, analogous to the functional limit theorems previously established in the literature in the case of integrable or regularly varying trap distribution. Reflecting the fact that the clock process is dominated in the limit by the contribution from the deepest-visited trap, the limit process for the model is a spatially-subordinated Brownian motion whose associated clock process is an extremal process.
\end{abstract}
\subjclass[2010]{60K37; 60F17}
\keywords{Bouchaud trap model; scaling limit; slowly varying tails; extremal processes}
\thanks{Stephen Muirhead was supported by a Graduate Research Scholarship from University College London and the Leverhulme Research Grant RPG-2012-608 held by Nadia Sidorova.}
\date{\today}
\maketitle

\section{Introduction}
The Bouchaud trap model (BTM) is of general interest in the study of stochastic processes due to its utility as a natural toy model for many diverse trapping phenomena (see, for example, the lecture notes of \cite{BenArous06}). Although applications often call for traps whose depths are integrable or regularly varying (including in the setting in which the BTM was introduced, see \cite{Bouchaud92}), the case of slowly varying traps is receiving growing attention in the literature. Indeed, slowly varying traps have been shown to arise naturally in the study of certain random walks in random environments, such as biased random walks on critical Galton-Watson trees \cite{Croydon13}, as well in the study of spin-glass dynamics on subexponential time scales \cite{BenArous12, Bovier13}. With regards to the BTM with slowly varying traps in particular, recent work has established the nature of localisation \cite{Muirhead14} and ageing \cite{Gun13} that can occur, which turns out to be qualitatively different from the equivalent phenomena in the case of integrable or regularly varying traps. The present work continues this study, giving a functional limit theorem for the BTM on the integers with slowly varying traps. Again, the resulting behaviour of the model with slowly varying traps is qualitatively different from the equivalent limits in the case of integrable or regularly varying traps.

Let us start by defining the BTM on the integers. First, suppose $\tau = (\tau_x)_{x \in \ZZ}$ is a collection of independent and identically-distributed (i.i.d.)\ strictly-positive random variables whose common distribution has a slowly varying tail, by which we mean that the non-decreasing function
\[ L(u) := \frac{1}{\PP(\tau_0 > u)}  \]
satisfies the \textit{slow-variation} property
\begin{align}
\label{eq:slow}
\lim_{ u \to \infty} \frac{ L(uv) }{L(u)} = 1, \quad \text{for any } v > 0.
\end{align}
We will refer to $\tau$ as the \textit{trapping landscape}; let $\PP$ denote its law and $\mathbf{E}$ the corresponding expectation. The BTM in the trapping landscape $\tau$ is the continuous-time Markov chain $X = (X_t)_{t \ge 0}$ on $\ZZ$ with transition rates
\[ w_{x \to y} := \begin{cases}
\frac{1}{2\tau_x},
  & \text{if } |x-y|=1, \\
0, &  \text{otherwise.}
 \end{cases}  \]
As always for random processes in random media, we distinguish between the \emph{quenched} and \emph{annealed} law of the BTM. For each realisation of the trapping landscape $\tau$, write $P_x^\tau$ for the law of the Markov chain with the above transition rates started from $x$; this is the \emph{quenched law} of the BTM. The corresponding \emph{annealed law} is then obtained as the semi-direct product
\[\mathbb{P}_x\left(\cdot\right)=\int {P}^\tau_x\left(\cdot\right)d\PP.\]
Our main result, which will be presented in Section \ref{mr} below, is to establish a functional limit theorem for $X$ under the annealed law $\mathbb{P}_0$. We note that quenched scaling limits (i.e.\ $P_x^\tau$-distributional scaling limits for $\PP$-a.e.\ trapping landscape) are not available in this setting, since there is no homogenisation of the trapping landscape under the relevant scaling.

The equivalent functional limit theorem in the case where $\tau_0$ has (i) integrable or (ii) regularly varying tails has previously been established in \cite{BenArous13}, building on the single-time scaling limit proved in \cite{FIN02}. In the case of integrable tails, the scaling limit for $X$ is just the standard Brownian motion, with the law of large numbers acting to smooth out the effect of the traps in the limit. In the case of regularly varying tails of index $\alpha \in (0, 1)$, the scaling limit for $X$ is a spatially-subordinated Brownian motion known as the FIN diffusion, which was introduced in \cite{FIN02}. Our scaling limit can be seen as the natural analogue of the FIN diffusion in the limiting case $\alpha = 0$; this statement is made precise in Theorem~\ref{thm:alpha0} presented below.

We also study a natural `transparent' generalisation of the BTM, whereby the random walk $X$, at each visit to a site $x \in \ZZ$, has a certain chance of `avoiding' the trap located at $x$. Transparent trap models were first considered in \cite{BenArous13}, where it was observed that very simple transparency mechanisms can yield a variety of diverse scaling limits. In particular, for a parameter $\beta \ge 0$ we define the $\beta$-transparent BTM as the (non-Markovian) symmetric nearest-neighbour random walk $X^\beta = (X^\beta_t)_{t \ge 0}$ on $\ZZ$ whose holding time on the $i^\mathrm{th}$ visit to a site $x \in \ZZ$ is distributed as
\[ \tau^i_x := \begin{cases}
 \tau_x \xi_x^i, & \text{with probability }  \min \left\{ \frac{1}{\tau_x^\beta}, 1 \right\} ,\\
\xi_x^i, & \text{otherwise},
\end{cases} \]
where $\{\xi_x^i\}_{x \in \mathbb{Z}, i \in \mathbb{N}}$ is a collection of i.i.d.\ unit-mean exponential random variables. In other words, the chain $X^\beta$ evolves just as the BTM $X$ except that a trap at $x \in \ZZ$ is ignored, at each visit to $x$, with probability $\max\{1 - \tau_x^{-\beta},0\}$ independently of all other sources of randomness; setting $\beta = 0$ recovers the BTM. In \cite{BenArous13}, a partition of the parameter space was obtained for the above model\footnote{Actually, in  \cite{BenArous13} the model considered had $\xi_x^i\equiv 1$, although this change does not affect the scaling limits.} in the case of regularly varying traps, indicating the rich variety of scaling limits that may arise; see Figure~\ref{fig}. In particular, each of the standard Brownian motion, the FIN diffusion, as well as the fractional kinetics (FK) process can arise through this transparency mechanism; see \cite{BenArous13} or \cite{BenArous06} for the definition and basic properties of the FK process. We consider the $\beta$-transparent BTM with slowly varying traps, confirming that the partition of the parameter space in Figure~\ref{fig} remains valid on the $\alpha = 0$ boundary, a case that was not treated in \cite{BenArous13}. Incidentally, this raises the interesting question as to what other processes, apart from FK and the scaling limit of the BTM, may arise as the scaling limit of transparent trap models as the parameters $\alpha, \beta \to 0$ simultaneously; this question will be addressed in upcoming work.
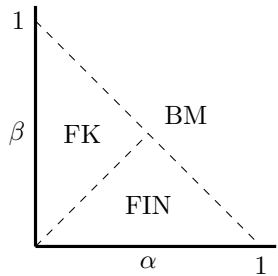
\begin{figure}[t]
\label{fig}
\begin{tikzpicture}
\draw [very thick] (0, 0) -- (1.5, 0) node[anchor=north] {$\alpha$} -- (3, 0)  node[anchor=north] {$1$} -- (3.2, 0);
\draw [very thick] (0, 0) -- (0, 1.5) node[anchor=east] {$\beta$} -- (0, 3)  node[anchor=east] {$1$} -- (0, 3.2) ;
\draw [thin, dashed] (0, 3) -- (3, 0);
\draw [thin, dashed] (0, 0) -- (1.5, 1.5);
\draw (1.5, 0.8) node[anchor=north] {\text{FIN}};
\draw (1, 1.5) node[anchor=east] {\text{FK}};
\draw (2, 2) node[anchor=north] {\text{BM}};
\end{tikzpicture}
\caption{Partition of the parameter space of the $\beta$-transparent trap model according to scaling limit due to \cite{BenArous13}. Here the labels `BM', `FIN' and `FK' indicate that the respective scaling limits are the standard Brownian motion, the FIN diffusion and the FK process. Recall that $\alpha$ denotes the index of regular-variation of the traps; slow-variation corresponds to the $\alpha = 0$ boundary.}
\end{figure}

\subsection{The scaling limit}
\label{subsec:scalinglimit}
In this section, we introduce the scaling limit of $X$ as a time-changed (or \textit{subordinated}) standard Brownian motion. Motivating this description, and key to the proof of our functional limit theorem, is the observation that every symmetric time-homogeneous nearest-neighbour continuous time random walk on $\ZZ$ can be expressed as a time-changed simple random walk, where the time-change may depend on the realisation of the underlying random walk (see, for example, \cite{BenArous06}). To see this for the process $X$, let $S = (S_i)_{i \in \mathbb{N}}$ be a discrete-time simple random walk (SRW) on $\ZZ$ and let $\xi = (\xi_i)_{i \in \mathbb{N}}$ be a collection of i.i.d.\ unit-mean exponential random variables, with $S$, $\xi$ and $\tau$ independent. Define an $S$-dependent \textit{clock process} $A = (A_n)_{n \ge 0}$ by setting
\[ A_n := \sum_{i \le \floor{n}} \xi_i \tau_{S_i}, \]
and let $I^S=(I^S_t)_{t\geq 0}$ be its right-continuous inverse, defined by
\[ I^S_t := \inf \{ n : A^S_n > t \}. \]
It is not hard to see that the law of $X$ under $\mathbb{P}_0$ is identical to that of $S_{I^S}$. In other words, the process $X$ may equivalently be defined via a subordination of the simple random walk $S$ by the clock process $A$.

We now present an analogous construction in the continuous setting that will serve to describe the scaling limit of $X$. Let $\mathcal{P} = (x_i, v_i)_{i \in \mathbb{N}}$ be an inhomogeneous Poisson point process on $\mathbb{R} \times \mathbb{R}^+$ with intensity measure $v^{-2} d x \, d v$; this process can be viewed as the scaling limit for the trapping landscape. Denote by $B = (B_t)_{t \ge 0}$ standard Brownian motion (independent of $\mathcal{P}$). Let $m^B = (m^B_t)_{t \ge 0}$ be the $B$-explored extremal process for $\mathcal{P}$, defined by
\begin{align}
m^B_t := \sup\left\{ v_i :  \inf_{s \in [0, t]} B_s \le x_i \le  \sup_{s \in [0, t]} B_s  \right\},\label{mtdef}
\end{align}
and let $I^B = (I^B_t)_{t \ge 0}$ be its right-continuous inverse, i.e.\ $I^B_t :=   \inf \{  s : m^B_s >  t \}$. We will identify the processes $m^B$ and $B_{I^B}$ as the scaling limits of $A$ and $X$ respectively.

We claim that $B_{I^B}$ is the natural analogue of the FIN diffusion with parameter $\alpha \in (0, 1)$ in the limiting case $\alpha = 0$. To make this precise, we first observe how the FIN diffusion can similarly be defined as the standard Brownian motion $B$, time-changed by a clock-process that is a function of $B$, the point-process $\mathcal{P}$ and the parameter $\alpha$. We then use this representation to show in Theorem~\ref{thm:alpha0} below that the FIN diffusion with parameter $\alpha$ converges almost-surely, under suitable rescaling, to the process $B_{I^B}$ as the parameter $\alpha \to 0$, where by almost-surely we mean with respect to the joint law of $\mathcal{P}$ and $B$. Because of this result, we refer to the process $B_{I^B}$ as the \textit{extremal FIN process}. (Note that we discuss the corresponding $\alpha\to1^-$ limit in Remark~\ref{alpha1rem}.)

Note that $B_{I^B}$ is a highly singular process. Indeed, conditional on $\mathcal{P}$, its probability mass is concentrated, at each time $t > 0$, on the two sites
\[ z_t^1 := \min \{ x_i \ge 0 : v_i > t \} \quad \text{and} \quad z_t^2 := \max \{ x_i \le 0 : v_i > t \},\]
in proportion to their hitting probability with respect to $B$. In other words, conditional on $\mathcal{P}$,
\[ B_{I^B_t} = \begin{cases}
 z_t^1 , &   \text{with probability } |z_t^1| / (z_t^1  -  z_t^2 ), \\
 z_t^2, & \text{with probability } |z_t^2| / (z_t^1  -  z_t^2 ).
 \end{cases} \]
That we have such localisation arising in the scaling limit is a consequence of the slowly varying tail of the trap distribution $\tau_0$, which means the clock process is dominated in the limit by the contribution from the deepest-visited trap so far. (As noted above, the localisation properties of the discrete model have previously been studied in \cite{Muirhead14}.)

\subsection{Main results}\label{mr}

We are now ready to state our functional limit theorem for the BTM, identifying $B_{I^B}$ as its scaling limit (see Theorem~\ref{thm:fltX} below). As in the cases of integrable or regularly varying traps, the key step towards this is establishing a functional limit theorem for the clock process $A$ (see Theorem~\ref{thm:fltA} below). We stress that both of these results hold for the BTM with arbitrary slowly varying tail distribution $\tau_0$.

Concerning topological issues, the conclusion of Theorem~\ref{thm:fltA} is stated in the Skorohod space of real-valued c\`{a}dl\`{a}g functions on $\mathbb{R}^+$, $D(\mathbb{R}^+)$, equipped with the Skorohod $M_1$ topology. For Theorem~\ref{thm:fltX}, we use the same state space, but the coarser non-Skorohod $L_{1,\rm{loc}}$ topology. We will later also refer to the usual Skorohod $J_1$ and uniform (over compact time intervals) topologies on $D(\mathbb{R}^+)$; detailed definitions of the $J_1$, $M_1$ and $L_{1, \rm{loc}}$ topologies and the relation between them are provided in Appendix \ref{sec:appendix}. As for notation, we write $\stackrel{J_1}{\Rightarrow}$, $\stackrel{M_1}{\Rightarrow}$ and $\stackrel{L_1}{\Rightarrow}$ for weak convergence in the $J_1$, $M_1$ and $L_{1,\rm{loc}}$ topologies respectively.

\begin{theorem}[Functional limit theorem for the clock process]
\label{thm:fltA} Under $\mathbb{P}_0$, as $n \to \infty$,
 \begin{equation}\label{alim}
 \left( \frac{1}{n} L \left( \frac{1}{n}  A_{n^2 t } \right) \right)_{t\geq 0} \stackrel{M_1}{\Rightarrow} \left(m^B_t  \right)_{t\geq 0}
 \end{equation}
in distribution.
\end{theorem}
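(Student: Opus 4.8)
The plan is to realise both sides of \eqref{alim} through the common construction via the simple random walk $S$, and then to show that, under the stated rescaling, the clock process $A$ is asymptotically the contribution of the \emph{single} deepest trap visited so far — the bounded (but random) local-time factor multiplying that trap's depth being rendered invisible by the slow variation of $L$. The starting point is the joint weak convergence of the inputs: writing $\Pi_n := \sum_{x \in \ZZ}\delta_{(x/n,\,L(\tau_x)/n)}$ for the rescaled trapping landscape and $\ell^S_m(x) := \#\{i \le m : S_i = x\}$ for the occupation counts of $S$, I would first show
\[ \Bigl(\Pi_n,\ \bigl(n^{-1}S_{\floor{n^2 t}}\bigr)_{t \ge 0},\ \bigl(n^{-1}\ell^S_{\floor{n^2 t}}(\floor{ny})\bigr)_{t \ge 0,\, y \in \mathbb{R}},\ \xi\Bigr)\ \Longrightarrow\ \bigl(\mathcal{P},\ B,\ \ell^B,\ \xi\bigr) \]
jointly, with $\mathcal{P}$ independent of $(B,\ell^B)$ and with $\xi$ (the i.i.d.\ exponentials) carried through unchanged for later bookkeeping. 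The marginal $\Pi_n \Rightarrow \mathcal{P}$ is a routine Poisson limit: the slow-variation property \eqref{eq:slow} together with standard facts on (generalised) inverses of regularly varying functions gives $n\,\PP(L(\tau_0)/n > v) \to v^{-1}$, hence convergence of the intensity measures to $v^{-2}\,dx\,dv$; the remaining coordinates are Donsker's theorem and the invariance principle for the occupation counts of $S$.

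Next, fix $t > 0$ (the same argument runs for finitely many times jointly) and pass to a Skorohod coupling so that the above convergence is almost sure. Let $x^\ast = x^\ast(n^2 t)$ be the site of the deepest trap visited by time $n^2 t$, put $M_n := \tau_{x^\ast}$, $\Xi_x := \sum_{i \le n^2 t,\, S_i = x}\xi_i$ and $R_{n^2 t} := \{S_i : i \le n^2 t\}$, so that $A_{n^2 t} = \sum_x \tau_x \Xi_x$. Since $R_{n^2 t}/n \to [\inf_{[0,t]}B, \sup_{[0,t]}B]$ and almost surely no point of $\mathcal{P}$ lies on its boundary, continuous mapping gives $\tfrac1n L(M_n) = \max_{x \in R_{n^2 t}} \tfrac1n L(\tau_x) \to m^B_t$, that $x^\ast/n$ converges to the $x$-coordinate $X^\ast$ of the $\mathcal{P}$-point realising $m^B_t$ (which lies strictly inside the limiting interval), and hence, by continuity of Brownian local time, $\tfrac1n\ell^S_{n^2 t}(x^\ast) \to \ell^B_t(X^\ast) =: \mathcal{L}_t$, which is almost surely in $(0,\infty)$. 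Splitting off $x^\ast$,
\[ \tfrac1n A_{n^2 t} = M_n \cdot \tfrac{\Xi_{x^\ast}}{n} + \tfrac1n \sum_{x \neq x^\ast}\tau_x \Xi_x, \]
the first term equals $M_n(\mathcal{L}_t + o(1))$ by the law of large numbers for the $\xi_i$ over the (divergent number of) visits to $x^\ast$, while the second is at most $n^{-1}(\max_x \Xi_x)\bigl(\sum_{x \in R_{n^2 t}}\tau_x - M_n\bigr) = o(M_n)$ in probability, using $\max_x \Xi_x = O(n)$ in probability (tightness of the rescaled maximal occupation count, whose limit $\max_y \ell^B_t(y)$ is almost surely finite, plus an exponential tail bound for the $\xi_i$) and $\sum_{x \in R_{n^2 t}}\tau_x - M_n = o(M_n)$ in probability (the classical statement that partial sums of i.i.d.\ variables with a slowly varying tail are asymptotic to their maximum, applied conditionally on $S$ with the random, $\asymp n$, number of summands $|R_{n^2 t}|$). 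Thus $\tfrac1n A_{n^2 t}/M_n \to \mathcal{L}_t \in (0,\infty)$, and writing
\[ \tfrac1n L\bigl(\tfrac1n A_{n^2 t}\bigr) = \frac{L\bigl(M_n \cdot (\tfrac1n A_{n^2 t}/M_n)\bigr)}{L(M_n)}\cdot \tfrac1n L(M_n), \]
the uniform convergence theorem for slowly varying functions — the multiplicative factor being, for any $\eta > 0$, confined to a fixed compact subinterval of $(0,\infty)$ with probability at least $1-\eta$ for all large $n$ — makes the first factor tend to $1$; hence $\tfrac1n L(\tfrac1n A_{n^2 t}) \to m^B_t$, and running this jointly over finitely many times yields convergence of all finite-dimensional distributions.

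Finally, to upgrade to the $M_1$-functional statement: each $t \mapsto \tfrac1n L(\tfrac1n A_{n^2 t})$ is non-decreasing and c\`{a}dl\`{a}g, as is $m^B$, and $m^B$ has no fixed discontinuities (its jumps occur when $B$ first hits the points of $\mathcal{P}$, which are almost surely distinct from any deterministic time). For non-decreasing functions the relevant $M_1$ oscillation modulus vanishes identically, so $M_1$-tightness reduces to tightness of the one-dimensional marginals (immediate from the previous step) together with the absence of a jump at the origin; combined with the finite-dimensional convergence, this gives \eqref{alim}. (The conclusion is in $M_1$ and not $J_1$ because the rescaled clock reaches each new record level through many small increments, spread over a time interval that need not be asymptotically negligible — it ramps up monotonically rather than jumping.) I expect the hard part to be the second paragraph — the passage from ``$\tfrac1n A_{n^2 t}$ is asymptotically the deepest-trap term'' to ``$\tfrac1n L(\tfrac1n A_{n^2 t}) \sim \tfrac1n L(M_n)$'' — since it genuinely requires the $n$-scale bound $\max_x \Xi_x = O(n)$ (any extra diverging factor would spoil the slow-variation step for the most slowly varying $L$), the sum-versus-maximum estimate with a random number of summands, and, crucially, the fact that $\mathcal{L}_t > 0$ almost surely, i.e.\ that the deepest visited trap typically lies in the interior of the range, so that the multiplicative correction is genuinely $\Theta(1)$ and therefore invisible to $\tfrac1n L(\cdot)$.
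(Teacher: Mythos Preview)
Your argument is correct and reaches the conclusion by a genuinely different route than the paper. The paper proceeds by a process-level squeeze: it first shows (via Lamperti/Kasahara composed with Donsker) that both the rescaled $S$-explored extremal process $\tfrac1n L(M^X_{n^2 t})$ and sum process $\tfrac1n L(\Sigma^X_{n^2 t})$ converge in $J_1$ to $m^B$, then traps $\tfrac1n L(\tfrac1n A_{n^2 t})$ between them up to an arbitrarily small error (the lower bound requiring a small random time-shift, because the local time at the record trap needs time to accumulate), and finally invokes a general squeeze lemma for $M_1$. You instead prove fixed-time convergence directly, pinning down the precise asymptotic $\tfrac1n A_{n^2 t}\sim M_n\mathcal L_t$ via the local-time invariance principle and argmax stability under point-process convergence, then killing the bounded factor $\mathcal L_t$ with the uniform convergence theorem; monotonicity alone upgrades this to $M_1$. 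Your route is sharper at the one-time level (you actually identify the limiting prefactor $\mathcal L_t$, which the paper's squeeze never sees) and lighter on the functional side, at the cost of relying on local-time convergence and argmax continuity, both of which the paper's squeeze sidesteps entirely. Conversely, the paper's argument is more modular --- the squeeze lemma is reusable --- and, crucially, delivers the joint $M_1/J_1$ convergence with $n^{-1}S_{n^2\cdot}$ that feeds into Theorem~\ref{thm:fltX}; your Skorohod coupling contains the ingredients for this, but you would still have to check joint tightness of the pair rather than $M_1$-tightness of the clock alone.
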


\begin{theorem}[Functional limit theorem for the BTM]
\label{thm:fltX} Under $\mathbb{P}_0$, as $n \to \infty$,
\[ \left( \frac{1}{n} X_{n L^{-1}(n t)}\right)_{t\geq0} \stackrel{L_1}{\Rightarrow}\left( B_{I^B_t} \right)_{t\geq0}\]
in distribution (simultaneously with (\ref{alim})), where $L^{-1}$ is the right-continuous inverse of $L$, i.e.\ $L^{-1}(x) := \inf \{  u : L(u) > x \}$. 
\end{theorem}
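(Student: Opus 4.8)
The strategy is to deduce Theorem~\ref{thm:fltX} from Theorem~\ref{thm:fltA} and Donsker's invariance principle, by passing through the time-change representation $X = S_{I^S}$ and applying the continuous-mapping theorem to the operations of inverting the clock and composing with the walk. The decisive structural feature is that $B_{I^B}$ appears only in the coarse $L_{1,\rm{loc}}$ topology: although $\tfrac1n S_{\lfloor n^2\cdot\rfloor}$ converges locally uniformly to $B$ and, by Theorem~\ref{thm:fltA}, $g_n(\cdot) := \tfrac1n L(\tfrac1n A_{n^2\cdot})$ converges in $M_1$ to the pure-jump extremal process $m^B$, the rescaled walk $\tfrac1n X_{nL^{-1}(n\cdot)}$ does \emph{not} converge in $M_1$ or $J_1$ --- during the asymptotically time-negligible excursions it makes away from the deepest-visited trap it performs diffusive oscillations that $M_1$ and $J_1$ record as violations --- and only local $L^1$-averaging smooths these out; so the weak topology is genuinely necessary rather than an artefact of the method.

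First I would establish the time-change identity. From the distributional equality $X = S_{I^S}$ we have $\tfrac1n X_{nL^{-1}(nt)} = \tfrac1n S_{I^S_{nL^{-1}(nt)}}$, and a manipulation of the generalized inverses $L, L^{-1}$ (using $u > L^{-1}(x) \Rightarrow L(u) > x \Rightarrow u \ge L^{-1}(x)$) together with the fact that the step process $A$ increases only by jumps identifies $\tfrac1{n^2} I^S_{nL^{-1}(n\cdot)}$, up to an error affecting only a Lebesgue-null set of times and hence invisible in $L_{1,\rm{loc}}$, with the right-continuous inverse $g_n^{-1}$ of the process $g_n$ from Theorem~\ref{thm:fltA}. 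Hence, up to negligible error, $\big(g_n,\, \tfrac1n X_{nL^{-1}(n\cdot)}\big) = \Phi\big(\tfrac1n S_{\lfloor n^2\cdot\rfloor},\, g_n\big)$ for the map $\Phi:(f,h) \mapsto (h,\, f\circ h^{-1})$.

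Next I would upgrade Theorem~\ref{thm:fltA} to the \emph{joint} convergence $\big(\tfrac1n S_{\lfloor n^2 t\rfloor},\, g_n(t)\big)_{t\ge0} \Rightarrow (B_t,\, m^B_t)_{t\ge0}$ in the product of the locally-uniform and $M_1$ topologies. This is not implied by the two marginal limits --- $m^B$ is a functional of the limiting Brownian motion --- but it is provided by (the proof of) Theorem~\ref{thm:fltA}, in which $m^B$ is necessarily built from the Brownian limit of $\tfrac1n S_{\lfloor n^2\cdot\rfloor}$. It then remains to verify that $\Phi$ is continuous, almost surely with respect to the law of $(B, m^B)$, from $(\text{loc.\ unif.}) \times M_1$ into $M_1 \times L_{1,\rm{loc}}$, and to invoke the continuous-mapping theorem; the limit is $\Phi(B, m^B) = (m^B,\, B\circ I^B) = (m^B_\cdot,\, B_{I^B_\cdot})$, which is the assertion together with \eqref{alim}. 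Inversion $h \mapsto h^{-1}$ is $M_1$-continuous on non-decreasing functions (it reflects the completed graph in the diagonal), so $g_n^{-1} \Rightarrow I^B$, and since a.s.\ $m^B$ is strictly increasing off its zero set, $g_n^{-1}(t) \to I^B_t$ for every $t$ outside the countable set of jump-levels of $m^B$, hence for a.e.\ $t$. For the composition: if $h_n \to h$ in $M_1$ with all $h_n$ non-decreasing then $h_n(t) \to h(t)$ at every continuity point of $h$, hence for a.e.\ $t$, with $\sup_n \sup_{[0,T]} h_n < \infty$; combined with $f_n \to f$ locally uniformly and $f$ continuous, this gives $f_n(h_n(t)) \to f(h(t))$ for a.e.\ $t \le T$ with the left side uniformly bounded, so bounded convergence yields $\int_0^T |f_n(h_n(t)) - f(h(t))|\,dt \to 0$, i.e.\ $f_n \circ h_n \to f\circ h$ in $L_{1,\rm{loc}}$; the first coordinate of $\Phi$ is just a projection.

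I expect the genuine difficulties to lie in two places. The first is making rigorous the negligibility inputs that the composition step rests on: one must show that the total Lebesgue measure of the times at which $\tfrac1n X_{nL^{-1}(n\cdot)}$ is far from $B_{I^B_\cdot}$ --- the exploration periods between successive records of the deepest-visited trap --- tends to $0$, the dynamical face of the fact (already implicit in Theorem~\ref{thm:fltA}) that the clock is asymptotically dominated by the deepest trap, and one must check that $\Phi$ is defined and continuous on a subset of Skorohod space carrying the full mass of $(B, m^B)$. The second is the generalized-inverse bookkeeping: quantifying uniformly in $n$ the discrepancy caused by the atoms of $A$ and by flat stretches of $L$, and confirming that it is $o(1)$ in $L_{1,\rm{loc}}$. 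As the diffusive-oscillation phenomenon makes clear, no strengthening to $M_1$ or $J_1$ for $X$ is possible, so these obstacles are intrinsic.
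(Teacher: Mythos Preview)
Your proposal is correct and follows essentially the same route as the paper: establish the \emph{joint} convergence of the rescaled clock and rescaled walk (this is Proposition~\ref{prop:fltA2}, which strengthens Theorem~\ref{thm:fltA} to $M_1/J_1$ convergence of the pair $(n^{-1}L(n^{-1}A_{n^2\cdot}),\, n^{-1}S_{n^2\cdot}) \Rightarrow (m^B, B)$), invert the first coordinate using the $M_1$-continuity of right-continuous inverses (Lemma~\ref{lem:inv}), and then compose with the second coordinate, where composition with a uniformly convergent continuous outer function yields only $L_{1,\rm{loc}}$ convergence when the inner function converges in $M_1$ (Lemma~\ref{lem:comp}). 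Your bounded-convergence argument for the composition step is essentially the proof of that lemma. One simplification: the identity $g_n^{-1}(t) = n^{-2} I^S_{nL^{-1}(nt)}$ holds exactly from the chain of equivalences between the generalised inverses (the paper simply asserts it), so the ``generalized-inverse bookkeeping'' difficulty you anticipate is not in fact present.
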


\begin{remark}
\label{rem:topology}
Let us explain why the $M_1$ and $L_{1,\rm{loc}}$ topologies are the appropriate topologies for the convergence results in Theorems \ref{thm:fltA} and \ref{thm:fltX} respectively. Recall that the $M_1$ topology extends the usual $J_1$ topology by allowing jumps in the limit process to be matched by multiple jumps of lesser magnitude in the limiting processes, as long as they are essentially monotone and occur in negligible time in the limit. With regards to Theorem~\ref{thm:fltA}, the need for the $M_1$ topology arises because the total amount of time that the BTM spends at the deepest-visited trap is a result of multiple visits to the trap, all of which can contribute in a non-negligible way to the jump in the limit clock process. Convergence in the stronger $J_1$ topology would only hold if only the first visit to the trap made a non-negligible contribution in the limit; this is not true in general.

Recall also that the non-Skorohod $L_{1,\rm{loc}}$ topology extends both the $J_1$ and $M_1$ topologies by allowing excursions in the limiting processes that are not present in the limit process, as long as they are of negligible magnitude in the $L_1$ sense (which, in particular, is the case if they are of bounded size and occur in negligible time in the limit). In regards to Theorem~\ref{thm:fltX}, the need for the $L_{1,\rm{loc}}$ topology arises because, during the time that the BTM is based at the deepest-visited trap, the BTM makes repeated excursions away from this site. Although these occur in negligible time in the limit, they are of a magnitude comparable to the distance scale, and so prevent convergence in the stronger Skorohod topologies. See \cite{Fontes13,Mathieu14} for other examples of trap model convergence results that make use of the $L_{1,\rm{loc}}$ topology (or close variants). We remark that the convergence in the $L_{1,\rm{loc}}$ topology is too weak to imply the convergence of some commonly used functionals of the sample paths of $X$, including $\inf_{t \in [0, T]}(X_t)$ and $\sup_{t \in [0, T]}(X_t)$.

Finally, we believe that the convergence in the $L_{1, \rm{loc}}$ topology in Theorem~\ref{thm:fltX} can actually be mildly strengthened to convergence in a topology that allows for zero-time excursions of bounded size in the limiting processes, but only if they occur at jump-times of the limit process (cf.\ the space $E$ in \cite[Section 15.4]{Whitt02}). That we expect convergence to hold in this stronger topology is essentially due to the highly non-linear rescaling of time in the limit; since this is an artefact of the rescaling rather than an intrinsic property of the processes, we choose not to pursue the additional technical complications necessary to prove such a result here.
\end{remark}

We now state an additional assumption on the tail of the probability distribution $\tau_0$, under which we are able to derive the convergence of the clock process under the stronger $J_1$ topology, and also simplify the statements of Theorems \ref{thm:fltA} and \ref{thm:fltX}.

\begin{assumption}
\label{assumpt}
Assume that
\[ \lim_{x \to \infty} \frac{L(x / L(x))}{L(x)} = 1.\]
\end{assumption}

\begin{remark}
Note that Assumption \ref{assumpt} is satisfied for $L(x) = (\log x)^\gamma$ for all $\gamma > 0$ (i.e.\ the class of \textit{log-Pareto} distributions), but is only satisfied for $L(x) = \exp \{ (\log x)^\gamma \}$ (i.e.\ the class of \textit{log-Weibull} distributions) for the parameter range $0 < \gamma < 1/2$; cf.\ \cite[Remark 2.4]{Kasahara86}, where it is noted that the same parameter range allows for an analogous simplification of the functional limit theorem that we state as Proposition~\ref{prop:j1conv}. We additionally note that Assumption \ref{assumpt} implies the slow-variation property \eqref{eq:slow}.
\end{remark}

\begin{theorem}[Simplified functional limit theorem for the clock process]
\label{thm:assumptA}
Suppose Assumption \ref{assumpt} holds. Under $\mathbb{P}_0$, as $n \to \infty$,
\begin{equation}\label{alim2}
 \left( \frac{1}{n} L(A_{n^2 t})\right)_{t\geq0} \stackrel{J_1}{\Rightarrow} \left(m^B_t\right)_{t\geq 0}
\end{equation}
in distribution.
\end{theorem}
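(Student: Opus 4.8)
The plan is to deduce Theorem~\ref{thm:assumptA} from Theorem~\ref{thm:fltA}, with Assumption~\ref{assumpt} entering only through a single slow-variation estimate, which plays two roles: it lets us remove the factor $1/n$ from inside $L$ without changing the limit, and it lets us strengthen the topology from $M_1$ to $J_1$. I would first isolate this estimate: Assumption~\ref{assumpt} implies $L(uL(u)^{k})/L(u) \to 1$ as $u \to \infty$ for each fixed $k \ge 1$. To see the case $k=1$, write the hypothesis as $L(u/L(u)) \sim L(u)$ and, for given $v$, pick $u$ with $u/L(u) = v$; then $L(v) \sim L(u)$, hence $u = vL(u) \sim vL(v)$, and therefore $L(vL(v)) \sim L(u) \sim L(v)$. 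The general $k$ follows by iteration, and since $L$ is non-decreasing this self-improves to
\[ \lim_{u\to\infty}\ \sup_{c\,\in\,[\,k^{-1},\ L(u)^{k}\,]}\ \left| \frac{L(cu)}{L(u)} - 1 \right| = 0, \qquad k \ge 1. \]
An estimate of essentially this form already appears in \cite{Kasahara86}.

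The second step is to show that, under $\mathbb{P}_0$, $\sup_{t \le T}\bigl| \tfrac1n L(A_{n^2 t}) - \tfrac1n L(\tfrac1n A_{n^2t}) \bigr| \to 0$ in probability for every $T > 0$; since uniform-on-compacts closeness implies closeness in $M_1$, Theorem~\ref{thm:fltA} then yields $\bigl(\tfrac1n L(A_{n^2 t})\bigr)_{t\ge0} \stackrel{M_1}{\Rightarrow} \bigl(m^B_t\bigr)_{t\ge0}$. Writing $u_n(t) := \tfrac1n A_{n^2 t}$ and $\tfrac1n L(A_{n^2t}) = \tfrac1n L\bigl(n\, u_n(t)\bigr)$, the display above reduces the claim to a bound of the shape $n \le L(u_n(t))^{k}$, valid with high probability uniformly over $t \in [\varepsilon,T]$ --- the contribution of $t < \varepsilon$ being negligible for $\varepsilon$ small, since there $\tfrac1n L(A_{n^2t})$ and the limit $m^B_t$ are both uniformly small. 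The required bound for $t \ge \varepsilon$ follows from the extreme-value and local-time estimates that already underpin Theorem~\ref{thm:fltA}: with high probability, uniformly in $t$, the clock $A_{n^2 t}$ is comparable --- up to the $\xi$-weighted local time, which is $\Theta(n)$, at the deepest site --- to the contribution $\tau^*_{n^2 t} := \max_{i\le n^2 t}\tau_{S_i}$ of the deepest-visited trap, while $\tau^*_{n^2 t} \ge L^{-1}(\delta n)$, so that $L(u_n(t))$ is at least a constant multiple of $n$. As $\tfrac1n L(u_n(t))$ is uniformly bounded in probability on compacts --- it converges in law to $m^B$ by Theorem~\ref{thm:fltA} --- the factor $1 + o(1)$ produced by the display is harmless.

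To upgrade the convergence to $J_1$ it now suffices to prove that $\bigl(\tfrac1n L(A_{n^2 t})\bigr)_{t\ge0}$ is tight in the $J_1$ topology, since a sequence that converges in $M_1$ and is tight in $J_1$ converges in $J_1$ to the same limit. By the previous step $\tfrac1n L(A_{n^2 t})$ is uniformly close to the non-decreasing pure-jump process $\tfrac1n L(\tau^*_{n^2 t})$, whose jumps occur exactly when $S$ first reaches a trap strictly deeper than all previously visited ones; so by the standard $J_1$-tightness criterion it is enough to show that, with probability tending to one as $n \to \infty$ and then $\delta \to 0$, no two jumps of $\tfrac1n L(\tau^*_{n^2 t})$ of size $\ge \eta$ occur within rescaled time $\delta$ of one another in $[0,T]$. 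This rests on two facts: (a) each new record trap produces a single clean jump rather than a spread-out sequence of sub-jumps --- and this is precisely where Assumption~\ref{assumpt} is indispensable, since, applying the uniform estimate with $c$ running over the possible $\xi$-weighted visit counts (which lie between a positive constant and a constant multiple of $n$, hence inside $[\,k^{-1},\,L(\tau^*_{n^2t})^{k}\,]$), the rescaled clock already attains the new record level at the very first visit, up to an error $o(1)$; and (b) two distinct record traps are never discovered in quick succession, which follows from Donsker's theorem for $S$ jointly with its running extrema, convergence of the rescaled trapping landscape to $\mathcal{P}$, and the almost-sure non-clustering of the $\eta$-sized jumps of $m^B$.

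I expect fact (b) --- the quantitative, uniform-in-$n$ control of how the walk discovers successive record traps, transferring the non-clustering of the jumps of the limit process $m^B$ back to the discrete model --- to be the principal obstacle. Fact (a), which is conceptually the reason the simplified statement holds in the stronger topology, is essentially a single application of the slow-variation estimate above, and the remaining reduction steps reuse machinery developed in the proof of Theorem~\ref{thm:fltA}.
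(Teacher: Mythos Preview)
Your approach is essentially correct but takes a more circuitous route than the paper. The paper does \emph{not} deduce Theorem~\ref{thm:assumptA} from Theorem~\ref{thm:fltA}; the two are proved in parallel (Proposition~\ref{prop:fltA2}). For the $J_1$ statement, the paper simply shows that under Assumption~\ref{assumpt} one has
\[
\sup_{t\in[\delta,T]}\left|\tfrac1n L(A_{n^2t}) - \tfrac1n L(\Sigma^X_{n^2t})\right|\to 0
\]
in probability (Proposition~\ref{prop:assumptsqueeze}), using the sandwich $\nu_{\min}(n^2t)\,\Sigma^X_{n^2t}\le A_{n^2t}\le \nu_{\max}(n^2t)\,\Sigma^X_{n^2t}$ and Assumption~\ref{assumpt} in the form of \eqref{eq:decay2}. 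Since $\tfrac1n L(\Sigma^X_{n^2t})\stackrel{J_1}{\Rightarrow} m^B_t$ is already known (Proposition~\ref{prop:j1convX}), a one-line application of Lemma~\ref{lem:sconv1} finishes the proof.

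Your argument implicitly contains the same core estimate --- your ``comparability'' of $A_{n^2t}$ with (local time)$\times\tau^*_{n^2t}$, combined with your uniform slow-variation bound, amounts exactly to showing $\tfrac1n L(A_{n^2t})$ is uniformly close to $\tfrac1n L(M^X_{n^2t})$ on $[\delta,T]$. Once you have that, the $J_1$ convergence of $\tfrac1n L(M^X_{n^2t})$ (again Proposition~\ref{prop:j1convX}) gives the result directly, with no need to pass through $M_1$ first and then argue $J_1$-tightness separately. In particular, your ``fact (b)'' (non-clustering of record discoveries) is not the principal obstacle: it is already packaged inside the $J_1$ convergence of the explored extremal process, which in the paper comes from Lamperti's classical result via Proposition~\ref{prop:j1conv} and a time-change. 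What you identify as the heart of the matter --- your ``fact (a)'', that under Assumption~\ref{assumpt} the first visit to a record trap already realises the full jump --- is indeed the essential new ingredient, and it is precisely what the paper encodes in the choice of $h_n$ satisfying \eqref{eq:decay2}. So your instincts about where Assumption~\ref{assumpt} enters are right; you just don't need the intermediate $M_1$ step or the hand-rolled tightness argument.
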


\begin{theorem}[Simplified functional limit theorem for the BTM]
\label{thm:assumptX} Suppose Assumption \ref{assumpt} holds. Under $\mathbb{P}_0$, as $n \to \infty$,
\[ \left(\frac{1}{n} X_{L^{-1}(n t)}\right)_{t\geq0}   \stackrel{L_1}{\Rightarrow}\left( B_{I^B_t}\right)_{t\geq0} \]
in distribution (simultaneously with (\ref{alim2})).
\end{theorem}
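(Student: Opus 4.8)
The plan is to run the argument of Theorem~\ref{thm:fltX} with Theorem~\ref{thm:assumptA} in place of Theorem~\ref{thm:fltA}, so let me describe the steps. The backbone is the subordination identity recalled in Section~\ref{subsec:scalinglimit}, namely that under $\mathbb{P}_0$ the process $X$ equals $S_{I^S}$ in law, together with Donsker's invariance principle for $S$. Writing $\widetilde{S}^{(n)}_u := \tfrac1n S_{\floor{n^2 u}}$ and $\widetilde{A}^{(n)}_u := \tfrac1n L(A_{n^2 u})$, I would first establish the \emph{joint} convergence $(\widetilde{S}^{(n)}, \widetilde{A}^{(n)}) \Rightarrow (B, m^B)$ under $\mathbb{P}_0$, in the product of the locally uniform topology on the first coordinate and the $J_1$ topology on the second. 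This should come out of the proof of Theorem~\ref{thm:assumptA} essentially for free, since $m^B$ is by construction a measurable functional of $(B, \mathcal{P})$ and the coupling of $(S, \tau)$ to the limiting objects is already present there; granting it, I would pass via the Skorohod representation theorem to a probability space on which both convergences hold almost surely.

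Next I would rewrite the target in terms of these rescaled processes. Unwinding definitions, $\tfrac1n X_{L^{-1}(nt)}$ coincides with $\widetilde{S}^{(n)}\!\bigl((\widetilde{A}^{(n)})^{-1}_t\bigr)$, where $(\,\cdot\,)^{-1}$ denotes the right-continuous inverse, up to two sources of discrepancy: the floor operations implicit in $A$ and $S$, and the gap between $L^{-1}(nt)$ and the right-continuous inverse, evaluated at $t$, of $u \mapsto \tfrac1n L(u)$. Both are asymptotically negligible, using only that $L$ is non-decreasing and slowly varying. It is at this point that Assumption~\ref{assumpt} is used: together with the rapid variation of $L^{-1}$ it ensures that the simplified rescaling $L^{-1}(nt)$ --- rather than the $nL^{-1}(nt)$ of Theorem~\ref{thm:fltX} --- still produces the limiting time-change $I^B$. (One could instead try to deduce Theorem~\ref{thm:assumptX} directly from Theorem~\ref{thm:fltX} by checking that the two rescalings are asymptotically equivalent under Assumption~\ref{assumpt}; this reduces to the same analytic fact about $L$.)

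It then remains to show that the map $(f, g) \mapsto f \circ g^{-1}$ is continuous at $(B, m^B)$ --- almost surely with respect to the law of $(B, \mathcal{P})$ --- as a map from the product of the locally uniform topology and $J_1$ into the $L_{1,\mathrm{loc}}$ topology; by the continuous mapping theorem this yields the claimed convergence, jointly with \eqref{alim2} since the clock coordinate is then an image of the same converging pair. Here I would exploit the almost-sure regularity of the limit: $B$ has no interval of constancy, while $m^B$ is a pure-jump increasing extremal process whose right-continuous inverse $I^B$ is itself pure-jump. These facts give $(\widetilde{A}^{(n)})^{-1}_t \to I^B_t$ at every continuity point of $I^B$, and hence, composing with $\widetilde{S}^{(n)} \to B$ locally uniformly, $\widetilde{S}^{(n)}\!\bigl((\widetilde{A}^{(n)})^{-1}_t\bigr) \to B_{I^B_t}$ at every continuity point of $I^B$. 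Upgrading this pointwise statement to $L_{1,\mathrm{loc}}$ convergence requires controlling the spurious fluctuations of the prelimit --- the excursions the rescaled walk makes away from the deepest-visited trap during its long sojourns there, and the macroscopic pieces of Brownian path traced out over the short $t$-intervals on which $I^B$ jumps --- and showing they carry vanishing $L_1$-mass over compact time intervals, even though each is of order one in magnitude. That these fluctuations are $O(1)$ yet $L_1$-negligible is precisely why $J_1$ or $M_1$ convergence cannot hold (cf.\ Remark~\ref{rem:topology}).

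The hard part will be this last estimate: a bound, uniform over compact time intervals, on $\int_0^T \bigl|\widetilde{S}^{(n)}\!\bigl((\widetilde{A}^{(n)})^{-1}_t\bigr) - B_{I^B_t}\bigr|\,dt$, which I would reduce to controlling the Lebesgue measure of the set of times $t \le T$ at which the rescaled walk sits away from the deepest-visited trap so far. I expect this to follow from the clock convergence together with the structure of $m^B$: by slow variation the clock is essentially frozen at the previous record depth between successive records, so the walk revisits the current deep trap often enough to pin the time-change, and the $t$-measure of the interspersed excursions is governed by the vanishingly small clock increments accrued away from that trap. Making this precise, together with the routine-but-fiddly discretisation reductions of the second step, will be the bulk of the work; what remains is bookkeeping with the $L_{1,\mathrm{loc}}$ topology, for which the characterisations in Appendix~\ref{sec:appendix} are convenient.
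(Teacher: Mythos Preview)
Your overall strategy matches the paper's: establish the joint convergence of the rescaled clock and the rescaled walk (this is exactly Proposition~\ref{prop:fltA2}, so you are right that it comes ``for free'' from the proof of Theorem~\ref{thm:assumptA}), then invert and compose. The differences are in how you execute the inversion/composition step, and here you are making life harder than necessary.

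First, the identity you want is exact, not asymptotic: the right-continuous inverse of $u\mapsto \tfrac1n L(A_{n^2u})$ evaluated at $t$ is \emph{precisely} $n^{-2}I^S_{L^{-1}(nt)}$, by the definition of $L^{-1}$. No discretisation or slow-variation fiddling is needed at this stage, and Assumption~\ref{assumpt} plays no role here --- it was already spent in obtaining the $J_1$ clock convergence.

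Second, and more importantly, the ``hard part'' you anticipate is not hard. The paper factors the map $(f,g)\mapsto f\circ g^{-1}$ into two pieces, each handled by a short general lemma in the appendix: (i) inversion is continuous from $M_1$ to $M_1$ on nondecreasing functions (Lemma~\ref{lem:inv}, quoting Whitt), so $(\widetilde A^{(n)})^{-1}\to I^B$ in $M_1$; (ii) if $z^n\to z$ uniformly on compacts with $z$ continuous and $x^n\to x$ in $M_1$, then $z^n\circ x^n\to z\circ x$ in $L_{1,\rm loc}$ (Lemma~\ref{lem:comp}). The proof of (ii) is a four-line dominated-convergence argument: split $|z^n_{x^n_t}-z_{x_t}|\le |z^n_{x^n_t}-z_{x^n_t}|+|z_{x^n_t}-z_{x_t}|$, bound the first term by uniform convergence, and bound the second using only the continuity of $z$ and the fact that $M_1$ convergence implies $L_{1,\rm loc}$ convergence of $x^n$. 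No direct control of excursion times, record structure, or clock increments away from the deep trap is needed --- those heuristics explain \emph{why} only $L_{1,\rm loc}$ is available, but the actual estimate bypasses them entirely.

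So your plan would work, but you should replace the anticipated excursion-measure argument by the two modular lemmas above; this turns the ``bulk of the work'' into a few lines.
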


\begin{remark} That Assumption \ref{assumpt} ensures that the convergence of the clock process in Theorem~\ref{thm:assumptA} holds in the stronger $J_1$ topology results from the fact that, under this assumption, only the first visit to the deepest-visited trap makes a non-negligible contribution to the clock process in the scaling limit; c.f.\ Remark~\ref{rem:topology}.
\end{remark}

As described in Section \ref{subsec:scalinglimit}, the scaling limit process $B_{I^B}$ in Theorems \ref{thm:fltX} and \ref{thm:assumptX} can be considered as the extremal FIN process, in the sense that the FIN diffusion with parameter $\alpha$ converges, under suitable rescaling,
to the process $B_{I^B}$ as the parameter $\alpha \to 0$. Here we make this notion precise. First, let us define the FIN diffusion (see \cite{BenArous06}). Recall the definition of $\mathcal{P} = (x_i, v_i)_{i \in \mathbb{N}}$ and $B$ from Section \ref{subsec:scalinglimit}. For $\alpha \in (0, 1)$, let $\mathcal{P}^\alpha$ be the point process on $\mathbb{R} \times \mathbb{R}^+$ defined by the point set $(x_i,w_i):=(x_i, v_i^{1/\alpha})$, ${i \in \mathbb{N}}$. By simple change of variables, it is easy to see that $\mathcal{P}^\alpha$ is Poissonian with intensity measure $\alpha w^{-1 - \alpha} dx \, dw$. Denote by $(L_t(x))_{t\geq0,x\in\mathbb{R}}$ the local time process of $B$. Defining $m^{B, \alpha} = (m_t^{B, \alpha})_{t \ge 0}$ by
\begin{equation}\label{mtadef}
 m_t^{B, \alpha} := \sum_i L_t(x_i) v^{1/\alpha}_i
 \end{equation}
with $I^{B, \alpha} = (I^{B, \alpha}_t)_{t \ge 0}$ its right-continuous inverse, the FIN diffusion with parameter $\alpha$ is the process $B_{I^{B, \alpha}} = (B_{I^{B, \alpha}_t})_{t \ge 0} $. Note that in this definition we have built in a coupling, via the point process $\mathcal{P}$ and the Brownian motion $B$, of the FIN diffusion $B_{I^{B, \alpha}}$ and the extremal FIN process $B_{I^B}$. This allows us to state our convergence as an almost sure result (cf.\ the $\alpha\to1^-$ limit of Remark~\ref{alpha1rem}).

\begin{theorem}[Convergence of the FIN diffusion to the extremal FIN process]
\label{thm:alpha0}
As $\alpha \to 0$,
\[   \left(B_{ I^{B, \alpha}_{t^{1/\alpha}}} \right)_{t \ge 0} \stackrel{L_1 }{\rightarrow} \left( B_{I^B_t} \right)_{t\geq0},\]
where $\stackrel{L_1 }{\rightarrow}$ denotes convergence in the $L_{1, \rm{loc}}$ topology, almost-surely with respect to the joint law of $\mathcal{P}$ and $B$.
\end{theorem}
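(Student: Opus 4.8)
The plan is to argue on a fixed realisation of $(\mathcal{P},B)$ lying in an event of full probability, on which we may assume: $\mathcal{P}$ is simple, with no two points at equal height and none on the line $x=0$; $B$ is continuous, with jointly continuous local time $(L_t(x))$ and range equal to all of $\mathbb{R}$; and, fixing once and for all some $\alpha_0\in(0,1)$ (say $\alpha_0=1/2$), the associated FIN clock satisfies $m^{B,\alpha_0}_s\in(0,\infty)$ for every $s>0$ --- the latter being part of the (standard) statement that the FIN diffusion with parameter $\alpha_0$ is well defined. Writing $\sigma_\alpha(t):=I^{B,\alpha}_{t^{1/\alpha}}$ and $\sigma(t):=I^B_t$, and using that $x\mapsto x^\alpha$ is increasing on $(0,\infty)$, one has for $t>0$
\[ \sigma_\alpha(t)=\inf\bigl\{s\ge 0:\ (m^{B,\alpha}_s)^{\alpha}>t\bigr\}, \qquad \sigma(t)=\inf\{s\ge 0:\ m^B_s>t\}.\]
The three steps are then: (i) show $(m^{B,\alpha}_s)^{\alpha}\to m^B_s$ as $\alpha\to 0$ for Lebesgue-almost every $s$; (ii) upgrade this, using monotonicity in $s$ of the functions $(m^{B,\alpha}_\cdot)^{\alpha}$ and $m^B_\cdot$, to $\sigma_\alpha(t)\to\sigma(t)$ at every continuity point $t$ of $\sigma$, hence for a.e.\ $t$; (iii) transfer this to the processes by continuity of $B$ together with dominated convergence, noting that $L_1$-convergence over every compact time interval is precisely convergence in the $L_{1,\mathrm{loc}}$ topology.

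For step (i), fix $s>0$, set $M:=m^B_s\in(0,\infty)$, and let $i^*=i^*(s)$ be the a.s.\ unique index attaining the supremum in \eqref{mtdef}, so that $v_{i^*}=M$ and $v_i\le M$ for every $i$ with $L_s(x_i)>0$. Retaining only the $i^*$th term of \eqref{mtadef} gives $m^{B,\alpha}_s\ge L_s(x_{i^*})M^{1/\alpha}$, while for $\alpha\le\alpha_0$ the bound $v_i^{1/\alpha}\le M^{1/\alpha-1/\alpha_0}v_i^{1/\alpha_0}$ (valid since $0<v_i\le M$) yields
\[ m^{B,\alpha}_s\ \le\ M^{1/\alpha-1/\alpha_0}\sum_i L_s(x_i)v_i^{1/\alpha_0}\ =\ M^{1/\alpha-1/\alpha_0}\,m^{B,\alpha_0}_s.\]
Raising to the power $\alpha$, we obtain $L_s(x_{i^*})^{\alpha}M\le (m^{B,\alpha}_s)^{\alpha}\le M^{1-\alpha/\alpha_0}(m^{B,\alpha_0}_s)^{\alpha}$. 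Since $s\mapsto m^B_s$ is monotone and right-continuous, it is continuous off a countable set; at any such continuity point $s>0$, the point $x_{i^*(s)}$ is necessarily hit by $B$ strictly before time $s$ --- otherwise it would be a freshly attained running extremum and, as $v_{i^*(s)}=M$ strictly exceeds the depth of every trap visited before $s$, the process $m^B$ would jump at $s$ --- and hence $L_s(x_{i^*(s)})>0$, because Brownian local time at a level is strictly positive at every time strictly after that level is first attained. As $m^{B,\alpha_0}_s\in(0,\infty)$ as well, both sides of the displayed two-sided bound converge to $M$ as $\alpha\to 0$, so $(m^{B,\alpha}_s)^{\alpha}\to m^B_s$ at every continuity point $s>0$ of $m^B$, and in particular for a.e.\ $s$.

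For step (ii), let $t>0$ be a continuity point of the non-decreasing function $\sigma$ (recall $\sigma(t)>0$, since $m^B_{0+}=0<t$). Given $\varepsilon\in(0,\sigma(t))$: monotonicity of $m^B$ gives $m^B_{s'}>t$ for all $s'>\sigma(t)$, so picking a continuity point $s'\in(\sigma(t),\sigma(t)+\varepsilon)$ of $m^B$ and invoking step (i) gives $(m^{B,\alpha}_{s'})^{\alpha}>t$ for all small $\alpha$, whence $\sigma_\alpha(t)\le s'<\sigma(t)+\varepsilon$; while continuity of $\sigma$ at $t$ forces $m^B_s<t$ for all $s<\sigma(t)$, so picking a continuity point $s''\in(\sigma(t)-\varepsilon,\sigma(t))$ of $m^B$ and invoking step (i) gives $(m^{B,\alpha}_{s''})^{\alpha}<t$ for all small $\alpha$, which by monotonicity in $s$ forces $(m^{B,\alpha}_s)^{\alpha}<t$ for all $s\le s''$ and hence $\sigma_\alpha(t)\ge s''>\sigma(t)-\varepsilon$. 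Letting $\varepsilon\downarrow 0$ shows $\sigma_\alpha(t)\to\sigma(t)$; as $\sigma$ is monotone this holds for a.e.\ $t$ (the case $t=0$ being trivial). For step (iii), continuity of $B$ now gives $B_{\sigma_\alpha(t)}\to B_{\sigma(t)}$ for a.e.\ $t\ge 0$. Fixing $T>0$ and a continuity point $T'>T$ of $\sigma$, step (ii) yields $\alpha_1>0$ with $C:=\sup_{\alpha\le\alpha_1}\sigma_\alpha(T')<\infty$, and monotonicity of $t\mapsto\sigma_\alpha(t)$ gives $\sup\{\sigma_\alpha(t):\ t\le T,\ \alpha\le\alpha_1\}\le C$; thus $\{B_{\sigma_\alpha(t)}:\ t\le T,\ \alpha\le\alpha_1\}$ is bounded by $\sup_{u\le C}|B_u|<\infty$, and dominated convergence gives $\int_0^T|B_{\sigma_\alpha(t)}-B_{\sigma(t)}|\,dt\to 0$. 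Since $T$ was arbitrary, this is the asserted convergence of $(B_{I^{B,\alpha}_{t^{1/\alpha}}})_{t\ge0}$ to $(B_{I^B_t})_{t\ge0}$ in $L_{1,\mathrm{loc}}$ on the chosen full-probability event.

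The main obstacle is the upper bound in step (i): even though the sum in \eqref{mtadef} has infinitely many non-zero terms (the traps are dense on the line), after taking the $\alpha$th power the rescaled FIN clock is still dominated by its single largest term, the essential input being the a.s.\ finiteness of the single fixed FIN clock $m^{B,\alpha_0}$. The remaining ingredients --- the local-time positivity at continuity points of $m^B$, and the passage from convergence of the monotone functions $(m^{B,\alpha}_\cdot)^{\alpha}$ on a dense set to convergence of their right-continuous inverses and of the associated time changes --- are routine but require the care exhibited above; this is the same mechanism, in the present almost-sure setting, that drives the proof of Theorem~\ref{thm:fltX}.
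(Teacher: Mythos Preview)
Your proof is correct and follows the same underlying strategy as the paper: establish that the rescaled FIN clock $(m^{B,\alpha}_s)^\alpha$ converges to the extremal clock $m^B_s$ via a sum-to-max argument, then invert and compose with $B$. The paper packages these steps into a separate sum-to-max lemma (Lemma~\ref{lem:sum-to-max}), proves $M_1$ convergence of the clocks (Proposition~\ref{prop:alpha0clock}), and then invokes the general inversion and composition results of Lemmas~\ref{lem:inv} and~\ref{lem:comp} from the appendix. You instead carry out each of these steps by hand: your two-sided bound in step~(i) is precisely an inline proof of the sum-to-max fact (with the single reference clock $m^{B,\alpha_0}$ playing the role of the finite $L^{1/\alpha_0}$ norm), and your steps~(ii)--(iii) reprove the inversion and composition directly at the level of pointwise convergence plus dominated convergence, bypassing the $M_1$ framework entirely. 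The paper's route is cleaner in that it reuses machinery already built for Theorems~\ref{thm:fltX} and~\ref{thm:assumptX}; your route is slightly more self-contained and makes explicit the one nontrivial point --- that $L_s(x_{i^*(s)})>0$ at continuity points of $m^B$ --- which in the paper is handled for each fixed $t$ via the local-time characterisation of the Brownian range.
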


Finally, we establish scaling limits for the $\beta$-transparent BTM. Write $P_x^{\tau, \beta}$ for the quenched law of $X^\beta$ started from $x$, and let
\[\mathbb{P}^\beta_x\left(\cdot\right)=\int {P}^{\tau, \beta}_x\left(\cdot\right)d\PP\]
be the corresponding annealed law. To state our results, we will need to introduce the notion of second-order slow-variation. We say that $L$ is \textit{second-order slowly varying} if there exist functions $g, k$ such that $g(u) \to 0$ as $u \to \infty$ and
\begin{align}\label{sosv}
\lim_{u \to \infty} \frac{\frac{L(u v)}{L(u)} - 1}{ g(u)}  =  k(v), \quad \text{for any } v > 0,
\end{align}
where there exists a $v$ such that $k(v)\neq 0$ and $k(uv)\neq k(u)$ for all $u>0$. Second-order slow-variation is a natural strengthening of the slow-variation property \eqref{eq:slow}, giving more precise information about the fluctuations of $L$ at infinity; see \cite[Chapter 3]{Bingham87} for an overview.

\begin{theorem}[Functional limit theorem for the $\beta$-transparent BTM]
\label{thm:tt}
Suppose $\beta \ge 1$. Then, as $n \to \infty$,
\[ \left( \frac{1}{n} X^\beta_{\mu n^2 t} \right)_{t \ge 0} \stackrel{J_1}{\Rightarrow} (B_t)_{t \ge 0} \]
in $P_x^{\tau, \beta}$-distribution, $\PP$-almost-surely, where
\[ \mu := \mathbf{E} \left( \max\{1 - \tau_0^{- \beta}, 0\}  + \tau_0 \min\{\tau_0^{- \beta}, 1 \} \right) < \infty\]
is the annealed expected holding time. Suppose $\beta \in (0, 1)$ and assume that $L$ is second-order slowly varying, i.e.\ that it satisfies (\ref{sosv}). Then there exists a slowly varying function $\ell$ such that, as $n \to \infty$,
\[ \left( \frac{1}{n} X^\beta_{ n^{2/\beta} \ell(n) t} \right)_{t \ge 0} \stackrel{J_1}{\Rightarrow} (FK^\beta_t)_{t \ge 0} \]
in $P_x^{\tau, \beta}$-distribution and $\PP$-probability,
where $FK^\beta$ denotes the FK process with index $\beta$.
\end{theorem}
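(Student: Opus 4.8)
The plan is to reduce both cases to the time-change representation $X^\beta_t = S_{I^S_t}$, where $A^\beta_n = \sum_{i\le \floor n} h_i$ records the accumulated holding time, $I^S$ is its right-continuous inverse, and $S$ is an independent simple random walk; conditionally on $S$ and $\tau$ the holding times $h_i$ are independent with $\mathbf E[h_i\mid S,\tau]=g(\tau_{S_i})$, where $g(u)=\max\{1-u^{-\beta},0\}+u\min\{u^{-\beta},1\}$. Everything then follows from an appropriate limit theorem for $A^\beta$, Donsker's theorem for $S$, and a continuity-of-composition argument in Skorohod space.

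\emph{The case $\beta\ge 1$.} Here $g$ is bounded (indeed $g\le 2$), so $\mathbf E[h_i]=\mathbf E[g(\tau_0)]=\mu<\infty$, and the target is the quenched functional law of large numbers: $n^{-2}A^\beta_{n^2\,\cdot}\to\mu\,\mathrm{id}$ uniformly on compacts, in $P^{\tau,\beta}_x$-probability for $\PP$-a.e.\ $\tau$. I would prove this by truncating the holding times at a level $M$: the bound $\mathbb P_0(h_1>s)\asymp s^{-\beta}$ up to a slowly varying factor gives $\mathbf E[(h_1-M)^+]\to 0$ as $M\to\infty$, which controls the over-truncation error, while for the truncated clock a second-moment estimate suffices, using that its conditional mean $\sum_{i\le n}g(\tau_{S_i})$ is an additive functional of $S$ in a bounded scenery and that the self-intersection local time $\sum_x \ell_n(x)^2$ of simple random walk on $\ZZ$ is of order $n^{3/2}$. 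To upgrade convergence in $P^\tau$-probability to the almost-sure-in-$\tau$ statement I would apply Borel--Cantelli along a subsequence $n_k$ chosen to grow slowly enough that $n_{k+1}/n_k\to1$ --- so that monotonicity of $A^\beta$ in the step index permits interpolation --- yet fast enough that the relevant error probabilities are summable; in the delicate boundary case $\beta=1$, where this law of large numbers holds only with a rate of order $(\log n)^{-2}$, the argument rests on the fact that deep traps are (summably) never \emph{activated} before time $n^2$. Given the clock convergence, inverting yields $n^{-2}I^S_{\mu n^2\,\cdot}\to\mathrm{id}$ and hence $n^{-1}X^\beta_{\mu n^2\,\cdot}=n^{-1}S_{I^S_{\mu n^2\,\cdot}}\stackrel{J_1}{\Rightarrow}B$.

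\emph{The case $\beta\in(0,1)$.} Now the holding-time law has infinite mean, and the second-order slow-variation assumption on $L$ guarantees that $\mathbb P_0(h_1>s)$ is regularly varying of index $-\beta$, with norming sequence $b_n=n^{1/\beta}\ell_0(n)$ for some slowly varying $\ell_0$; putting $\ell(n):=\ell_0(n^2)$ gives $b_{n^2}=n^{2/\beta}\ell(n)$. The key step is to show that $b_{n^2}^{-1}A^\beta_{n^2\,\cdot}$ converges in $J_1$ to a $\beta$-stable subordinator $E_\beta$, \emph{jointly} with $n^{-1}S_{n^2\,\cdot}\Rightarrow B$, and --- crucially --- with $B$ \emph{independent} of $E_\beta$; this decoupling is exactly what replaces the FIN limit of Theorem~\ref{thm:fltX} by the FK limit. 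The macroscopic increments of $A^\beta$ come from isolated activations of deep traps, and since $S$ is independent of $\tau$ the times of these activations form, in the limit, a Poisson process uncorrelated with the path of $S$, while their sizes converge to the $\beta$-stable jump law; one shows the point process of (rescaled time, rescaled size) pairs converges to the corresponding Poisson process, the small increments contributing only a vanishing drift. The estimate making the decoupling work is that no trap is activated deeply more than once before time $n^2$: with threshold $\theta_n=\epsilon b_{n^2}$, the expected number of double activations is at most $\mathbf E\big[\sum_x\ell_{n^2}(x)^2\big]\,\mathbf E\big[\tau_0^{-2\beta}e^{-2\theta_n/\tau_0}\big]\asymp n^{3}\theta_n^{-2\beta}\to0$, since $\theta_n\asymp n^{2/\beta}$. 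Moreover each deep activation occupies a single step, so the walk sits still throughout the corresponding macroscopic clock-jump and makes no intervening excursion --- this is precisely why convergence holds here in the $J_1$ topology rather than the weaker $L_{1,\mathrm{loc}}$ topology forced by the repeated excursions in Theorems~\ref{thm:fltX} and~\ref{thm:assumptX}. Inverting the limit (the inverse $E_\beta^{-1}$ is continuous, as $E_\beta$ is strictly increasing) and composing with the simple random walk limit yields $n^{-1}X^\beta_{b_{n^2}\,\cdot}=n^{-1}S_{I^S_{b_{n^2}\,\cdot}}\stackrel{J_1}{\Rightarrow}B\circ E_\beta^{-1}=FK^\beta$, the convergence holding under $P^{\tau,\beta}_x$ for all $\tau$ outside a set of $\PP$-measure tending to zero.

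The main obstacle --- and the point where the transparency parameter genuinely enters --- is the control of the heavy-tailed contributions of deep traps: for $\beta\ge1$, the near-critical case $\beta=1$, where one must show deep-trap activations are vanishingly rare rather than merely small on average, in order to obtain a quenched law of large numbers despite a logarithmically slow rate; and for $\beta\in(0,1)$, the joint convergence of the clock and walk to \emph{independent} stable and Brownian limits together with the non-clustering of deep activations, which is what both produces the FK limit and permits convergence in the Skorohod $J_1$ topology.
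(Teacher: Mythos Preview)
The paper's proof is dramatically shorter and takes a different route: it recognises $X^\beta$ as a \emph{randomly trapped random walk} in the sense of \cite{BenArous13} and simply verifies the sufficient conditions of their Theorems~2.9 and~2.11. For $\beta\ge 1$ this is one line: $m(\pi_0)\le 2$, so $\mu<\infty$, and Theorem~2.9 applies directly --- no truncation, no Borel--Cantelli, no special treatment of $\beta=1$. For $\beta\in(0,1)$ the required hypotheses are asymptotics for the Laplace transform $\hat\pi_0(\varepsilon)$ of the annealed holding-time law, namely $\mathbf E[1-\hat\pi_0(\varepsilon)]\sim \varepsilon^\beta g(\varepsilon^{-1})/L(\varepsilon^{-1})$ together with a second-moment bound; these are checked via a de~Haan-type lemma (Proposition~\ref{prop:sosv}) which shows that second-order slow variation of $L$ forces $\mathbf E[f(\tau_0/n)]\sim c\,g(n)/L(n)$ for suitable test functions $f$. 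All of the clock-process analysis, the stable limit, the decoupling of walk and subordinator, and the composition argument are packaged inside the cited black box.

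Your from-scratch route is in principle viable, but it has a genuine gap at exactly the point where second-order slow variation does its work. You assert that ``the second-order slow-variation assumption on $L$ guarantees that $\mathbb P_0(h_1>s)$ is regularly varying of index $-\beta$'' and then build everything on the resulting norming sequence $b_n$. This is not obvious: since $\mathbb P_0(h_1>s)\approx\mathbf E[\tau_0^{-\beta}e^{-s/\tau_0}]$, you are claiming that an integral of the form $\mathbf E[f(\tau_0/s)]$ behaves like $s^{-\beta}$ times a slowly varying function, and first-order slow variation of $L$ is \emph{not} enough to conclude this --- one needs precisely the de~Haan machinery that the paper invokes in Proposition~\ref{prop:sosv}. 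Without that computation you have neither the tail asymptotic, nor the norming sequence, nor the input to your no-double-activation estimate (whose bound $\mathbf E[\tau_0^{-2\beta}e^{-2\theta_n/\tau_0}]\asymp\theta_n^{-2\beta}$ is another instance of the same unproved claim). So while your outline correctly identifies the architecture of an FK limit --- stable clock, independent Brownian path, continuous inverse, $J_1$ composition --- the analytic heart of the argument is missing, and it coincides with the only substantive calculation the paper actually performs.
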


\begin{remark}
Note that the results in Theorem~\ref{thm:tt} agree with the partition of the parameter space depicted in Figure~\ref{fig}, where slow-variation is considered as the boundary case $\alpha = 0$.
\end{remark}

\begin{remark}
\label{rem:svfrep} The slowly varying function $\ell$ has an explicit representation in terms of $g$, the rate of second-order slow-variation of $L$, which in our setting is necessarily also slowly varying (see the proof of Proposition~\ref{prop:sosv}). More precisely, defining $ \Gamma(n) := n^{-\beta} g(n) / L(n)$, then $ \ell(n) = n^{-2/\beta} \Gamma^{-1}(n^{-2})$, where $\Gamma^{-1}$ denotes the right-continuous inverse of $\Gamma$. (NB.\ Without loss of generality, we may assume that $g$ is a strictly positive function and set $\Gamma^{-1}(t) :=\inf\{s:\Gamma(s) < t\}$.)
This representation explains why second-order slow-variation is a natural condition to impose in Theorem~\ref{thm:tt}.
\end{remark}

The rest of the paper is organised as follows. In Section \ref{sec:prelim} we collect preliminary results on general extremal processes and local times of simple random walks. Applying these results, in Section \ref{sec:clock} we start by studying the clock process $A$, establishing the functional limit theorems of Theorems \ref{thm:fltA} and \ref{thm:assumptA}. In the same section, we also consider the behaviour of the BTM itself, completing the proof of Theorems \ref{thm:fltX} and \ref{thm:assumptX}. In Section \ref{sec:alpha0} we prove the convergence of the FIN diffusion to the extremal FIN process of Theorem~\ref{thm:alpha0}. In Section \ref{sec:trans} we consider the $\beta$-transparent BTM, establishing Theorem~\ref{thm:tt}. Finally, in Appendix \ref{sec:appendix} we collect material related to the topologies $J_1$, $M_1$ and $L_{1, \rm{loc}}$, stating and proving some general convergence results that will be crucial for our proofs.

\section{Preliminary results}
\label{sec:prelim}

In this section we collect some preliminary results that will be used in proving the main theorems. When we describe a collection $(X_i)_{i \in I}$ of non-negative random variables as being \textit{bounded above} or \textit{bounded below} in probability we mean that $(X_i)_{i\in I}$ or $(1/X_i)_{i\in I}$, respectively, is tight.

\subsection{Extremal processes}
\label{subsec:extremal}

Let $E = (E_i)_{i \in \mathbb{N}}$ be a sequence of strictly positive i.i.d.\ random variables with arbitrary common distribution given by $\PP(E_i > u) := 1/L_E(u)$ for some non-decreasing, c\`{a}dl\`{a}g function $L_E$. Let $M = (M_n)_{n \ge 0}$ and $\Sigma = (\Sigma_n)_{n \ge 0}$ be respectively the extremal and sum processes for the sequence $E$, i.e.\
\[M_n := \max \{ E_i :  i \le \floor{n}  \} \quad \text{and} \quad  \Sigma_n = \sum_{i \le \floor{n}} E_i.\]
Further, let $\mathcal{J}$ denote the jump-set of the process $M$, i.e.\
\[ \mathcal{J} := \{ n : M_n \neq M_{n^-} \} \subseteq \mathbb{N}.\]

\begin{proposition}[Jump-set spacing]
\label{prop:jumps}
For each $C > 0$, as $n \to \infty$,
\[  |\mathcal{J} \cap (n / C, n C ] \, | \quad \text{and} \quad \frac { {\rm{sep}}( \mathcal{J} \cap (n / C, n C ] ) }{ n }   \]
are respectively bounded above and bounded below in probability, where ${\rm{sep}}(\mathcal{S})$ denotes the \textit{separation} of the set $\mathcal{S}$
\[ {\rm{sep}}(\mathcal{S}) : = \min_{\substack{ i, j \in \mathcal{S} \\ i \neq j} } |i-j|.\]
\end{proposition}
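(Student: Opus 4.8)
The plan is to analyze the jump-set $\mathcal{J}$ of the extremal process $M$ via the classical order-statistics / record-structure of i.i.d.\ samples with heavy (indeed, slowly varying) tails. The key observation is that $n \in \mathcal{J}$ precisely when $E_n$ is a strict running maximum of $E_1, \dots, E_n$, i.e.\ $E_n > \max_{i < n} E_i$. So $|\mathcal{J} \cap (n/C, nC]|$ is exactly the number of records of the i.i.d.\ sequence occurring at indices in $(n/C, nC]$, and the separation ${\rm sep}(\mathcal{J} \cap (n/C, nC])$ is the minimal gap between consecutive such record-times. A crucial feature we exploit is that the record-time structure of an i.i.d.\ sequence with a \emph{continuous} distribution does not depend on the distribution at all; since here the $E_i$ may have atoms, I would first reduce to the continuous case by a standard tie-breaking / randomization argument (replace $E_i$ by $(E_i, U_i)$ with $U_i$ i.i.d.\ uniform and use the lexicographic order — this can only merge jumps of $M$, hence only decreases $|\mathcal{J} \cap \cdot|$ and increases ${\rm sep}$, so an upper bound on the count and a lower bound on the separation for the randomized version suffice; alternatively note the tail function $L$ appears nowhere in the statement, consistent with distribution-independence).

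For the first quantity, I would use the well-known fact that the indicator $R_n := \mathbf{1}\{E_n \text{ is a record}\}$ satisfies $\mathbb{P}(R_n = 1) = 1/n$ and that the $R_n$ are \emph{independent} across $n$. Hence $|\mathcal{J} \cap (n/C, nC]| = \sum_{n/C < k \le nC} R_k$ has mean $\sum_{n/C < k \le nC} 1/k \to \log(C^2) = 2\log C$ and bounded variance, so by Markov's (or Chebyshev's) inequality it is tight — in fact bounded in probability by a constant depending only on $C$. This handles the "bounded above in probability" claim.

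For the separation, I want to show that with high probability no two record-times in $(n/C, nC]$ are within $\varepsilon n$ of each other, for $\varepsilon$ small. By a union bound, the probability that there exist $k < \ell$ both in $(n/C, nC]$ with $R_k = R_\ell = 1$ and $\ell - k \le \varepsilon n$ is at most $\sum_{k} \sum_{k < \ell \le k + \varepsilon n} \mathbb{P}(R_k = 1)\,\mathbb{P}(R_\ell = 1 \mid R_k = 1)$; since conditioning on $R_k=1$ only concerns the sample up to time $k$, one has $\mathbb{P}(R_\ell = 1 \mid R_k = 1) \le$ a quantity comparable to $1/\ell$ (independence of record indicators makes this exactly $1/\ell$), and $\mathbb{P}(R_k=1) = 1/k$, giving a bound of order $\sum_{n/C < k \le nC} \frac{1}{k} \sum_{k < \ell \le k+\varepsilon n} \frac{1}{\ell} = O\!\big( (2\log C) \cdot (\varepsilon n / (n/C)) \big) = O(C \varepsilon \log C)$, which tends to $0$ as $\varepsilon \to 0$ uniformly in $n$. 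Thus ${\rm sep}(\mathcal{J} \cap (n/C, nC]) / n$ is bounded below in probability.

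The main obstacle — really the only subtle point — is the reduction to a continuous distribution, since the $E_i$ (and in the application, the $\tau_x$) genuinely can have atoms, and with atoms the record indicators are no longer independent nor do they have probability exactly $1/n$. I would deal with this cleanly via the lexicographic randomization described above: the randomized sequence has a continuous law, its jump-set $\widetilde{\mathcal{J}}$ satisfies $\widetilde{\mathcal{J}} \supseteq \mathcal{J}$ (a new record can only be created, never destroyed, by breaking ties), hence $|\mathcal{J} \cap I| \le |\widetilde{\mathcal{J}} \cap I|$ and ${\rm sep}(\mathcal{J} \cap I) \ge {\rm sep}(\widetilde{\mathcal{J}} \cap I)$ for any interval $I$, so the bounds proved for the continuous case transfer immediately. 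Everything else is routine first/second-moment computation with the harmonic series.
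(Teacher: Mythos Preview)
Your proof is correct and complete, but it takes a genuinely different route from the paper's. The paper couples $(E_i)$ to an i.i.d.\ unit-exponential sequence $(\psi_i)$ via the inverse transform $E_i = L_E^{-1}(e^{\psi_i})$, which yields $\mathcal{J}\subseteq\mathcal{J}^\psi$; it then bounds the cardinality and separation of $\mathcal{J}^\psi\cap(n/C,nC]$ by passing to the larger set of \emph{exceedances} $\{i>n/C:\psi_i\ge M^\psi_{n/C}\}$, whose count is conditionally binomial and whose gaps are conditionally geometric, and controls the relevant parameter $e^{-M^\psi_{n/C}}$ via the Gumbel limit $M^\psi_n-\log n\Rightarrow\mathcal{G}$. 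Your argument instead goes directly through R\'enyi's distribution-free record theory: the record indicators $R_k$ are independent with $\mathbb{P}(R_k=1)=1/k$, so the count has mean $\sim 2\log C$ and bounded variance, and a union bound over pairs handles the separation. Your lexicographic tie-breaking to absorb atoms plays exactly the role of the paper's inverse-transform coupling (both produce a continuous-law sequence whose jump-set contains $\mathcal{J}$). Your approach is more elementary and self-contained, avoiding the Gumbel limit entirely; the paper's exceedance viewpoint, on the other hand, dovetails with the threshold-based local-time estimates used later in the argument.
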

\begin{proof}
Let $\psi = \left({\psi}_i \right)_{i \in \mathbb{N}}$ be a sequence of i.i.d.\ unit-mean exponential random variables, with $M^{\psi}_n$ the extremal processes for $\psi$, i.e.\ $M^\psi_n := \max \{ \psi_i :  i \le \floor{n}  \}$, and $\mathcal{J}^\psi$ its associated jump-set, i.e.\ $ \mathcal{J}^\psi := \{ n : M^\psi_n \neq M^\psi_{n^-} \}$. Denote by $(k^n_i)_{i\geq 1}$ the ordered list of indices $i> n/C$ such that $\psi_i\geq M^\psi_{n/C}$ and abbreviate $K_n:=|\{i:k_i^n\leq nC\}|$. Clearly we have that
\[ |\mathcal{J^\psi} \cap (n / C, n C ] \, | \le K_n\]
and
\[ {\rm{sep}}( \mathcal{J^\psi} \cap (n / C, n C ] )  \geq {\rm{sep}}( k_i^n : i = 1, 2, \ldots, K_n ). \]
Moreover, by the inverse transform theorem,
\begin{equation}\label{itt}
E_n \stackrel{d}{=} L_E^{-1}(\exp \{ \psi_n \})
\end{equation}
and so there is a coupling of the sequences $E$ and $\psi$ such that, for all $n$,
\[ M_n \neq M_{n^-}  \implies  M^{\psi}_n \neq M^{\psi}_{n^-}. \]
Therefore, for this coupling, $ \mathcal{J} \subseteq \mathcal{J}^\psi$,
and so it is sufficient to prove that
\[  K_n \quad \text{and} \quad \frac{ {\rm{sep}}( k_i : i = 1, 2, \ldots, K_n ) }{ n }   \]
are respectively bounded above and bounded below in probability.

For the first, note that, conditionally on $M^\psi_{n/C}$, the random variable $K_n$ is distributed as
\begin{align}
\label{eq:jumps}
\text{Bi} \left( \floor{nC}, \exp\{ - M^\psi_{n/C} \} \right),
\end{align}
where $\text{Bi}(n, p)$ denotes a binomial random variable with parameters $n$ and $p$. It is a classical result of extreme-value theory (see, for example, \cite{Resnick87}) that, as $n \to \infty$,
\[M^\psi_{n} - \log n  \Rightarrow \mathcal{G} \quad \text{in distribution,} \]
where $\mathcal{G}$ denotes the Gumbel distribution, and so
\begin{align}
\label{eq:jumps2}
n \exp\{ - M^\psi_{n/C} \}
\end{align}
is bounded above in probability. Together with equation \eqref{eq:jumps} and Markov's inequality, this implies that $K_n$ is also bounded above in probability.

For the second, note that, conditionally on $M^\psi_{n/C}$, for any $i\geq 1$, the distance $k^n_{i+1} - k^n_i$ is distributed as $ \text{Geo} (\exp\{ - M^\psi_{n/C} \})$, where $\text{Geo}(p)$ denotes a geometric random variable (with support $1,2,\dots$). Again, by equation \eqref{eq:jumps2}, this implies that $n^{-1}(k^n_{i+1}- k^n_i)$ is bounded below in probability. By applying a union bound (conditional on $K_n$, which we already know is bounded above in probability), we get the result.
\end{proof}

Under the assumption that $L_E$ is slowly varying, it turns out that the processes $M$ and $\Sigma$ have scaling limits that coincide. Let $\mathcal{P}^+ := \mathcal{P}|_{\mathbb{R}^+ \times \mathbb{R}^+} $ denote the point process $\mathcal{P}$ defined in Section \ref{subsec:scalinglimit} restricted to the upper-right-quadrant, and let $m = (m_t)_{t \ge 0}$ be the extremal process for $\mathcal{P}^+$, that is
\[ m_t := \max \{ v_i : 0 \le x_i \le t\}.\]

\begin{proposition}[Functional limit theorem for the extremal and sum processes; \cite{Lamperti64, Kasahara86}]
\label{prop:j1conv} Assume $L_E$ satisfies the slow-variation property \eqref{eq:slow}. Then, as $n \to \infty$,
\[\left( \frac{1}{n} L_E (M_{n t } )\right)_{t\geq 0} \stackrel{J_1}{\Rightarrow}\left( m_t \right)_{t\geq 0} \quad \text{and} \quad \left(\frac{1}{n} L_E (\Sigma_{n t} ) \right)_{t\geq 0} \stackrel{J_1}{\Rightarrow} \left(m_t \right)_{t\geq 0}\]
in distribution.
\end{proposition}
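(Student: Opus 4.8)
The plan is to prove both limits by establishing convergence of the finite-dimensional distributions together with $J_1$-tightness, treating the extremal process $M$ first and then deducing the sum-process statement from it via the one-large-jump principle that, under slow variation, forces $\Sigma_n$ to be asymptotically equal to $M_n$. The one analytic input needed throughout is that, writing $L_E^{-1}$ for the generalised (right-continuous) inverse of $L_E$, one has $L_E(L_E^{-1}(z)) \sim z$ as $z \to \infty$, equivalently $\PP(L_E(E_1) > z) \sim 1/z$ as $z \to \infty$. This follows from the uniform convergence theorem for monotone slowly varying functions (see \cite[Ch.~1]{Bingham87}), which in particular makes the jumps of $L_E$ asymptotically negligible in ratio, so that $\PP(L_E(E_1) > z)$ — which lies between $1/L_E(L_E^{-1}(z))$ and $1/L_E(L_E^{-1}(z)^{-})$ — is squeezed onto $1/z$.

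Granting this, finite-dimensional convergence of $(n^{-1} L_E(M_{nt}))_{t \ge 0}$ to $(m_t)_{t \ge 0}$ is a direct computation: for times $0 < t_1 < \dots < t_k$ and (without loss of generality, since $m$ is non-decreasing) levels $y_1 \le \dots \le y_k$, independence of the $E_i$ and monotonicity of $L_E$ give
\[ \PP\Big( \tfrac1n L_E(M_{n t_j}) \le y_j \text{ for all } j \Big) = \prod_{j=1}^{k} \big( 1 - \PP( L_E(E_1) > n y_j ) \big)^{\floor{n t_j} - \floor{n t_{j-1}}} , \]
with $t_0 := 0$, and by the analytic input each factor tends to $\exp(-(t_j - t_{j-1})/y_j)$; the product equals $\PP(m_{t_j} \le y_j \text{ for all } j)$, computed from the Poisson structure of $\mathcal{P}^+$ over the disjoint strips $(t_{j-1}, t_j] \times \mathbb{R}^+$, and since the laws of $(m_{t_1}, \dots, m_{t_k})$ have no atoms this yields the claimed convergence. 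For $J_1$-tightness we invoke Proposition~\ref{prop:jumps}: the process $n^{-1} L_E(M_{n \cdot})$ is non-decreasing and pure-jump with jump times $\mathcal{J}/n$, and Proposition~\ref{prop:jumps} shows that on any window $(\varepsilon, T]$ these are, with high probability, bounded in number with separation bounded below; since moreover $n^{-1} L_E(M_{n \cdot})$ is bounded on $[0, T]$ (it converges at $T$) and its oscillation over $[0, \varepsilon]$ is at most $n^{-1} L_E(M_{n \varepsilon})$, which is small for small $\varepsilon$, the oscillation criterion for $J_1$-tightness is met. Hence $n^{-1} L_E(M_{n \cdot}) \stackrel{J_1}{\Rightarrow} m$. (It is precisely because each jump of $M$ records a single new maximum that no jump-splitting occurs, which is why the convergence is in $J_1$ and not merely $M_1$; alternatively one may read the result off from the vague convergence $\sum_i \delta_{(i/n,\, n^{-1} L_E(E_i))} \Rightarrow \mathcal{P}^+$ together with $J_1$-continuity of the running-supremum functional, cf.\ \cite{Resnick87}.)

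For the sum process, slow variation of $L_E$ forces $\mathbf{E}[E_1] = \infty$, and the one-large-jump principle gives $\Sigma_n / M_n \to 1$ in probability. Since $M_{nt} \to \infty$ in probability for each $t > 0$, the uniform convergence theorem for $L_E$ yields $n^{-1} L_E(\Sigma_{nt}) = n^{-1} L_E(M_{nt})(1 + o_P(1))$, jointly over finitely many times, so the finite-dimensional distributions of $n^{-1} L_E(\Sigma_{n\cdot})$ also converge to those of $m$. For tightness it suffices to show that $n^{-1} L_E(\Sigma_{n\cdot})$ is uniformly close to $n^{-1} L_E(M_{n\cdot})$ on $[0,T]$ in probability, which forces the two to share a $J_1$ limit. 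On $[0, \varepsilon]$ the total variation of $n^{-1} L_E(\Sigma_{n\cdot})$ is at most $n^{-1} L_E(\Sigma_{n\varepsilon}) \Rightarrow m_\varepsilon$, small for small $\varepsilon$. On $[\varepsilon, T]$ one argues record by record: between two consecutive records of $M$ the running maximum is a fixed value while the partial sum stays within a $(1 + o_P(1))$-factor of it (one-large-jump again), so $n^{-1} L_E(\Sigma_{n\cdot})$ increases there by only $o_P(1)$, and at a record the jumps of $n^{-1} L_E(\Sigma_{n\cdot})$ and $n^{-1} L_E(M_{n\cdot})$ agree up to $o_P(1)$; since, by Proposition~\ref{prop:jumps}, there are only $O_P(1)$ records in $(\varepsilon, T]$, the accumulated discrepancy is $o_P(1)$. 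The main obstacle I anticipate is exactly this last step — making the one-large-jump comparison uniform in time: because $\Sigma_n/M_n$ is not monotone in $n$, one cannot control its supremum over the relevant (random) range of indices by its value at an endpoint (such a bound is far too lossy, since $M_{nT}$ may be vastly larger than $M_{n\varepsilon}$ when $L_E$ grows slowly), and the record-by-record argument above, which is essentially Kasahara's refinement of Lamperti's theorem \cite{Kasahara86}, is the way around this. The extremal-process half of the proof is routine once Proposition~\ref{prop:jumps} is in hand.
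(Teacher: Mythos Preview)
Your argument is essentially correct, but the route differs from the paper's. The paper first invokes \cite{Kasahara86} directly for the sum-process statement, and then derives the extremal-process statement from Lamperti's theorem applied to i.i.d.\ exponentials $\psi_i$ (obtaining $M^\psi_{nt} - \log n \stackrel{J_1}{\Rightarrow} \log m_t$), transferred via the coupling $E_i \stackrel{d}{=} L_E^{-1}(e^{\psi_i})$; the asymptotic $L_E(L_E^{-1}(x)) \sim x$ enters exactly as in your analytic input, and the behaviour near $t=0$ is controlled using the already-established $\Sigma$-limit. You reverse the logical order: you prove the $M$-limit directly (finite-dimensional distributions by explicit computation, $J_1$-tightness via Proposition~\ref{prop:jumps}) and then deduce the $\Sigma$-limit from $\Sigma_n/M_n \to 1$, the uniform-in-time version of which --- as you correctly flag --- is precisely the content of Kasahara's argument. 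Your treatment of $M$ is more self-contained and makes good use of Proposition~\ref{prop:jumps}, which the paper does not exploit at this point; the paper's exponential-coupling-plus-Lamperti device is shorter and sidesteps a direct tightness verification. Either way, the substantive part of the $\Sigma$-statement still rests on \cite{Kasahara86}.
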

\begin{proof}
The limit theorem for $\Sigma$ is the main result of \cite{Kasahara86}. The limit theorem for $M$ may be derived by first applying \cite[Theorems 2.1 and 3.2]{Lamperti64} to the random variables $(\psi_i)_{i\geq 1}$ introduced in the proof of Proposition~\ref{prop:jumps}, which yields
\begin{equation}\label{lim1}
M_{nt}^\psi-\log n\stackrel{J_1}{\Rightarrow} \log m_t
\end{equation}
(where we note that the limit process $(m(t))_{t\geq0}$ in \cite{Lamperti64} is $(\log m_t)_{t \ge 0}$ in our notation), and then transforming using (\ref{itt}). For the final step, one should be slightly careful since $L_E(E_i)$ is not identically distributed as $e^{\psi_i}$ in general. However, we do have that
\[\frac{1}{n} L_E \left(M_{n t }\right)\stackrel{d}{=}
\frac{L_E\left(L_E^{-1}\left(e^{M_{nt}^\psi}\right)\right)}{e^{M_{nt}^\psi}}\times \frac{{e^{M_{nt}^\psi}}}{n}\]
as processes. By (\ref{lim1}), the second product converges in distribution to $(m_t)_{t\geq0}$ in the $J_1$ topology. As for the effect of multiplying by the first term, this can then be controlled using the facts that $e^{M^\psi_n}\to\infty$ almost-surely, $L_E(L_E^{-1}(x))\sim x$ as $x\to \infty$ (since $L_E$ is slowly varying), and also, for any $\varepsilon>0$,
\[\lim_{t\to0}\lim_{n\to\infty}\mathbf{P}\left(n^{-1}L_E \left(M_{n t }\right)\geq\varepsilon \right)\leq \lim_{t\to0}\lim_{n\to\infty}\mathbf{P}\left(n^{-1}L_E \left(\Sigma_{n t }\right)\geq\varepsilon\right)=0\]
by the first part of the proposition.
\end{proof}

\subsection{Simple random walks and local time}
\label{subsec:srw}
For $R = (R_i)_{i\geq0}$ a discrete-time simple random walk on $\ZZ$, let
\[d_n := \max_{i \le \floor{n}} R_i - \min_{i \le \floor{n}} R_i  \]
be the associated diffusion distance. We then have the following as a simple consequence of Donsker's invariance principle.

\begin{proposition}[Bounds for the diffusion distance of a SRW]
\label{prop:diffdist}
As $n \to \infty$,
\[  \frac{  d_n }{\sqrt{n}} \]
is bounded below and above in probability.
\end{proposition}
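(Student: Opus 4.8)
The plan is to read off both bounds directly from Donsker's invariance principle via the continuous mapping theorem. Recall that the rescaled walk
\[ \left( \frac{R_{\floor{nt}}}{\sqrt n} \right)_{t \in [0,1]} \Rightarrow \left( B_t \right)_{t \in [0,1]} \]
in $D([0,1])$ equipped with the $J_1$ topology; since the limit $B$ has almost surely continuous paths, this convergence in fact holds with respect to the uniform topology on $[0,1]$. The functional $f \mapsto \sup_{t \in [0,1]} f(t) - \inf_{t \in [0,1]} f(t)$ is Lipschitz (hence continuous) with respect to the uniform norm, and, because $\floor{nt}$ ranges over $0,1,\dots,\floor n$ as $t$ ranges over $[0,1]$,
\[ \sup_{t \in [0,1]} \frac{R_{\floor{nt}}}{\sqrt n} - \inf_{t \in [0,1]} \frac{R_{\floor{nt}}}{\sqrt n} = \frac{d_n}{\sqrt n}. \]
Hence the continuous mapping theorem yields $d_n / \sqrt n \Rightarrow W$, where $W := \sup_{t \in [0,1]} B_t - \inf_{t \in [0,1]} B_t$.

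Now observe that $W$ is almost surely finite (Brownian paths are bounded on $[0,1]$) and almost surely strictly positive (Brownian paths are almost surely non-constant on $[0,1]$). Convergence in distribution to the almost-surely-finite random variable $W$ immediately implies that $(d_n/\sqrt n)_n$ is tight, which is the assertion that $d_n/\sqrt n$ is bounded above in probability. For the lower bound, apply the continuous mapping theorem once more with the map $x \mapsto 1/x$, which is continuous on $(0,\infty)$ and which we may apply since $\mathbf{P}(W \in (0,\infty)) = 1$ and $d_n \ge 1$ for all $n \ge 1$; this gives $\sqrt n / d_n \Rightarrow 1/W$. Since $1/W$ is almost surely finite, $(\sqrt n / d_n)_n$ is tight, i.e.\ $d_n/\sqrt n$ is bounded below in probability.

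There is essentially no obstacle in this argument; the only points requiring (routine) care are the passage from $J_1$- to uniform convergence in Donsker's theorem, which is valid because the limit is almost surely continuous, and the verification that $W > 0$ almost surely. If one preferred to avoid invoking Donsker's theorem, the same two bounds could be obtained by hand: the upper bound from $d_n \le 2 \max_{i \le \floor n} |R_i|$ combined with Kolmogorov's maximal inequality, giving $\mathbf{P}(d_n > \lambda \sqrt n) \le 4\lambda^{-2}$ uniformly in $n$, and the lower bound from the standard estimate that the probability a simple random walk remains within an interval of length $\varepsilon \sqrt n$ up to time $n$ is at most $e^{-c/\varepsilon^2}$ uniformly in $n$; but the invariance-principle route is cleaner and is the one I would present.
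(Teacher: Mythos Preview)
Your proof is correct and follows exactly the approach indicated in the paper, which simply states the result as ``a simple consequence of Donsker's invariance principle'' without further detail. You have supplied the routine details (continuous mapping for the range functional, and the observation that $W>0$ almost surely to obtain the lower bound) that the paper leaves implicit.
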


Next, let $\nu(n, x)$ be the (continuous) local time of a continuous-time simple random walk (CTSRW) after $n$ steps,
\[ \nu(n, x) :=  \sum_{ \{0 \le i \le \floor{n}  : R_i = x \} } \psi_i, \]
where $\psi = (\psi_i)_{i \in \mathbb{N}}$ is a sequence of i.i.d.\ unit-mean exponential distributions, and let $\nu_{\text{max}}(n)$ and $\nu_{\text{min}}(n)$ be respectively the maximum and minimum local times among the sites visited by $R$ after $n$ steps
\[ \nu_{\text{max}}(n) := \max_{ x \in \{R_i : \, i \le n\} } \nu(n, x) := \max_{x \in \ZZ} \nu(n, x);\]
\[ \nu_{\text{min}}(n) := \min_{ x \in \{R_i : \, i \le n\} } \nu(n, x).\]
We complete this subsection by deriving the bounds we will need for these quantities.

\begin{proposition}[Bounds for the local time of a CTSRW]
\label{prop:ctslocaltime}
As $n \to \infty$,
\[ \frac{ \nu(n, 0) }{ \sqrt{n} }  \quad \text{and} \quad  \frac{\nu_{\text{max}}(n)}{\sqrt{n}}\]
are both bounded below and above in probability.
\end{proposition}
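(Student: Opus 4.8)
The plan is to reduce both statements to facts about the discrete occupation counts $N(n,x) := |\{0 \le i \le \floor{n} : R_i = x\}|$. The key observation is that, conditionally on the trajectory of $R$, the local time $\nu(n,x)$ is a sum of $N(n,x)$ i.i.d.\ unit-mean exponentials; moreover, for distinct sites these sums involve disjoint blocks of the $\psi_i$, so they are conditionally independent. In other words, conditionally on $R$, $\nu(n,x)$ is $\mathrm{Gamma}(N(n,x),1)$-distributed. I would then argue in three stages: first treat $\nu(n,0)$ directly; then deduce the lower bound for $\nu_{\mathrm{max}}(n)$ essentially for free; and finally work a little for the upper bound on $\nu_{\mathrm{max}}(n)$.

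For $\nu(n,0)$, I would invoke the classical facts (consequences of the local central limit theorem, or of Donsker's theorem combined with the convergence of rescaled random walk local times) that $n^{-1/2} N(n,0)$ is bounded both above and below in probability, and that $N(n,0) \to \infty$ almost surely by recurrence of $R$. The latter allows the strong law of large numbers to be applied to the exponentials attached to the successive visits of $R$ to $0$ --- these are i.i.d.\ and independent of $R$, since the whole family $(\psi_i)$ is --- giving $\nu(n,0)/N(n,0) \to 1$ almost surely. Writing $n^{-1/2}\nu(n,0) = \big(\nu(n,0)/N(n,0)\big)\big(n^{-1/2}N(n,0)\big)$ then yields the first claim. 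Since $R$ visits its starting point $0$, we have $\nu_{\mathrm{max}}(n) \ge \nu(n,0)$, so the lower bound in the second claim follows immediately.

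For the upper bound on $\nu_{\mathrm{max}}(n)$ I would condition on $R$ and apply a union bound over the visited sites. Fix $\varepsilon > 0$. Using Proposition~\ref{prop:diffdist} (so that there are at most $O(\sqrt n)$ visited sites) and the classical fact that $n^{-1/2}\max_x N(n,x)$ is bounded above in probability (a consequence of the convergence of rescaled random walk local times to Brownian local time, whose spatial supremum over a fixed time horizon is almost surely finite by joint continuity and compact support), I would choose $A, B$ so that the event $G_n := \{d_n \le B\sqrt n\} \cap \{\max_x N(n,x) \le A\sqrt n\}$ has probability at least $1-\varepsilon$ for all large $n$. On $G_n$ there are at most $B\sqrt n + 1$ visited sites, each with $N(n,x) \le A\sqrt n$, and a standard Chernoff (sub-exponential) tail bound for the Gamma distribution gives, for any $C \ge 2A$ and each visited $x$,
\[ \PP\big( \nu(n,x) > C\sqrt n \,\big|\, R \big) \le \PP\big( \mathrm{Gamma}(N(n,x),1) > N(n,x) + (C-A)\sqrt n \big) \le e^{-c(C-A)\sqrt n} \]
on $G_n$, for a universal constant $c > 0$. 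A union bound then gives $\PP(\nu_{\mathrm{max}}(n) > C\sqrt n \mid R) \le (B\sqrt n + 1)e^{-c(C-A)\sqrt n}$ on $G_n$, and hence $\PP(\nu_{\mathrm{max}}(n) > C\sqrt n) \le \varepsilon + (B\sqrt n + 1)e^{-c(C-A)\sqrt n} \le 2\varepsilon$ for $n$ large; since $\varepsilon$ is arbitrary, this gives the upper bound.

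The step I expect to be the main obstacle is securing the input, used in the last stage, that $\max_x N(n,x)$ --- and therefore $\nu_{\mathrm{max}}(n)$ --- is genuinely of order $\sqrt n$ rather than, say, $\sqrt{n\log n}$: a naive union bound over the $O(\sqrt n)$ visited sites of the (stretched-exponential) tail of a single $N(n,x)$ is not sharp enough, so one really does need to appeal to the precise order of the maximal local time of simple random walk. Once that is granted, everything else is routine, the only minor care needed being the relabelling of the exponentials attached to the visit times in the $\nu(n,0)$ argument.
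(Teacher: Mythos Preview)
Your argument is correct, but it takes a different route from the paper's proof. The paper observes that the Brownian local time at $0$ accumulated up to the first exit of $(-1,1)$ is exponentially distributed with mean $1$, so that the entire array $(\nu(n,x))_{n\ge0,x\in\mathbb{Z}}$ is equal in law to $(L_{\sigma(n)}(x))_{n\ge0,x\in\mathbb{Z}}$, where $\sigma(n)$ is the $n^\mathrm{th}$ lattice-hitting time of $B$. Since $\sigma(n)/n\to1$ a.s.\ and Brownian local time scales as $\sqrt{n}$, this gives outright distributional convergence $\nu(n,0)/\sqrt{n}\Rightarrow L_1(0)$ and $\nu_{\max}(n)/\sqrt{n}\Rightarrow\sup_x L_1(x)$, from which both bounds follow immediately.

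By contrast, you factor $\nu(n,x)$ into the discrete occupation count $N(n,x)$ and the average of the attached exponentials, and then combine classical tightness results for $N(n,0)/\sqrt{n}$ and $\max_x N(n,x)/\sqrt{n}$ with a strong law for the exponentials and a Chernoff-plus-union-bound argument. This is perfectly valid; the Chernoff bound with $c=1-\ln 2$ and the threshold $C\ge 2A$ work exactly as you describe. The trade-off is that your proof is more self-contained at the level of random-walk arguments, but you ultimately invoke as a black box the fact that $\max_x N(n,x)/\sqrt{n}$ is tight --- which is essentially the same local-time invariance principle that the paper's embedding delivers directly (and the paper gets distributional limits, not just tightness, for free). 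So the paper's proof is shorter and yields more, while yours is more explicit about where the exponential holding times enter.
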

\begin{proof} If $(B_t)_{t\geq 0}$ is a Brownian motion, and $(L_t(x))_{t\geq0,x\in\mathbb{R}}$ is its local time process, then it is standard that $L_{\sigma_{\pm1}}(0)$ has an  exponential distribution with mean one, where $\sigma_{\pm1}$ is the first hitting time of $\pm1$. It follows that $(\nu(n,x))_{n\geq 0,x\in \mathbb{Z}}$ has the same distribution as $(L_{\sigma(n)}(x))_{n\geq 0,x\in\mathbb{Z}}$, where $\sigma(0)=0$ and, for $n\geq1$, $\sigma(n):=\inf\{t>\sigma(n-1):\:B_t\in \mathbb{Z}\backslash\{B_{\sigma(n-1)}\}\}$. Since $n^{-1}\sigma(n)\rightarrow 1$ almost-surely, it follows that
\[\frac{\nu(n,0)}{\sqrt{n}}\Rightarrow L_1(0),\qquad\frac{\nu_{\text{max}}(n)}{\sqrt{n}}\Rightarrow \sup_{x\in\mathbb{R}}L_1(x),\]
in distribution, where to deduce this it is also helpful to recall the scaling property of Brownian local times, i.e.\ $(L_t(x))_{t\geq0,x\in\mathbb{R}}\stackrel{d}{=}(\lambda^{-1/2}L_{\lambda t}(\lambda^{1/2}x))_{t\geq0,x\in\mathbb{R}}$. The result follows.
\end{proof}

\begin{proposition}[Bound for the minimum local time of a CTSRW]
\label{prop:ctslocaltimemin}
For any $T > \delta > 0$, as $n \to \infty$,
\[  n \min_{ i \in [n^2 \delta, n^2 T] } \nu_{\text{min}}(i) \]
is bounded below in probability.
\end{proposition}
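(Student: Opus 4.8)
The plan is to bound $\nu_{\text{min}}(i)$ from below by discarding all but the first exponential holding time accrued at each visited site. For a site $x$ visited by the random walk $R$ within its first $\floor{n^2T}$ steps, let $T_x := \min\{j\geq 0 : R_j = x\}$ denote the step of first visit, and write $V_i := \{R_j : j \leq \floor{i}\}$ for the set of sites visited within the first $\floor{i}$ steps (an integer interval, since $R$ is nearest-neighbour). Since $\nu(i,x) = \sum_{\{j\leq \floor{i}\,:\, R_j = x\}}\psi_j \geq \psi_{T_x}$ whenever $x \in V_i$, we have $\nu_{\text{min}}(i) \geq \min_{x\in V_i}\psi_{T_x}$; and since $V_i$ is non-decreasing in $i$, this lower bound is non-increasing in $i$, so that
\[ \min_{i\in[n^2\delta,\,n^2T]}\nu_{\text{min}}(i) \ \geq\ \min_{x\in V_{\floor{n^2T}}}\psi_{T_x}. \]
(In particular the lower cut-off $n^2\delta$ plays no role for this direction.) It therefore suffices to show that $n\min_{x\in V_{\floor{n^2T}}}\psi_{T_x}$ is bounded below in probability.

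Next I would use that the map $x\mapsto T_x$ is injective, since each step can be the first visit to at most one site. Consequently, conditional on the walk $R$, the family $(\psi_{T_x})_{x\in V_{\floor{n^2T}}}$ consists of $N_n := |V_{\floor{n^2T}}| = d_{\floor{n^2T}}+1$ i.i.d.\ unit-mean exponential random variables; here it is essential that $(\psi_j)_j$ is independent of $R$. Hence, for any $\varepsilon>0$,
\[ \mathbf{P}\Big(\,n\min_{x\in V_{\floor{n^2T}}}\psi_{T_x} < \varepsilon \ \Big|\ R\,\Big) \ =\ 1 - e^{-N_n\varepsilon/n} \ \leq\ \frac{N_n\varepsilon}{n}\ =\ \frac{(d_{\floor{n^2T}}+1)\varepsilon}{n}. \]
Given $\eta>0$, Proposition~\ref{prop:diffdist} (which ensures $d_{\floor{n^2T}}/n$ is bounded above in probability, as $\floor{n^2T}\asymp n^2$) provides a constant $C$ with $\mathbf{P}(d_{\floor{n^2T}} > Cn) < \eta/2$ for all large $n$. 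Taking expectations in the displayed bound and splitting according to whether $N_n \leq (C+1)n$ or not, we obtain
\[ \mathbf{P}\Big(\,n\min_{x\in V_{\floor{n^2T}}}\psi_{T_x} < \varepsilon\Big) \ \leq\ (C+1)\varepsilon + \mathbf{P}\big(d_{\floor{n^2T}}> Cn\big)\ \leq\ (C+1)\varepsilon + \tfrac{\eta}{2} \]
for all large $n$; choosing $\varepsilon$ small enough that $(C+1)\varepsilon < \eta/2$ then yields the claim. (Alternatively one can avoid the conditioning altogether, taking expectations directly to get $\mathbf{P}(\,\cdots\,) \le \varepsilon(\mathbf{E}[d_{\floor{n^2T}}]+1)/n$ and invoking $\mathbf{E}[d_{\floor{n^2T}}] = O(n)$, which follows from a maximal inequality for the simple random walk.)

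I do not anticipate any genuine obstacle here; the only step requiring a little care is the conditional-independence claim — that the holding times attached to the distinct random times $T_x$ are i.i.d.\ given $R$ — which is immediate from the independence of $(\psi_j)_j$ and $R$ together with the injectivity of $x\mapsto T_x$. It is worth noting that the normalisation by $n$ is sharp: by Proposition~\ref{prop:diffdist} the walk discovers of order $n$ distinct sites within $n^2T$ steps, so $\min_{x}\psi_{T_x}$ is already of order $1/n$.
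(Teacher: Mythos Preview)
Your proof is correct and follows essentially the same approach as the paper: the paper's proof is the two-line remark ``Combine the identity $n\min_{i\le n}\psi_i \stackrel{d}{=}\psi_0$ with the bounds on the diffusion distance of Proposition~\ref{prop:diffdist}'', and your argument is precisely the natural unpacking of this --- bounding each $\nu(i,x)$ below by the first-visit exponential $\psi_{T_x}$, noting that the number of visited sites is $d_{\floor{n^2T}}+1=O(n)$, and using the exponential minimum identity. Your write-up is in fact more careful than the paper's, since you make explicit the conditional-independence step (that the $(\psi_{T_x})_x$ are i.i.d.\ given $R$, via injectivity of $x\mapsto T_x$ and independence of $\psi$ from $R$) that the paper leaves implicit.
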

\begin{proof}
Combine the identity
\[ n \min_{i \le n} \psi_i \stackrel{d}{=} \psi_0  \ , \quad n \in \mathbb{N} \] with the bounds on the diffusion distance of Proposition~\ref{prop:diffdist}.
\end{proof}

\section{Scaling limits for the clock process and the BTM}
\label{sec:clock}

In this section we prove the convergence of the clock process $A$ to the limit process $m^B$ that is stated above in Theorems \ref{thm:fltA} and \ref{thm:assumptA}. The strategy is to `squeeze' the clock process $A$ between the $S$-explored extremal and sum processes defined respectively by
\[ M^X_n := \max \left\{ \tau_x : \min_{i \le \floor{n}} S_n \le x \le \max_{i \le \floor{n}} S_n \right\}  \quad \text{and} \quad \Sigma^X_n := \sum_{x=\min_{i \le \floor{n}} S_n}^{\max_{i \le \floor{n}} S_n}\tau_x.\]
We then apply a general squeeze convergence result for the Skorohod $M_1$ topology that we state and prove in Appendix \ref{sec:appendix} to complete the proof.

Throughout this section, fix constants $T > \delta > 0$. For technical reasons, we will additionally define an auxiliary function $h_n \to \infty$ growing sufficiently slowly that
\begin{align}
\label{eq:decay1}
\lim_{n \to \infty} \frac{L( L^{-1}(n / h_n) / h_n)}{L(L^{-1}(n / h_n))}  = \lim_{n \to \infty} \frac{L( L^{-1}(n / h_n) \, h_n)}{L(L^{-1}(n/h_n)) }  =  1,
\end{align}
remarking that such an $h_n$ is guaranteed to exist by the slow-variation property \eqref{eq:slow} and since $\lim_{n \to \infty} L^{-1}(n) = \infty$. For completeness, we give an explicit construction of an $h_n$ satisfying the left-hand side of equation \eqref{eq:decay1}; the construction of an $h_n$ that simultaneously satisfies the right-hand side of equation \eqref{eq:decay1} is analogous. Define an arbitrary increasing sequence $c = (c_i)_{i \in \mathbb{N}} \to \infty$, and denote, for each $u > 0$ and each $n > 0$,
\[ f^n(u) := \frac{ L( L^{-1}(n u) u) }{L(L^{-1}(n u))}.\]
By the slow-variation property and since $\lim_{n \to \infty} L^{-1}(n) = \infty$, we have that $f^n(u) \to 1$ for each $u$. This means that, for each $i \in \mathbb{N}$, there exists an $n_i \in \mathbb{N}$ such that
\[ | 1 - f^n(1/c_i)| < 1/c_i \quad \text{for all } n \ge n_i.\]
So define $h_n$, with increments only on the set $\{n_i\}_{i \in \mathbb{N}}$, satisfying $h_{n_i} := c_i$.

Similarly, under Assumption \ref{assumpt}, we will additionally require that $h_n$ satisfies
\begin{align}
\label{eq:decay2}
\lim_{n \to \infty} \frac{L( L^{-1}(n / h_n) / (n  h_n) )  }{L(L^{-1}(n/ h_n))}  = \lim_{n \to \infty} \frac{L( L^{-1}(n / h_n)  \, n \,  h_n) }{L(L^{-1}(n/ h_n))}  =  1,
\end{align}
which is again guaranteed to exist under Assumption \ref{assumpt} by analogous reasoning.

\subsection{Extremal processes associated to the BTM}
The first step is to convert our results on general extremal processes stated in Section \ref{sec:prelim} into equivalent results for the extremal processes associated with the random walk $X$.

Let $\mathcal{J}^X$ be the jump-set associated to $M^X$
\[ \mathcal{J}^X := \{n : M^X_{n} \neq M^X_{n^-} \}  \subseteq \mathbb{N}.\]
Abbreviating $N^X_n := |\mathcal{J}^X \cap (\delta n^2, \ceil{T n^2}  ]  |$, let $(j^n_i)_{1 \le i \le N^X_n }$ be the elements of $\mathcal{J}^X \cap (\delta n^2, \ceil{T n^2}  ]$ arranged in increasing order, set
\[j^n_{N^X_n +1} := \min \{i > \ceil{n^2 T} : i \in \mathcal{J}^X \},\]
and write $J^n:=\{j_i^n:\:i=1,\dots,N^X_n +1\}$.

\begin{proposition}[Jump-set spacing for $M^X$]
\label{prop:jumpsX}
As $n \to \infty$,
\[  N^X_n  \qquad \text{and} \qquad  \frac{{\rm{sep}} \left( J^n \right)}{n^2} \]
are respectively bounded above and bounded below in $\mathbb{P}_0$-probability.
\end{proposition}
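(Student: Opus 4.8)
The plan is to reduce the claim to the general extremal-process estimates of Proposition~\ref{prop:jumps} by re-indexing the jumps of $M^X$ according to the order in which $S$ discovers new sites, and then to translate between this ``number of distinct sites'' scale and the time scale using the diffusive behaviour of $S$ from Proposition~\ref{prop:diffdist}.

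For the re-indexing, let $\sigma_k$ be the time at which $S$ visits its $k$-th distinct site, put $x_k := S_{\sigma_k}$ and $E_k := \tau_{x_k}$, and write $D(n) := 1 + \max_{i \le n} S_i - \min_{i \le n} S_i$ for the number of distinct sites visited by time $n$. As $S$ is skip-free, its range up to time $n$ is exactly $\{x_1, \dots, x_{D(n)}\}$, so $M^X_n = \max\{E_k : k \le D(n)\}$, and hence $M^X$ jumps precisely when $D$ increases to a record index of the sequence $E = (E_k)_{k \ge 1}$; that is, $\mathcal{J}^X = \{\sigma_k : k \in \mathcal{J}_E\}$, where $\mathcal{J}_E$ denotes the record set of $E$ (playing the role of $\mathcal{J}$ in Proposition~\ref{prop:jumps}). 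Since $\tau$ is i.i.d.\ and independent of $S$ while the sites $x_k$ are distinct, $E$ is i.i.d.\ with the law of $\tau_0$, and the associated tail inverse is $L$, which is non-decreasing and right-continuous; so Proposition~\ref{prop:jumps} applies to $E$. Under this identification $N^X_n = |\mathcal{J}_E \cap (D(\delta n^2), D(\lceil T n^2 \rceil)]|$ (up to an irrelevant ambiguity at the endpoints), and $J^n = \{\sigma_k : k \in \mathcal{J}_E,\ D(\delta n^2) < k \le k^\ast\}$, where $k^\ast$ is the first record index of $E$ strictly exceeding $D(\lceil T n^2 \rceil)$.

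Now fix $\varepsilon > 0$. By Proposition~\ref{prop:diffdist} there is a constant $C$ so that the event $\{n/C \le D(\delta n^2) \le D(\lceil T n^2 \rceil) \le C n\}$ has $\mathbb{P}_0$-probability at least $1 - \varepsilon$ for all large $n$; on this event $N^X_n \le |\mathcal{J}_E \cap (n/C, Cn]|$, which is bounded above in probability by Proposition~\ref{prop:jumps}, giving the first assertion. For the second, I would first argue that, with probability tending to one, every pair of consecutive records of $E$ that indexes two consecutive points of $J^n$ is separated by at least $cn$, for some fixed $c > 0$. For pairs lying inside $(n/C, Cn]$ this is immediate from the separation bound of Proposition~\ref{prop:jumps}. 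The only remaining pair is the one straddling $D(\lceil T n^2 \rceil)$, namely the largest record $k'$ not exceeding it and the first record $k^\ast$ above it; this is relevant only when $N^X_n \ge 1$ (when $N^X_n = 0$ the set $J^n$ is a singleton and $\mathrm{sep}(J^n) = +\infty$), and it is handled by splitting on whether $k^\ast \le C^2 n$, in which case $k'$ and $k^\ast$ both lie in a window of the form $(n/C', C'n]$ and Proposition~\ref{prop:jumps} applies once more, or $k^\ast > C^2 n$, in which case $k^\ast - k' > C^2 n - Cn \ge cn$ directly.

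It then remains to upgrade ``$k_{a+1} - k_a \ge cn$'' to ``$j^n_{a+1} - j^n_a \ge \eta n^2$''. If $k_{a+1} - k_a \ge cn$, then over the time interval $[j^n_a, j^n_{a+1}] = [\sigma_{k_a}, \sigma_{k_{a+1}}]$ the range of $S$ increases by at least $cn - 1$, and since the range increases by at most $2 \max_{0 \le r \le u} |S_{s+r} - S_s|$ over any time interval $[s, s+u]$, the walk must travel a distance at least $(cn-1)/2$ from its position at time $j^n_a$ within time $j^n_{a+1} - j^n_a$. Each $j^n_a$ is a stopping time for the filtration generated by $S$ together with the values of $\tau$ at the sites visited so far, for which $S$ retains the strong Markov property, so by the strong Markov property and Donsker's theorem (with the diffusive rescaling used in Proposition~\ref{prop:diffdist}) the quantity $(j^n_{a+1} - j^n_a)/n^2$ is bounded below in probability, uniformly in $a$; as $N^X_n$ is already known to be bounded above in probability, there are only $O(1)$ such pairs, and combining with the previous paragraph yields that $\mathrm{sep}(J^n)/n^2$ is bounded below in probability. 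I expect this last conversion to be the main obstacle, requiring care that the times $j^n_a$ really are stopping times and that the resulting diffusive lower bound is uniform in $a$; one could alternatively bypass the use of stopping times by controlling the modulus of continuity of $S$ over $[0, (T+1)n^2]$, since if $j^n_{a+1} - j^n_a \le \eta n^2$ for a relevant pair then both endpoints lie in $[0, (T+1)n^2]$ and a displacement of order $n$ over a time of order $\eta n^2$ there is a modulus-of-continuity event of vanishing probability.
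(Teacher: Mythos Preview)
Your proof is correct and follows essentially the same route as the paper: both reindex the traps in the order $S$ visits new sites to obtain an i.i.d.\ sequence $E$, invoke Proposition~\ref{prop:diffdist} to trap $D(\delta n^2)$ and $D(\lceil Tn^2\rceil)$ in a window $(n/C,Cn]$, apply Proposition~\ref{prop:jumps} to control the number and spacing of records of $E$ in that window, and handle the extra record just beyond the window by enlarging $C$ and using the trivial lower bound when it lies far out. The only place you are more explicit than the paper is the conversion from a gap of order $n$ in the record indices to a gap of order $n^2$ in the jump times: the paper compresses this into the single line ``${\rm sep}(J^n)\ge {\rm sep}(\{k_i\})^2/C$ with high probability, by Proposition~\ref{prop:diffdist}'', whereas you spell out the displacement bound $\max_{r\le j^n_{a+1}-j^n_a}|S_{j^n_a+r}-S_{j^n_a}|\ge (k_{a+1}-k_a)/2$ and then invoke either the strong Markov property at the stopping times $j^n_a$ or a modulus-of-continuity estimate for $S$ on $[0,(T+1)n^2]$. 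Both of your justifications are valid (the $j^n_a$ are indeed stopping times for the natural filtration of $S$ together with the revealed traps, and $S$ remains a SRW under this filtration), and your modulus-of-continuity alternative is arguably the cleanest way to make the paper's terse step rigorous.
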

\begin{proof}
Let $E = (E_i)_{i \in \mathbb{N}}$ be the sequence given by rearranging the elements of the trapping landscape $(\tau_x)_{x\in\mathbb{Z}}$ into the order that the relevant sites are visited by $S$, and let $\mathcal{J}$ be defined as in Section \ref{subsec:extremal} for the sequence $E$. Further denote by $(k_i)_{i\geq 1}$ the ordered list of elements in $\mathcal{J} \cap (n/C, \infty)$ and abbreviate $K_n := |\mathcal{J} \cap (n/C, nC]|$. Let $d_n$ be defined as in Section \ref{subsec:srw} for the simple random walk $S$. Note that, by Proposition~\ref{prop:diffdist}, for any $\varepsilon > 0$ there exists a $C > 0$ such that
\[ \mathbb{P}_0 \left(  d_{\delta n^2} > n/C \quad \text{and} \quad d_{ \ceil{Tn^2} } < n C \right) > 1 - \varepsilon,\]
which implies that
\[ \mathbb{P}_0 \left( N^X_n \le  \left| \mathcal{J} \cap (n/C, n C] \right|  \right) > 1 - \varepsilon.\]
Moreover, under $\mathbb{P}_0$, the sequence $E$ is i.i.d.\ with common distribution $\tau_0$, and is independent of $S$. Hence we can apply Proposition~\ref{prop:jumps} to bound
\[  | \mathcal{J} \cap (n/C, n C] |   \]
above in $\mathbb{P}_0$-probability, which proves the first result. Similarly, from Proposition~\ref{prop:diffdist} and the definition of ${\rm{sep}}(\cdot)$, it is possible to deduce that, for any $\varepsilon > 0$, there exists a $C > 0$ such that
\[  \mathbb{P}_0 \left(  {\rm{sep}}(J^n) \geq  {\rm{sep}}( \{ k_i : i = 1, 2, \ldots, K_n + 1 \})^2/ C \right)> 1 - \varepsilon.\]
Using the fact that $k_{K_n} \le nC$ and that $k_{K_n + 1}$ is either in $(nC, n(C+1)]$ or in $(n(C+1), \infty)$, we have the trivial bound
\[{\rm{sep}}( \{k_i^n:\:i=1,\dots,K_n+1\}) \geq \min\left\{{\rm{sep}}(  \mathcal{J} \cap (n/(C+1), n(C+1)]),n \right\},\]
and so Proposition~\ref{prop:jumps} applied to $\mathcal{J} \cap (n/(C+1), N(C+1)]$ gives the result.
\end{proof}

\begin{proposition}[Local time at deepest-visited traps]
\label{prop:jumplocaltime}
For each $1 \le i \le N^X_n$, let $\nu^i(k, 0)$ be defined similarly to $\nu(k, 0)$ in Section \ref{subsec:srw} for the simple random walk $(S_{k + j^n_i}-S_{j^n_i})_{k \in \mathbb{N}}$. Then, as $n \to \infty$,
\begin{align}
\label{eq:jumplocaltime1}
 \mathbb{P}_0\left( \nu^i(j^n_{i + 1} - j^n_i - 1, 0) > n / h_n \quad \text{for all } 1 \le i \le N^X_n \right)  \to 1
 \end{align}
and
\begin{align}
\label{eq:jumplocaltime2}
\mathbb{P}_0\left( \nu^i (\delta n^2 - 1, 0) > n / h_n \quad \text{for all } 1 \le i \le N^X_n \right) \to 1.
\end{align}
\end{proposition}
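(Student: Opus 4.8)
The plan is to deduce both \eqref{eq:jumplocaltime1} and \eqref{eq:jumplocaltime2} from the single estimate that, for every fixed $c>0$,
\begin{equation}\label{eq:keyloc}
\mathbb{P}_0\bigl( \nu^i(\floor{cn^2},0)\le n/h_n \text{ for some } 1\le i\le N^X_n \bigr)\longrightarrow 0\qquad\text{as } n\to\infty.
\end{equation}
Since each map $k\mapsto\nu^i(k,0)$ is non-decreasing, \eqref{eq:keyloc} applied with any $c<\delta$ gives \eqref{eq:jumplocaltime2} (because $\delta n^2-1\ge\floor{cn^2}$ for all large $n$). For \eqref{eq:jumplocaltime1}, fix $\varepsilon>0$ and use the separation bound of Proposition~\ref{prop:jumpsX} to pick $b>0$ with $\mathbb{P}_0({\rm{sep}}(J^n)<bn^2)<\varepsilon$ for all large $n$; taking $c<b$, on the complementary event each gap satisfies $j^n_{i+1}-j^n_i-1\ge bn^2-1\ge\floor{cn^2}$ for all large $n$, whence $\nu^i(j^n_{i+1}-j^n_i-1,0)\ge\nu^i(\floor{cn^2},0)$ by monotonicity, and \eqref{eq:jumplocaltime1} follows from \eqref{eq:keyloc} on letting $\varepsilon\to0$.

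To prove \eqref{eq:keyloc}, fix $\varepsilon>0$. By the first conclusion of Proposition~\ref{prop:jumpsX} there is an integer $K$ with $\mathbb{P}_0(N^X_n>K)<\varepsilon$ for all large $n$. Let $r_1<r_2<\cdots$ be the increasing enumeration of $\mathcal{J}^X\cap(\delta n^2,\infty)$; each $r_i$ is a stopping time for the filtration $(\sigma(\tau,S_0,\dots,S_k))_{k\ge0}$, and is $\mathbb{P}_0$-a.s.\ finite because, for $\PP$-a.e.\ trapping landscape, $\tau$ is unbounded and the recurrent walk $S$ visits every site, so $M^X$ jumps infinitely often. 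On the event $\{N^X_n\le K\}$ one has $j^n_i=r_i$ for all $i\le N^X_n$, so, writing $\nu^{(i)}(k,0)$ for the quantity defined like $\nu(k,0)$ but for the walk $(S_{k+r_i}-S_{r_i})_{k\in\mathbb{N}}$,
\[
\mathbb{P}_0\bigl( \nu^i(\floor{cn^2},0)\le n/h_n \text{ for some } i\le N^X_n \bigr)\le\varepsilon+\sum_{i=1}^{K}\mathbb{P}_0\bigl( \nu^{(i)}(\floor{cn^2},0)\le n/h_n \bigr).
\]
Since $r_i$ is $(S,\tau)$-measurable and the holding times $(\xi_k)_{k\in\mathbb{N}}$ are independent of $(S,\tau)$, the strong Markov property shows that $\nu^{(i)}(\floor{cn^2},0)$ has the same law as the CTSRW local time $\nu(\floor{cn^2},0)$; by Proposition~\ref{prop:ctslocaltime} this is bounded below in probability after division by $n$ (it differs from $\nu(\floor{cn^2},0)/\sqrt{\floor{cn^2}}$ by a factor converging to $\sqrt{c}$), and since $h_n\to\infty$ each of the $K$ summands tends to $0$. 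Letting $n\to\infty$ and then $\varepsilon\to0$ gives \eqref{eq:keyloc}.

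The only genuinely delicate point is the dependence structure: the jump times $j^n_i$ and their number $N^X_n$ are not stopping times for $S$ — they are read off from the entire range of $S$ up to time $\ceil{Tn^2}$, and, for $j^n_{N^X_n+1}$, from beyond it — so one cannot condition on $\{N^X_n\le K\}$ or $\{{\rm{sep}}(J^n)\ge bn^2\}$ and still apply the Markov property at the times $j^n_i$. The remedy, as above, is to peel these high-probability events off first (via Proposition~\ref{prop:jumpsX}) and then argue with the fixed-index stopping times $r_i$, $i\le K$, for which the strong Markov property applies cleanly; the rest is a union bound together with the elementary fact that a CTSRW accrues local time of order $n$ at a fixed site over $\Theta(n^2)$ steps, which dominates $n/h_n$.
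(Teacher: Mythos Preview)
Your proof is correct and follows essentially the same route as the paper's: both rely on (i) the strong Markov property to identify the law of $\nu^i(\cdot,0)$ with that of $\nu(\cdot,0)$, (ii) Proposition~\ref{prop:ctslocaltime} to guarantee the local time at the origin over order-$n^2$ steps dominates $n/h_n$, (iii) a union bound controlled by the boundedness of $N^X_n$ from Proposition~\ref{prop:jumpsX}, and (iv) the separation bound from Proposition~\ref{prop:jumpsX} together with monotonicity of $\nu^i(\cdot,0)$ to pass from a fixed time scale to the random gaps $j^n_{i+1}-j^n_i-1$. The only cosmetic differences are that the paper uses the time scale $n^2/h_n$ where you use $\lfloor cn^2\rfloor$, and that you are more explicit about the stopping-time issue---introducing the unrestricted jump times $r_i$ to sidestep any worry about conditioning on $\{N^X_n\le K\}$---whereas the paper simply asserts that each $j^n_i$ is a stopping time (which amounts to the same thing, since $j^n_i=r_i$ on $\{N^X_n\ge i\}$).
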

\begin{proof}
By the time-homogeneity of a SRW and the fact that $j_i^n$ is a stopping time for each $i$,
\[ \nu^i (n, 0) \stackrel{d}{=} \nu(n, 0),\]
and so it follows from Proposition~\ref{prop:ctslocaltime} that, as $n \to \infty$,
\[  \mathbb{P}_0\left( \nu^i (n^2 / h_n , 0) > n / h_n \right) = \mathbb{P}_0 \left( \nu (n^2 / h_n , 0)  > n / h_n \right)  \to 1. \]
Since, by Proposition~\ref{prop:jumpsX}, $N^X_n$ is bounded above in probability, it follows that
\[ \mathbb{P}_0\left( \nu^i (n^2 / h_n , 0) > n / h_n \quad \text{for each } 1 \le i \le N^X_n \right)  \to 1. \]
This is sufficient to establish equation \eqref{eq:jumplocaltime2} since $\nu^i (\cdot , 0)$ is non-decreasing. For equation \eqref{eq:jumplocaltime1}, simply apply the second part of Proposition~\ref{prop:jumpsX}, since $j^n_{i + 1} - j_i^n \ge {\rm{sep}}(J^n)$ for each $1\le i \le N^X_n$.
\end{proof}

In what follows, we make use of the product space $D(\mathbb{R}^+) \times D(\mathbb{R}^+)$. For a sequence of probability measures on $D(\mathbb{R}^+) \times D(\mathbb{R}^+)$, we denote by
\[\stackrel{J_1/J_1}{\Rightarrow}  \quad \text{and} \quad \stackrel{M_1/J_1}{\Rightarrow} \]
weak convergence of the first component in the $J_1$ and $M_1$ topologies respectively, and the simultaneous weak convergence of the second component in the $J_1$ topology.

\begin{proposition}[Functional limit theorem for the $S$-explored extremal and sum processes]
\label{prop:j1convX} Under $\mathbb{P}_0$, as $n \to \infty$,
\[\left(\frac{1}{n} L (M^X_{n^2 t}),\frac{1}{n} S_{n^2 t} \right)_{t\geq 0} \stackrel{J_1/J_1}{\Rightarrow} \left(m^B_t,B_t\right)_{t\geq 0},\]
and
\[\left( \frac{1}{n} L (\Sigma^X_{n^2 t} ),\frac{1}{n} S_{n^2 t} \right)_{t\geq 0} \stackrel{J_1/J_1}{\Rightarrow} \left(m^B_t,B_t\right)_{t\geq 0} \]
in distribution.
\end{proposition}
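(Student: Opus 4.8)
\emph{Plan.} The key structural observation is that $M^X_n$ and $\Sigma^X_n$ depend on $S$ only through the size of its range: since a SRW on $\mathbb{Z}$ visits every site between its running minimum and maximum, at step $n$ the set of sites explored by $S$ is exactly the interval of length $d_n+1$, where $d_n$ is the diffusion distance of $S$ as in Section~\ref{subsec:srw}. Enumerating the trapping landscape $(\tau_x)_{x\in\mathbb{Z}}$ in the order the sites are visited by $S$ (exactly as in the proof of Proposition~\ref{prop:jumpsX}) gives a sequence $E=(E_j)_j$ that, under $\mathbb{P}_0$, is i.i.d.\ with the law of $\tau_0$ and independent of $S$; writing $M,\Sigma$ for its extremal and sum processes as in Section~\ref{subsec:extremal}, one has the exact identities $M^X_n = M_{d_n+1}$ and $\Sigma^X_n = \Sigma_{d_n+1}$. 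The plan is therefore to combine the extremal/sum process limit of Proposition~\ref{prop:j1conv} with Donsker's theorem via a composition argument, and then to identify the resulting limit with $(m^B,B)$.

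\emph{Assembling the ingredients.} Since $E$ is i.i.d.\ with slowly varying tail $L$, Proposition~\ref{prop:j1conv} gives $(n^{-1}L(M_{n\,\cdot}))\stackrel{J_1}{\Rightarrow} m$ and $(n^{-1}L(\Sigma_{n\,\cdot}))\stackrel{J_1}{\Rightarrow} m$, where $m$ is the extremal process for $\mathcal{P}^+$. By Donsker's theorem $(n^{-1}S_{n^2\,\cdot})\Rightarrow B$ uniformly on compacts, and since the running-supremum and running-infimum functionals are continuous for uniform convergence, the continuous mapping theorem yields $(n^{-1}(d_{n^2\,\cdot}+1))\Rightarrow D$ jointly with $(n^{-1}S_{n^2\,\cdot})\Rightarrow B$, where $D_t := \sup_{s\le t}B_s - \inf_{s\le t}B_s$ is continuous, non-decreasing and null at $0$. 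As $E$ is independent of $S$, the triple converges jointly, $(n^{-1}L(M_{n\,\cdot}),\,n^{-1}(d_{n^2\,\cdot}+1),\,n^{-1}S_{n^2\,\cdot})\Rightarrow (m,D,B)$ with $m$ independent of $(D,B)$, the first coordinate in $J_1$ and the others in the uniform topology.

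\emph{Composition and identification.} Using the identity $M^X_n = M_{d_n+1}$ one writes $n^{-1}L(M^X_{n^2\,\cdot}) = \big(n^{-1}L(M_{n\,\cdot})\big)\circ\big(n^{-1}(d_{n^2\,\cdot}+1)\big)$, and applies a composition-continuity result for the $J_1$ topology (of the kind collected in Appendix~\ref{sec:appendix}; cf.\ also \cite{Whitt02}), together with the continuous mapping theorem for the map $(f,g,w)\mapsto(f\circ g,w)$, to conclude that $(n^{-1}L(M^X_{n^2\,\cdot}),\,n^{-1}S_{n^2\,\cdot})\stackrel{J_1/J_1}{\Rightarrow}(m\circ D,B)$, and likewise with $\Sigma^X$ in place of $M^X$. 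It then remains to identify $(m\circ D,B)$ in law with $(m^B,B)$. Conditionally on $B$, the process $m^B$ of \eqref{mtdef} is the running maximum of the $v$-marks of those points of $\mathcal{P}$ lying in the spatial window $W_t:=[\inf_{s\le t}B_s,\sup_{s\le t}B_s]$, which has length $D_t$ and grows continuously. Reparametrising each point $(x_i,v_i)$ of $\mathcal{P}$ as $(e(x_i),v_i)$, where $e(x):=D_{\tau(x)}$ is the window size at the instant $\tau(x)$ that $x$ first enters $W$, the elementary computation $|\{x:e(x)\le a\}| = a$ shows that $e$ pushes Lebesgue measure on $\mathbb{R}$ forward to Lebesgue measure on $\mathbb{R}^+$; hence $(e(x_i),v_i)_i$ is, conditionally on $B$, a Poisson process with intensity $v^{-2}\,da\,dv$ on $\mathbb{R}^+\times\mathbb{R}^+$, i.e.\ distributed as $\mathcal{P}^+$, and consequently $m^B_t = m_{D_t}$ in law, jointly with $B$.

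\emph{Main obstacle.} The remaining ingredients are either classical (Donsker) or in hand (Proposition~\ref{prop:j1conv}), so the real work is in the composition step. First, one must justify $J_1$-continuity of the composition even though the inner limit $D$ has flat stretches; this is eased by the fact that both $n^{-1}(d_{n^2\,\cdot}+1)$ and $n^{-1}L(M_{n\,\cdot})$ are non-decreasing, so no oscillations can appear, but one still needs that almost surely no jump of $m$ occurs at a level where $D$ is locally constant. This holds because the (countable) set of jump levels of $m$ and the (countable) set of flat levels of $D$ are independent and, since $\mathbb{P}(m\text{ jumps at }\ell)=0$ for every fixed $\ell$, almost surely disjoint. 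Second, one must verify carefully the change-of-variables identification of $m\circ D$ with $m^B$ sketched above (including that the two-sided growth of $W_t$ does not affect a maximum). These two points — the composition-continuity with flat inner limit, and the reparametrisation identity — are where essentially all of the remaining effort lies.
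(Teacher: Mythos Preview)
Your proposal follows essentially the same route as the paper: enumerate the traps in visitation order so that $M^X_n = M_{d_n+1}$ and $\Sigma^X_n = \Sigma_{d_n+1}$, combine Proposition~\ref{prop:j1conv} with Donsker, compose, and then identify the limit $(m_{d^B_\cdot}, B)$ with $(m^B, B)$ via the spatial homogeneity of $\mathcal{P}$ (your push-forward argument being a concrete version of the paper's finite-dimensional-distribution check at \eqref{aa3}). One point where you are in fact more careful than the paper: the paper invokes Lemma~\ref{lem:comp} for the composition step, but as stated that lemma requires the \emph{outer} limit to be continuous, which $m$ is not; your observation that it suffices that no jump of $m$ lands on a flat level of $d^B$ --- true almost surely by independence and the countability of the flat levels --- is exactly the extra condition needed to apply the standard $J_1$ composition result (e.g.\ \cite[Theorem~13.2.2]{Whitt02}) in this situation.
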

\begin{proof} Let $E = (E_i)_{i \in \mathbb{N}}$ be the sequence defined in the proof of Proposition~\ref{prop:jumpsX}. Define $d_n$ as in Section \ref{subsec:srw} for the simple random walk $S$, and also define the equivalent diffusion distance for the standard Brownian motion
\begin{equation}\label{dbtdef}
d^B_t:=\sup_{s\leq t}B_s-\inf_{s\leq t}B_s.
\end{equation}
Combining Proposition~\ref{prop:j1conv} and Donsker's invariance principle, we have under $\mathbb{P}_0$ that
\begin{equation}\label{previous}
\left( \frac{1}{n} L (M_{n t }), \frac{1}{n} S_{n^2 t}\right)_{t\geq 0} \stackrel{J_1/J_1}{\Rightarrow}\left( m_t,B_t\right)_{t\geq 0}
\end{equation}
in distribution, where $M_n$ and $m_t$ are defined as in Section \ref{subsec:extremal}. Note that, using the continuous mapping theorem (and the fact that $B$ is continuous almost-surely), we also have that $(n^{-1}d_{n^2t})_{t\geq 0}$ converges in distribution to $(d^B_t)_{t\geq 0}$ (in the $J_1$ topology). Together with the composition result of Lemma~\ref{lem:comp}, it follows that, under $\mathbb{P}_0$,
\begin{align}\label{aa1}
 \left(\frac{1}{n} L \left(M_{d_{n^2 t}}\right)\right)_{t\geq 0}
\stackrel{J_1}{\Rightarrow} \left(m_{d^B_t}\right)_{t\geq 0}
\end{align}
in distribution (simultaneously with the convergence at (\ref{previous})). Now, it is straightforward to check from the construction of the relevant processes that
\begin{equation}\label{aa2}
 \left(\frac{1}{n} L (M^X_{n^2 t} ),\frac{1}{n} S_{n^2 t}\right)_{t\geq 0}
\stackrel{d}{=}  \left(\frac{1}{n} L \left(M_{d_{n^2 t}}\right),\frac{1}{n} S_{n^2 t}\right)_{t\geq 0} .
\end{equation}
Moreover, we have that
\begin{equation}\label{aa3}
 \left(m_{d^B_t},B_t\right)_{t\geq 0}
\stackrel{d}{=}  (m^B_t,B_t)_{t \ge 0}.
\end{equation}
Indeed, by conditioning on $B$ and applying the spatial homogeneity of the underlying point process, checking that the finite dimensional distributions of the two above processes agree is easy, and (\ref{aa3}) follows readily from this. Putting (\ref{aa1}), (\ref{aa2}) and (\ref{aa3}) together completes the proof of the first claim of the proposition. The proof of the second claim is similar.
\end{proof}

\begin{corollary}[Lower bound for the $S$-explored extremal and sum processes]
\label{cor:maxX}
As $n \to \infty$,
\[ \mathbb{P}_0\left(   M^X_{n^2 \delta } \ge L^{-1}(n  / h_n)  \right) \to 1 \quad \text{and} \quad \mathbb{P}_0\left(   \Sigma^X_{n^2 \delta } \ge L^{-1}(n  / h_n)  \right) \to 1.\]
\end{corollary}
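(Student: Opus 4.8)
The plan is to extract the corollary from the one-dimensional marginal, at the fixed time $\delta$, of the functional convergence established in Proposition~\ref{prop:j1convX}. I will prove the statement for $M^X$; that for $\Sigma^X$ then comes for free, since the $\tau_x$ are strictly positive and so $\Sigma^X_n\ge M^X_n$ for every $n$, whence $\{M^X_{n^2\delta}\ge L^{-1}(n/h_n)\}\subseteq\{\Sigma^X_{n^2\delta}\ge L^{-1}(n/h_n)\}$ (equivalently, one may simply repeat the argument below using the sum version of Proposition~\ref{prop:j1convX}).

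The key steps, in order, are as follows. First, since a fixed deterministic time $\delta>0$ is almost-surely a continuity point of the extremal process $t\mapsto m^B_t$ (the jump times of $m^B$ are non-atomic; if one prefers, one may replace $\delta$ by a slightly smaller continuity point and invoke the monotonicity of $n\mapsto M^X_n$ and of $L$), Proposition~\ref{prop:j1convX} gives that $\frac1n L(M^X_{n^2\delta})$ converges in distribution, under $\mathbb{P}_0$, to $m^B_\delta$. Second, I would verify that $m^B_\delta>0$ almost-surely: conditional on $B$, the explored interval $[\inf_{s\le\delta}B_s,\sup_{s\le\delta}B_s]$ is non-degenerate a.s., and the point process $\mathcal{P}$, whose intensity $v^{-2}\,dx\,dv$ assigns infinite mass to the corresponding strip, a.s.\ places (in fact infinitely many) points there. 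Consequently $\frac1n L(M^X_{n^2\delta})$ is bounded below in probability, so that for every $\varepsilon>0$ there is $c=c(\varepsilon)>0$ with $m^B_\delta>c$ holding with probability greater than $1-\varepsilon$, and the portmanteau theorem then gives $\liminf_{n\to\infty}\mathbb{P}_0( L(M^X_{n^2\delta})>cn)\ge1-\varepsilon$. Third, I would convert this into the bound claimed: since $h_n\to\infty$ we have $n/h_n\le cn$ for all large $n$, and since $L^{-1}$ is non-decreasing, on the event $\{L(M^X_{n^2\delta})>cn\}$ the definition $L^{-1}(x)=\inf\{u:L(u)>x\}$ forces $M^X_{n^2\delta}\ge L^{-1}(cn)\ge L^{-1}(n/h_n)$. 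Hence $\liminf_n\mathbb{P}_0( M^X_{n^2\delta}\ge L^{-1}(n/h_n))\ge1-\varepsilon$, and letting $\varepsilon\downarrow0$ completes the proof.

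I do not anticipate a genuine obstacle; the corollary is essentially a quantitative repackaging of Proposition~\ref{prop:j1convX}. The two mildly delicate points are the continuity-point remark (circumvented by the monotonicity reduction) and correctly orienting the inequality relating $L$ to its right-continuous inverse — one needs $L(u)>x\Rightarrow u\ge L^{-1}(x)$, which relies on the strict inequality in the definition of $L^{-1}$ — while the slowly growing auxiliary sequence $h_n$ enters only through the fact that $n/h_n=o(n)$, so no quantitative information about $h_n$ is required. As an alternative avoiding the limit process altogether, one could instead bound $\mathbb{P}_0(M^X_{n^2\delta}<L^{-1}(n/h_n))$ directly: this event forces all of the $\asymp n$ traps in the range of $S$ up to time $n^2\delta$ (cf.\ Proposition~\ref{prop:diffdist}) to lie below $L^{-1}(n/h_n)$, an event which, using $L(L^{-1}(x))\sim x$ as $x\to\infty$, has probability of order $(1-h_n/n)^{cn}\to0$.
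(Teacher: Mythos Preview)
Your proposal is correct and follows essentially the same route as the paper: both arguments read off from Proposition~\ref{prop:j1convX} that $n^{-1}L(M^X_{n^2\delta})$ converges in law to the strictly positive variable $m^B_\delta$, hence eventually exceeds the vanishing threshold $1/h_n$, and then invoke the defining property of $L^{-1}$ to deduce $M^X_{n^2\delta}\ge L^{-1}(n/h_n)$. The paper's proof is simply the two-line version of your argument (it passes directly to the threshold $1/h_n$ rather than via an intermediate constant $c$), and your remarks on continuity points, the $L/L^{-1}$ inequality, and the reduction of the $\Sigma^X$ case to the $M^X$ case are all correct elaborations.
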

\begin{proof}
By the existence of the scaling limit, as $n \to \infty$,
\[ \mathbb{P}_0\left(  \frac{1}{n} L( M^X_{n^2 \delta } ) > 1/h_n  \right) \to 1 \quad \text{and} \quad \mathbb{P}_0\left(   \frac{1}{n} L ( \Sigma^X_{n^2 \delta } ) > 1/h_n  \right) \to 1 \]
both hold. The result then follows by the definition of $L^{-1}$.
\end{proof}

\subsection{Squeezing the clock process}
The next step is to show that, under suitable rescaling, the clock process $A$ is squeezed (with high probability) between the extremal and sum processes $M^X$ and $\Sigma^X$; the squeezing is done in both time and space.

\begin{proposition}
As $n \to \infty$,
\begin{align}
\label{eq:bounds1}
\mathbb{P}_0 \left( \frac{1}{n} A_{n^2 t} < \Sigma^X_{n^2 t} \, h_n   \quad \text{for all } t \in [\delta, T] \,  \right) \to 1.
\end{align}
Moreover, for each $t \in [\delta, T]$ and $n \ge 0$ there exists a $\mathbb{P}_0$-measurable random time $s^n_t \in [t, t + \delta]$ such that, as $n \to \infty$,
\begin{align}
\label{eq:bounds2}
\mathbb{P}_0 \left(  \frac{1}{n} A_{n^2 s^n_t} > M^X_{n^2 s^n_t} / h_n \quad \text{for all } t \in [\delta, T] \, \right)\to 1.
\end{align}
\end{proposition}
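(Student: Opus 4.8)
The plan is to relate the clock process to local times via the elementary identity $A_{n^2 t} = \sum_{x \in \mathbb{Z}} \tau_x\, \nu(n^2 t, x)$, where $\nu(\cdot,x)$ denotes the local time of $S$ with respect to the holding times $(\xi_i)_{i \in \mathbb{N}}$ (which, jointly with $S$, has the same law as the quantity of Section~\ref{subsec:srw}), and then to squeeze $A$ between $M^X$ and $\Sigma^X$ by bounding the local times from above and below. The upper bound \eqref{eq:bounds1} is the easy half: since $\nu(n^2 t, x) \le \nu_{\text{max}}(n^2 t) \le \nu_{\text{max}}(n^2 T)$ for every $x$, and $\{x : \nu(n^2 t, x) > 0\}$ is exactly the range of $S$ up to step $\lfloor n^2 t \rfloor$, one has $A_{n^2 t} \le \nu_{\text{max}}(n^2 T)\, \Sigma^X_{n^2 t}$ for all $t \le T$ simultaneously; Proposition~\ref{prop:ctslocaltime} makes $n^{-1}\nu_{\text{max}}(n^2 T)$ bounded above in probability, so $h_n \to \infty$ gives $\mathbb{P}_0(\nu_{\text{max}}(n^2 T) < n h_n) \to 1$, which suffices.

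For \eqref{eq:bounds2} the idea is to choose $s^n_t$ so that at time $n^2 s^n_t$ the running maximum is still attained at the same site $y$ as at time $n^2 t$, while the walk has by then accumulated local time exceeding $n/h_n$ at $y$; this yields $A_{n^2 s^n_t} \ge \tau_y\, \nu(n^2 s^n_t, y) > M^X_{n^2 s^n_t}\, n/h_n$. Concretely, I would let $y = y(t)$ attain $M^X_{n^2 t}$, let $\rho = \rho(t) \le n^2 t$ be the jump time of $M^X$ at which $y$ became the record site (so $S_\rho = y$), and let $\rho'$ be the following jump time of $M^X$ (so $M^X \equiv \tau_y$ on $[\rho, \rho')$ and $n^2 t \in [\rho, \rho')$), and set $n^2 s^n_t := \max\{n^2 t,\ \min\{\rho', n^2(t+\delta)\} - 1\}$. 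One checks directly that $s^n_t \in [t, t+\delta]$ and that $M^X_{n^2 s^n_t} = \tau_y$, so it only remains to bound from below the local time at $y$ accrued over $[\rho, n^2 s^n_t]$. If $\rho' \le n^2(t+\delta)$, then $n^2 s^n_t \ge \rho' - 1$, and writing $\rho = j^n_i$, $\rho' = j^n_{i+1}$ for the appropriate $i \le N^X_n$ this local time is at least $\nu^i(j^n_{i+1} - j^n_i - 1, 0)$, which exceeds $n/h_n$ with high probability by \eqref{eq:jumplocaltime1}; if $\rho' > n^2(t+\delta)$, then $n^2 s^n_t = n^2(t+\delta) - 1 \ge \rho + \delta n^2 - 1$ (for $n$ large), so the relevant local time is at least $\nu^i(\delta n^2 - 1, 0) > n/h_n$ with high probability by \eqref{eq:jumplocaltime2}. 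Since $N^X_n$ is tight (Proposition~\ref{prop:jumpsX}) and the events in \eqref{eq:jumplocaltime1}--\eqref{eq:jumplocaltime2} hold simultaneously over all $i \le N^X_n$ with high probability, the estimate then holds for all $t \in [\delta, T]$ at once.

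I expect the construction of $s^n_t$ to be the main obstacle: one must balance the requirement that $s^n_t$ not overshoot the next jump time of $M^X$ (so that the running maximum does not increase, keeping the right-hand side of \eqref{eq:bounds2} controlled) against the need to wait long enough for the local time at the record site to reach $n/h_n$, and it is exactly this tension that forces the use of both parts of Proposition~\ref{prop:jumplocaltime} — and hence of both parts of Proposition~\ref{prop:jumpsX}. Two secondary points need attention. First, the boundary case in which $M^X_{n^2 t}$ is attained at a site first recorded at or before time $\delta n^2$ (so that $\rho \le \delta n^2$ and Proposition~\ref{prop:jumplocaltime} does not literally apply) should be handled by the obvious extension of Propositions~\ref{prop:jumpsX} and~\ref{prop:jumplocaltime} to also control the last jump time of $M^X$ preceding $\delta n^2$, its spacing from $j^n_1$, and the local time accrued at the corresponding site; the proofs are identical to those already given. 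Second, passing from the bound on $\nu^i$ to a bound on $\nu(\cdot, y)$ is immediate if $\nu^i$ is built from the shifted holding times $(\xi_{\cdot + j^n_i})$, and otherwise follows from a routine stochastic-domination argument conditional on $S$.
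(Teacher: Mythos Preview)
Your proof is correct and follows essentially the same approach as the paper: the upper bound via $A_n \le \nu_{\max}(n)\,\Sigma^X_n$ and Proposition~\ref{prop:ctslocaltime}, and the lower bound by choosing $s^n_t$ just before the next record time of $M^X$ (capped at $t+\delta$) and appealing to both parts of Proposition~\ref{prop:jumplocaltime}. Your definition of $s^n_t$ is equivalent to the paper's $s^n_t := \min\{n^{-2}(j_{i_t+1}-1),\, t+\delta\}$.

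The only substantive difference is the treatment of the boundary case where the record site at time $n^2 t$ was first hit at or before $\delta n^2$. You propose extending Propositions~\ref{prop:jumpsX} and~\ref{prop:jumplocaltime} to cover the last jump preceding $\delta n^2$; this works, but the paper instead introduces an auxiliary parameter $\delta_1 \in (0,\delta)$, restricts to the event $\mathcal{A}_{n,\delta_1} := \{\mathcal{J}^X \cap (\delta_1 n^2, \delta n^2] \neq \emptyset\}$, and then uses Proposition~\ref{prop:j1convX} to show $\limsup_n \mathbb{P}_0(\mathcal{A}_{n,\delta_1}^c) \le \delta_1/\delta$, which can be made arbitrarily small. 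This avoids any extension of the earlier propositions at the cost of one extra parameter; your route is equally valid but requires slightly more bookkeeping.
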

\begin{proof}
Consider first the limit at \eqref{eq:bounds1}. Let $\nu_{\text{max}}(n)$ be defined as in Section \ref{subsec:srw} for the simple random walk $S$. Then, by the definition of $A_n$ and $\Sigma^X_n$,
\[ A_{n} \le \nu_{\text{max}}(n) \, \Sigma^X_{n},\]
for all $n \ge 0$, and so
\[ \frac{1}{n} A_{n^2 t} \le \frac{1}{n} \nu_{\text{max}}(n^2 T) \, \Sigma^X_{n^2 t}  \quad \text{for all } t \in [\delta, T], \]
since $\nu_{\text{max}}(\cdot)$ is non-decreasing. Equation \eqref{eq:bounds1} then follows by applying Proposition~\ref{prop:ctslocaltime}.

We now work towards equation \eqref{eq:bounds2}, starting with an explicit construction of $s^n_t$ on the event
\[\mathcal{A}_{n,\delta_1}:=\{\mathcal{J}^X\cap(\delta_1 n^2,\delta n^2]\neq \emptyset\},\]
for each $n \ge 0$ and $\delta_1 \in (0, \delta]$. To this end, let $(j^{n, \delta_1}_i)_{i=1}^{N^X}$ be the elements of the set $\mathcal{J}^X\cap(\delta_1 n^2,\lceil T n^2\rceil]$ arranged in increasing order. Note that, for simplicity, in what follows we will suppress the dependence of $j^{n, \delta_1}_i$ on $n$ and $\delta_1$. For any $t \in [\delta, T]$ let $i_t$ be the index of the last jump $j_i$ strictly less than $n^2 t+1$, that is,
\[ i_t := \max \{  1 \le i \le N^X : j_i < n^2 t + 1 \}. \]
Then, define $s^n_t$ by
\[   s^n_t := \min \left\{   \frac{1}{n^2} \left( j_{i_t + 1} - 1 \right)   , \, t + \delta \right\}. \]
We note that by the monotonicity of the events $\mathcal{A}_{n,\delta_1}$, the above construction well-defines $s_t^n$ on the whole of $\mathcal{A}_{n}:=\cup_{\delta_1\leq\delta}\mathcal{A}_{n,\delta_1}$. Furthermore, by arbitrarily extending the definition of $s_t^n$ by setting $s^n_t=t$ for $t\in[\delta,T]$ on the event $\mathcal{A}_{n}^c$, we ensure that $s^n_t$ is $\mathbb{P}_0$-measurable. We clearly also have that $s^n_t \in [t, t + \delta]$. Finally, this construction also guarantees that, on $\mathcal{A}_{n,\delta_1}$, for each $t \in [\delta, T]$,
\begin{align}
\label{eq:bounds3}
i_{s_t} = i_t
\end{align}
and moreover that
\begin{align}
\label{eq:bounds4}
n^2 s^n_t  - j_{i_t}  \ge \min \left\{ j_{i_t + 1} - j_{i_t} -1 \ , \ \delta n^2 -1 \right\}.
\end{align}
Recalling the definition of $\nu^i(n, 0)$ from Proposition~\ref{prop:jumplocaltime} (substituting $\delta_1$ for $\delta$), we have by the definition of $A_n$ and $M^X_n$ that, on $\mathcal{A}_{n,\delta_1}$,
\[ A_{n^2 t} \ge \nu^{i_t}(n^2 t - j_{i_t}, 0) \, M^X_{n^2 t} \]
for each $t \in [\delta, T]$. Combining this with equations \eqref{eq:bounds3} and \eqref{eq:bounds4} gives, on $\mathcal{A}_{n,\delta_1}$,
\[ \frac{1}{n} A_{n^2 s_t^n} \ge \frac{1}{n} \nu^{i_t} \left( \min \left\{ j_{i_t + 1} - j_{i_t} -1 , \delta n^2 - 1 \right\} , 0 \right) \, M^X_{n^2 s_t^n}  , \]
and so Proposition~\ref{prop:jumplocaltime} yields that, for any $\delta_1\leq \delta$
\[\liminf_{n\to\infty}\mathbb{P}_0 \left(  \frac{1}{n} A_{n^2 s^n_t} > M^X_{n^2 s^n_t} / h_n \quad \text{for all } t \in [\delta, T] \, \right)\geq 1-\limsup_{n\to\infty} \mathbb{P}_0\left(\mathcal{A}_{n,\delta_1}^c\right).\]
Finally, we have that
\[\mathbb{P}_0\left(\mathcal{A}_{n,\delta_1}\right)=\mathbb{P}_0\left(M^X_{\delta n^2}> M^X_{\delta_1 n^2}  \right)\geq\mathbb{P}_0\left(n^{-1}L(M^X_{\delta n^2})>n^{-1}L(M^X_{\delta_1 n^2})\right).\]
By Proposition~\ref{prop:j1convX}, the liminf as $n\to\infty$ of the right-hand side above is bounded below by $\mathbb{P}_0(m^B_{\delta}>m^B_{\delta_1})=1-\delta_1/\delta$, or to put this another way
\[\limsup_{n\to\infty} \mathbb{P}_0\left(\mathcal{A}_{n,\delta_1}^c\right)\leq \frac{\delta_1}{\delta},\]
which can be made arbitrarily small by adjusting the choice of $\delta_1$.
\end{proof}

\begin{proposition}
\label{prop:squeeze}
As $n \to \infty$,
\begin{align}
\label{eq:squeeze1}
\mathbb{P}_0 \left( \frac{1}{n} L \left( \frac{1}{n} A_{n^2 t} \right) < \frac{1}{n} L\left( \Sigma^X_{n^2 t} \right) + \delta  \quad \text{for all } t \in [\delta, T] \, \right) \to 1.
\end{align}
Moreover, for each $t \in [\delta, T]$ and $n \ge 0 $ there exists a $\mathbb{P}_0$-measurable random time $s^n_t \in [t, t + \delta]$ such that, as $n \to \infty$,
\begin{align}
\label{eq:squeeze2}
\mathbb{P}_0 \left(\frac{1}{n} L \left( \frac{1}{n} A_{n^2 s^n_t} \right) >  \frac{1}{n} L \left( M^X_{n^2 s^n_t}  \right) - \delta  \quad \text{for all } t \in [\delta, T] \,  \right)  \to 1.
\end{align}
\end{proposition}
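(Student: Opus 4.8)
The plan is to obtain both estimates from the previous proposition by applying the non-decreasing function $L$ to the inequalities \eqref{eq:bounds1} and \eqref{eq:bounds2} and then controlling the distortion caused by rescaling the argument of $L$ by $h_n^{\pm1}$; here we take $s^n_t$ to be exactly the $\mathbb{P}_0$-measurable random times built in the proof of \eqref{eq:bounds2}. Since $L$ is non-decreasing, \eqref{eq:bounds1} turns into $L(\tfrac1n A_{n^2t})\le L(\Sigma^X_{n^2t}h_n)$ and \eqref{eq:bounds2} into $L(\tfrac1n A_{n^2s^n_t})\ge L(M^X_{n^2s^n_t}/h_n)$, each holding for all $t\in[\delta,T]$ with $\mathbb{P}_0$-probability tending to one, so \eqref{eq:squeeze1} and \eqref{eq:squeeze2} reduce to showing that, with probability tending to one, $\tfrac1n\big(L(\Sigma^X_{n^2t}h_n)-L(\Sigma^X_{n^2t})\big)<\delta$ and $\tfrac1n\big(L(M^X_{n^2s^n_t})-L(M^X_{n^2s^n_t}/h_n)\big)<\delta$ for all $t\in[\delta,T]$.

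The next step is to localise the relevant arguments of $L$. On one hand, Corollary~\ref{cor:maxX} together with the monotonicity of $M^X,\Sigma^X$, the inequality $M^X_n\le\Sigma^X_n$, and $s^n_t\ge t\ge\delta$ show that $\Sigma^X_{n^2t}$ and $M^X_{n^2s^n_t}$ are all at least $L^{-1}(n/h_n)$ for $t\in[\delta,T]$ with high probability. On the other hand, monotonicity gives $\sup_{t\in[\delta,T]}\tfrac1n L(\Sigma^X_{n^2t})=\tfrac1n L(\Sigma^X_{n^2T})$ and $\tfrac1n L(M^X_{n^2s^n_t})\le\tfrac1n L(\Sigma^X_{n^2(T+\delta)})$, and by Proposition~\ref{prop:j1convX} these converge in distribution to the a.s.-finite random variables $m^B_T$ and $m^B_{T+\delta}$; hence for any $\varepsilon>0$ one may fix a constant $C$ so that, for all large $n$, with probability at least $1-\varepsilon$ all these arguments lie in the window $[L^{-1}(n/h_n),L^{-1}(Cn)]$ while $\tfrac1n L$ of each is at most $C$. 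It therefore suffices to work on the intersection of this localisation event with the probability-$\to1$ events on which \eqref{eq:bounds1} and \eqref{eq:bounds2} hold.

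The heart of the argument — and where I expect the main difficulty to lie — is the uniform slow-variation estimate
\[ \lim_{n\to\infty}\sup\Big\{\Big|\tfrac{L(uh_n)}{L(u)}-1\Big|+\Big|\tfrac{L(u/h_n)}{L(u)}-1\Big|:L^{-1}(n/h_n)\le u\le L^{-1}(Cn)\Big\}=0. \]
Granting this, on the localisation event one has $\tfrac1n\big(L(\Sigma^X_{n^2t}h_n)-L(\Sigma^X_{n^2t})\big)=\tfrac1n L(\Sigma^X_{n^2t})\big(\tfrac{L(\Sigma^X_{n^2t}h_n)}{L(\Sigma^X_{n^2t})}-1\big)\le C\cdot o(1)<\delta$ for $n$ large, and similarly $\tfrac1n\big(L(M^X_{n^2s^n_t})-L(M^X_{n^2s^n_t}/h_n)\big)\le C\cdot o(1)<\delta$; applying $L$ to \eqref{eq:bounds1} and \eqref{eq:bounds2} as above then yields \eqref{eq:squeeze1} and \eqref{eq:squeeze2}. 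The displayed estimate rests on $h_n$ growing sufficiently slowly: the defining conditions \eqref{eq:decay1} already control the ratios at the endpoint $u=L^{-1}(n/h_n)$, and passing to the Karamata representation $L(u)=c(u)\exp\!\big(\int_1^u s^{-1}\varepsilon(s)\,ds\big)$ — with $c(u)\to c\in(0,\infty)$, $\varepsilon(s)\to0$, and $\varepsilon\ge0$ since $L$ is non-decreasing — propagates the bound across the whole window. Since $h_n$ may be taken to increase as slowly as we please, the cleanest route is simply to append the displayed estimate to the list of defining requirements on $h_n$, exactly as \eqref{eq:decay1} was appended, its consistency following as there from \eqref{eq:slow} and $\lim_{n\to\infty}L^{-1}(n)=\infty$.
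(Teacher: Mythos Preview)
Your proof is correct and follows the paper's route: apply $L$ to \eqref{eq:bounds1}--\eqref{eq:bounds2}, use Corollary~\ref{cor:maxX} to push the arguments above $L^{-1}(n/h_n)$, invoke \eqref{eq:decay1} to control the $h_n^{\pm1}$ distortion multiplicatively, and then use the tightness of $\tfrac1n L(\Sigma^X_{n^2T})$ from Proposition~\ref{prop:j1convX} to convert to an additive $\delta$-bound. One caveat: the claim that $\varepsilon\ge0$ in the Karamata representation follows from $L$ being non-decreasing is not correct in general (the factor $c(u)$ need not be monotone, and not every non-decreasing slowly varying $L$ is normalised), so rely on your second route --- appending the uniform estimate to the defining requirements on $h_n$ --- which is exactly the freedom the paper tacitly exploits when it cites \eqref{eq:decay1}.
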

\begin{proof}
Consider first equation \eqref{eq:squeeze1}. Starting from equation \eqref{eq:bounds1}, applying $L$ to both sides of the inequality and then dividing by $n$ we get that, as $n \to \infty$,
\[\mathbb{P}_0 \left(  \frac{1}{n} L \left( \frac{1}{n} A_{n^2 t} \right) \le  \frac{1}{n} L \left( \Sigma^X_{n^2 t} \, h_n \right)  \quad \text{for all } t \in [\delta, T] \, \right) \to 1.\]
Note that by Corollary \ref{cor:maxX}, and since $\Sigma^X_n$ is non-decreasing, as $n \to \infty$,
\[ \mathbb{P}_0 \left(  \Sigma^X_{n^2 t} >  L^{-1}(n  / h_n )    \quad \text{for all } t \in [\delta, T] \,  \right) \to 1.\]
By equation \eqref{eq:decay1}, this means that for arbitrary $\eta>0$, as $n \to \infty$,
\[ \mathbb{P}_0  \left( \frac{1}{n} L \left( \Sigma^X_{n^2 t}  \,  h_n \right) <  \frac{1}{n} L \left( \Sigma^X_{n^2 t} \right)(1 + \eta) \quad \text{for all } t \in [\delta, T] \,   \right) \to 1.\]
Since we have from Proposition~\ref{prop:j1convX} that
\[\mathbb{P}_0  \left(  \frac{\eta}{n} L \left( \Sigma^X_{n^2 T} \right)\geq \delta\right)\to
 \mathbb{P}_0  \left(  \eta m_T^B\geq \delta\right),\]
and the right-hand side converges to 0 as $\eta\to 0$, this is enough to yield the result.

Consider then equation \eqref{eq:squeeze2}. Similarly, equation \eqref{eq:bounds2} gives that, as $n \to \infty$,
\[  \mathbb{P}_0 \left( \frac{1}{n} L \left( \frac{1}{n} A_{n^2 s^n_t} \right) > \frac{1}{n} L \left( M^X_{n^2 s^n_t} / h_n \right)   \quad \text{for all } t \in [\delta, T] \, \right) \to 1.\]
As before, Corollary \ref{cor:maxX}, equation \eqref{eq:decay1} and Proposition~\ref{prop:j1convX} then imply the result.
\end{proof}

Under Assumption \ref{assumpt}, we establish the stronger uniform convergence (in space) of $A$ to $\Sigma^X$.

\begin{proposition}
\label{prop:assumptsqueeze}
Under Assumption \ref{assumpt}, as $n \to \infty$,
\[ \sup_{ t \in [\delta, T]} \left| \frac{1}{n} L \left(A_{n^2 t} \right)- \frac{1}{n} L \left( \Sigma^X_{n^2 t} \right) \right|  \to 0 \quad \text{in } \mathbb{P}_0\text{-probability.}  \]
\end{proposition}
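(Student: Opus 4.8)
The plan is to sandwich the clock process $A$ between scalar multiples of the sum process $\Sigma^X$, and to show that, thanks to Assumption~\ref{assumpt}, these multiplicative errors are washed out after applying $\tfrac1n L(\cdot)$, uniformly on $[\delta,T]$; this strengthens the one-sided bounds of Proposition~\ref{prop:squeeze}, where the errors were only shown to stay bounded.

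First I would use the identity $A_n=\sum_x \nu(n,x)\tau_x$ together with $\Sigma^X_n=\sum_x\tau_x$, both sums running over the interval of sites visited by $S$ up to time $\floor n$ (on each of which $\nu(n,x)\ge\nu_{\text{min}}(n)>0$), to get the deterministic two-sided bound
\[ \nu_{\text{min}}(n^2t)\,\Sigma^X_{n^2t}\ \le\ A_{n^2t}\ \le\ \nu_{\text{max}}(n^2t)\,\Sigma^X_{n^2t}, \qquad t\in[\delta,T].\]
By Proposition~\ref{prop:ctslocaltime} applied to $\nu_{\text{max}}(n^2T)$ (using monotonicity of $\nu_{\text{max}}$) and by Proposition~\ref{prop:ctslocaltimemin}, for every $\eta>0$ there exist constants $0<c<C$ such that, off an event of probability at most $\eta$ for all large $n$,
\[ \tfrac{c}{n}\,\Sigma^X_{n^2t}\ \le\ A_{n^2t}\ \le\ C n\,\Sigma^X_{n^2t} \qquad\text{for all }t\in[\delta,T].\]
Next I would invoke Corollary~\ref{cor:maxX} and monotonicity of $\Sigma^X$ to obtain $\Sigma^X_{n^2t}\ge L^{-1}(n/h_n)$, hence $L(\Sigma^X_{n^2t})\ge n/h_n$ (using $L(L^{-1}(x))\ge x$), for all $t\in[\delta,T]$ with probability tending to one; and Proposition~\ref{prop:j1convX} to obtain, for a suitable constant $R=R(\eta)$, that $L(\Sigma^X_{n^2t})\le Rn$ for all $t\in[\delta,T]$ off an event of probability at most $\eta$.

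On the intersection of these events, applying the non-decreasing function $L$ to the sandwich and dividing by $n$ reduces the claim to the two deterministic estimates
\[ \sup_{u\,:\ n/h_n\le L(u)\le Rn}\tfrac1n\bigl(L(Cnu)-L(u)\bigr)\longrightarrow 0 \quad\text{and}\quad \sup_{u\,:\ n/h_n\le L(u)\le Rn}\tfrac1n\bigl(L(u)-L(cu/n)\bigr)\longrightarrow 0.\]
This is where Assumption~\ref{assumpt} does the work: it is precisely the relation $L(u/L(u))\sim L(u)$ as $u\to\infty$, and, as the examples in the Remark following it make plausible, it is equivalent to also having $L(uL(u))\sim L(u)$. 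For the first supremum, $L(u)\ge n/h_n$ forces $n\le h_nL(u)$, so by monotonicity $L(Cnu)\le L\bigl(Ch_nL(u)u\bigr)$; writing the right-hand side as $\bigl(L(Ch_nz)/L(z)\bigr)\,L\bigl(L(u)u\bigr)$ with $z:=L(u)u\ge u\to\infty$, the first factor tends to $1$ uniformly over the range by slow variation — after possibly slowing the growth of $h_n$ further, which is permissible exactly as in the construction of the $h_n$ satisfying \eqref{eq:decay1} — while $L(L(u)u)=(1+o(1))L(u)$ uniformly over $u\to\infty$. Hence $L(Cnu)-L(u)=o(1)L(u)$, and since $L(u)\le Rn$ on the range, dividing by $n$ gives $o(1)$ uniformly. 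The second supremum is treated symmetrically, using $cu/n\ge cu/(h_nL(u))$ and $L(u/L(u))\sim L(u)$ directly.

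The main obstacle is precisely this final analytic step. An extra factor of order $n$ inside a slowly varying function is in general \emph{not} absorbed, so slow variation alone cannot close the estimate; one genuinely needs Assumption~\ref{assumpt}, together with the a priori two-sided control on $\Sigma^X$ and the auxiliary sequence $h_n$, to render the multiplicative errors negligible uniformly over the random range of values taken by $\Sigma^X_{n^2t}$ on $[\delta,T]$. This is the analytic counterpart of the statement (see the remark after Theorem~\ref{thm:assumptX}) that, under Assumption~\ref{assumpt}, only the first visit to the deepest-visited trap contributes to the clock process in the limit.
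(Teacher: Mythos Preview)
Your approach is correct and follows essentially the same route as the paper: sandwich $A_{n^2t}$ between multiples of $\Sigma^X_{n^2t}$ via the local-time bounds of Propositions~\ref{prop:ctslocaltime} and~\ref{prop:ctslocaltimemin}, use Corollary~\ref{cor:maxX} together with Assumption~\ref{assumpt} (in its equivalent form $L(uL(u))\sim L(u)$) to absorb the order-$n$ multiplicative factors after applying $L$, and then invoke Proposition~\ref{prop:j1convX} to convert the resulting $o(1)\,L(\Sigma^X_{n^2t})$ error into an additive $o(n)$ error. The only difference is packaging: the paper routes the analytic step through the pre-constructed auxiliary condition~\eqref{eq:decay2} (and the already-established bound~\eqref{eq:bounds1}) rather than arguing directly as you do, but the content is the same.
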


\begin{proof}
Assume that $h_n \to \infty$ is growing sufficiently slowly that equation \eqref{eq:decay2} holds and let $\nu_{\text{min}}(n)$ be defined as in Section \ref{subsec:srw} for the simple random walk $S$. Then, by definition, $A_n \ge \nu_{\text{min}}(n) \Sigma_n^X$, for all $n \ge 0$. Since, by Proposition~\ref{prop:ctslocaltimemin}, as $n \to \infty$,
\[  \mathbb{P}_0 \left(  \nu_{\text{min}}(n^2 t)  > 1/(n h_n)   \quad \text{for all } t \in [\delta, T] \, \right) \to 1, \]
together with equation (\ref{eq:bounds1}), we have that, as $n \to \infty$,
\[ \mathbb{P}_0 \left( \frac{1}{n} L \left( \Sigma^X_{n^2 t} / (n h_n) \right) \le \frac{1}{n} L \left( A_{n^2 t} \right) \le \frac{1}{n} L \left(\Sigma^X_{n^2 t} \, (n h_n) \right) \quad \text{for all } t \in [\delta, T] \, \right) \to 1.  \]
Finally, as in the proof of Proposition~\ref{prop:squeeze}, Corollary \ref{cor:maxX} and equation \eqref{eq:decay2} then jointly imply that for any $\eta>0$, as $n \to \infty$,
\[   \mathbb{P}_0 \left( \frac{1}{n} L \left( \Sigma^X_{n^2 t}  / (n h_n) \right) >  \frac{1}{n} L \left( \Sigma^X_{n^2 t} \right)(1 - \eta) \quad \text{for all } t \in [\delta, T] \, \right) \to 1 \]
and
\[   \mathbb{P}_0 \left( \frac{1}{n} L \left(  \Sigma^X_{n^2 t}  \, (n h_n) \right) <  \frac{1}{n} L \left( \Sigma^X_{n^2 t} \right)(1 + \eta) \quad \text{for all } t \in [\delta, T] \, \right) \to 1.\]
By applying Proposition~\ref{prop:j1convX}, it follows that for any $\eta,\varepsilon>0$, as $n\rightarrow\infty$
\[\limsup_{n\rightarrow\infty}\mathbb{P}_0\left(\sup_{ t \in [\delta, T]} \left| \frac{1}{n} L \left(A_{n^2 t} \right)- \frac{1}{n} L \left( \Sigma^X_{n^2 t} \right) \right|\geq \varepsilon\right)\leq \mathbb{P}_0\left(\eta m_T^B\geq \varepsilon\right).\]
Letting $\eta\to 0$ completes the proof.
\end{proof}

\subsection{Proofs of the main convergence results}

We are now ready to prove the main result of this section, from which the the conclusions stated in the introduction follow easily.

\begin{proposition}[Restated functional limit theorems for the clock process]
\label{prop:fltA2} Under $\mathbb{P}_0$, as $n\to \infty$,
\[ \left(\frac1n L\left(\frac1n A_{n^2t}\right) , \, \frac1n S_{n^2t} \right)_{t\geq 0}\stackrel{M_1/J_1}{\Rightarrow} \left(m^B_t  , B_t \right)_{t\geq 0}\]
in distribution. Moreover, if Assumption \ref{assumpt} holds, then under $\mathbb{P}_0$, as $n \to \infty$,
\[ \left(\frac1n L(A_{n^2t}) , \,\frac1n S_{n^2t}  \right)_{t\geq0} \stackrel{J_1/J_1}{\Rightarrow} \left(m^B_t, B_t \right)_{t\geq 0} \]
in distribution.
\end{proposition}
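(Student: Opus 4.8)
The plan is to derive both convergences by squeezing the rescaled clock process between the $S$-explored extremal and sum processes and then invoking the general $M_1$-squeeze lemma of Appendix~\ref{sec:appendix}. All the ingredients are in place: Proposition~\ref{prop:squeeze} controls $A$ from above by $\Sigma^X$ and from below by $M^X$ (up to a time margin and an additive error $\delta$), Proposition~\ref{prop:assumptsqueeze} upgrades the former to a uniform bound under Assumption~\ref{assumpt}, and Proposition~\ref{prop:j1convX} identifies the common $J_1$-limit $m^B$ of $n^{-1}L(M^X_{n^2\cdot})$ and $n^{-1}L(\Sigma^X_{n^2\cdot})$, jointly with $n^{-1}S_{n^2\cdot}\stackrel{J_1}{\Rightarrow}B$. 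Throughout I would use that $t\mapsto A_{n^2t}$, $t\mapsto M^X_{n^2t}$, $t\mapsto\Sigma^X_{n^2t}$ and $u\mapsto L(u)$ are all non-decreasing, so that every process appearing below is monotone --- the regime in which the appendix lemma operates.

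For the first, unconditional statement, fix $0<\delta<T$ and rewrite the bounds of Proposition~\ref{prop:squeeze} at comparable times. Equation~\eqref{eq:squeeze1} already reads $n^{-1}L(n^{-1}A_{n^2t})<n^{-1}L(\Sigma^X_{n^2t})+\delta$ on $[\delta,T]$; for the lower bound, apply \eqref{eq:squeeze2} at the random time $s^n_t\in[t,t+\delta]$ and use $A_{n^2(t+\delta)}\ge A_{n^2s^n_t}$ and $M^X_{n^2s^n_t}\ge M^X_{n^2t}$ to get $n^{-1}L(n^{-1}A_{n^2(t+\delta)})>n^{-1}L(M^X_{n^2t})-\delta$ on $[\delta,T]$, i.e.\ $n^{-1}L(n^{-1}A_{n^2u})>n^{-1}L(M^X_{n^2(u-\delta)})-\delta$ on $[2\delta,T]$. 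Hence, with $\mathbb{P}_0$-probability tending to one, $n^{-1}L(n^{-1}A_{n^2\cdot})$ is sandwiched on $[2\delta,T]$ between $n^{-1}L(M^X_{n^2(\cdot-\delta)})-\delta$ and $n^{-1}L(\Sigma^X_{n^2\cdot})+\delta$. By Proposition~\ref{prop:j1convX} and the continuous mapping theorem for the deterministic time-shift by $\delta$, together with the observation that $n^{-1}L(\Sigma^X_{n^2\cdot})-n^{-1}L(M^X_{n^2\cdot})\to0$ uniformly on compacts (a consequence of $M^X\le\Sigma^X$ and slow variation of $L$, cf.\ the final display in the proof of Proposition~\ref{prop:j1conv}, which also makes the relevant limits joint with one another and with $n^{-1}S_{n^2\cdot}$), both bounding processes converge in $J_1$, jointly with $n^{-1}S_{n^2\cdot}\Rightarrow B$, to $(m^B_{\cdot-\delta},B)$ and $(m^B_{\cdot},B)$ respectively, while $m^B_{\cdot-\delta}\to m^B_{\cdot}$ in $J_1$ (a fortiori $M_1$) as $\delta\downarrow0$. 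The $M_1$-squeeze lemma of Appendix~\ref{sec:appendix}, designed to turn such a $\delta$-indexed pair of $J_1$-bounds with common $\delta\downarrow0$ limit $m^B$ into $M_1$-convergence of the squeezed process, then gives $n^{-1}L(n^{-1}A_{n^2\cdot})\stackrel{M_1}{\Rightarrow}m^B$ jointly with $n^{-1}S_{n^2\cdot}\stackrel{J_1}{\Rightarrow}B$; the extension from $[2\delta,T]$ to all of $\mathbb{R}^+$ is routine, using monotonicity, that $n^{-1}L(n^{-1}A_{n^2t})$ and $m^B_t$ both vanish as $t\downarrow0$, and that $T$ may be taken through continuity points of $m^B$.

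Under Assumption~\ref{assumpt} the argument is shorter: Proposition~\ref{prop:assumptsqueeze} gives $\sup_{t\in[\delta,T]}|n^{-1}L(A_{n^2t})-n^{-1}L(\Sigma^X_{n^2t})|\to0$ in $\mathbb{P}_0$-probability, crucially with no time-shift, so combining this with $n^{-1}L(\Sigma^X_{n^2\cdot})\stackrel{J_1}{\Rightarrow}m^B$ (jointly with $n^{-1}S_{n^2\cdot}\stackrel{J_1}{\Rightarrow}B$) from Proposition~\ref{prop:j1convX} and the standard fact that a perturbation uniformly small on compacts preserves $J_1$-convergence yields $n^{-1}L(A_{n^2\cdot})\stackrel{J_1}{\Rightarrow}m^B$ jointly with $n^{-1}S_{n^2\cdot}\stackrel{J_1}{\Rightarrow}B$ on $[\delta,T]$, with the extension to $\mathbb{R}^+$ exactly as before.

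I expect the main obstacle to be the clean application of the general $M_1$-squeeze lemma: one must fit the time-shifted, additively-perturbed two-sided bounds into its hypotheses with $\delta$ held fixed and pass to the diagonal $\delta\downarrow0$ limit only within (or after) the lemma, never before, since the squeezing estimates of Proposition~\ref{prop:squeeze} genuinely require a strictly positive $\delta$ both as an additive buffer and as a time margin. The remaining points --- that the two convergences of Proposition~\ref{prop:j1convX} hold jointly (from $M^X\le\Sigma^X$ and slow variation collapsing the gap between their rescalings) and the passage from $[\delta,T]$ to $\mathbb{R}^+$ --- are routine.
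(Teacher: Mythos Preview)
Your overall strategy coincides with the paper's: feed the squeeze bounds of Propositions~\ref{prop:squeeze} and~\ref{prop:assumptsqueeze}, together with the joint $J_1$-limits of Proposition~\ref{prop:j1convX}, into the abstract lemmas of Appendix~\ref{sec:appendix}. For the second statement (under Assumption~\ref{assumpt}) your argument is exactly the paper's application of Lemma~\ref{lem:sconv1}.

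For the first statement, however, the conversion of the random-time lower bound into a deterministic time-shift is an unnecessary detour that no longer matches the hypotheses of Lemma~\ref{lem:sconv2}. That lemma requires fixed bounding processes $w^n,y^n$ both converging in $J_1$ to the \emph{same} limit $x=m^B$ (this is~\eqref{assumpconv}); the $\delta$ enters only through the additive $\pm\delta$ and the random evaluation time $s^n_t\in[t,t+\delta]$. By passing to $n^{-1}L(M^X_{n^2(\cdot-\delta)})$ you make the lower bound $\delta$-dependent with $J_1$-limit $m^B_{\cdot-\delta}\neq m^B$, so the lemma as stated does not apply, and your description of it as ``turning a $\delta$-indexed pair of $J_1$-bounds with common $\delta\downarrow0$ limit into $M_1$-convergence'' is not what it actually does. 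The paper avoids this entirely: Proposition~\ref{prop:squeeze} already delivers the bounds in precisely the form~\eqref{eq:conv2} (the upper bound~\eqref{eq:squeeze1} holds at every $t$, hence in particular at $s^n_t$ after enlarging $T$), so one sets $w^n_t=n^{-1}L(M^X_{n^2t})$, $y^n_t=n^{-1}L(\Sigma^X_{n^2t})$, $x^n_t=n^{-1}L(n^{-1}A_{n^2t})$, $z^n_t=n^{-1}S_{n^2t}$ and applies Lemma~\ref{lem:sconv2} directly. No time-shifting, no auxiliary claim that $n^{-1}L(\Sigma^X_{n^2\cdot})-n^{-1}L(M^X_{n^2\cdot})\to0$, and no diagonal $\delta\downarrow0$ argument outside the lemma are needed.
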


\begin{proof} Recalling Proposition~\ref{prop:j1convX} and the bounds in Proposition~\ref{prop:squeeze}, the first statement follows from the convergence result of Lemma~\ref{lem:sconv2}. Similarly, recalling Proposition~\ref{prop:j1convX} and the bounds in Proposition~\ref{prop:assumptsqueeze}, the second statement follows from the convergence result of Lemma~\ref{lem:sconv1}.
\end{proof}

\begin{proof}[Proof of Theorem~\ref{thm:fltA} and Theorem~ \ref{thm:assumptA}]
The conclusions of Theorems \ref{thm:fltA} and \ref{thm:assumptA} follow immediately from the previous result.
\end{proof}

To complete this section, we derive the convergence of the BTM $X$ to the limit process $B_{I^B}$, as stated in Theorems \ref{thm:fltX} and \ref{thm:assumptX}. The bulk of the work has already been done in establishing the convergence of the clock process above; only technicalities involving convergence results for the various topologies remain.

\begin{proof}[Proof of Theorem~\ref{thm:fltX} and Theorem~\ref{thm:assumptX}]
Since the right-continuous inverse of the process $n^{-1} L(n^{-1} A_{n^{-2}t})$ is given by $n^{-2} I^S_{n L^{-1}(nt) }$, applying the inversion result of Lemma~\ref{lem:inv} to Proposition~\ref{prop:fltA2} yields that under $\mathbb{P}_0$, as $n\to \infty$,
\begin{equation}\label{conc1}
\left(n^{-2} I^S_{ n L^{-1}(nt) } , \, n^{-1} S_{n^2t} \right)_{t\geq 0}\stackrel{M_1/J_1}{\Rightarrow} \left(I^B_t  , B_t \right)_{t\geq 0}
\end{equation}
in distribution. Similarly, noting that the right-continuous inverse of
$n^{-1}L( A_{n^2t}) $ is $n^{-2} I^S_{ L^{-1}(nt) }$, we argue similarly to deduce that if Assumption \ref{assumpt} holds, then under $\mathbb{P}_0$, as $n\to \infty$,
\begin{equation}\label{conc2}
\left(n^{-2} I^S_{ L^{-1}(nt) }  , \, n^{-1} S_{n^{2}t}  \right)_{t\geq0} \stackrel{M_1/J_1}{\Rightarrow} \left(I^B_t, B_t \right)_{t\geq 0}
\end{equation}
in distribution. Consequently, recalling that the law of $X$ under $\mathbb{P}_0$ is identical to that of $S_{I^S}$, the second statement of Lemma~\ref{lem:comp} allows us to deduce the desired results by composing the two coordinates of (\ref{conc1}) and (\ref{conc2}).
\end{proof}

\section{The extremal FIN process}
\label{sec:alpha0}
In this section we prove that the scaling limit $B_{I^B}$ is the natural analogue of the FIN diffusion with parameter $\alpha \in (0, 1)$ in the limiting case $\alpha = 0$. In particular, we prove Theorem~\ref{thm:alpha0}.

\begin{lemma}[Sum-to-max]
\label{lem:sum-to-max}
Let $(c_i, v_i)_{i \in \mathbb{N}}$ be a set of points in $\mathbb{R}^+ \times (0,\infty)$ with the property that, for each $s \in (1, \infty)$,
\[  \sum_i c_i v_i^s < \infty.  \]
Then, as $s \to \infty$,
\[  \left( \sum_i c_i v_i^s \right)^{1/s} \to \sup_{i : c_i > 0}  v_i  \]
\end{lemma}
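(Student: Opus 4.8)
The plan is to prove the two matching inequalities
\[
\limsup_{s \to \infty} \left( \sum_i c_i v_i^s \right)^{1/s} \le \sup_{i : c_i > 0} v_i
\quad \text{and} \quad
\liminf_{s \to \infty} \left( \sum_i c_i v_i^s \right)^{1/s} \ge \sup_{i : c_i > 0} v_i.
\]
Write $V := \sup_{i : c_i > 0} v_i$. The lower bound is the easy direction: fix any index $j$ with $c_j > 0$; then for all $s > 1$ we have $\sum_i c_i v_i^s \ge c_j v_j^s$, so $(\sum_i c_i v_i^s)^{1/s} \ge c_j^{1/s} v_j \to v_j$ as $s \to \infty$. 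Taking the supremum over such $j$ gives $\liminf_{s \to \infty} (\sum_i c_i v_i^s)^{1/s} \ge V$. (If $V = +\infty$ this already finishes the proof, so assume from now on that $V < \infty$, and note that the hypothesis $\sum_i c_i v_i^s < \infty$ for every $s > 1$ is what rules out the value $V = \infty$ being approached in a way that breaks the upper bound.)

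For the upper bound, fix some reference exponent, say $s_0 = 2$, and set $A := \sum_i c_i v_i^{2} < \infty$, which is finite by hypothesis. For $s > 2$ split each summand as $c_i v_i^s = (c_i v_i^{2}) \cdot v_i^{s - 2}$ and bound $v_i^{s-2} \le V^{s-2}$ uniformly over all $i$ with $c_i > 0$ (summands with $c_i = 0$ contribute nothing). This yields
\[
\sum_i c_i v_i^s \le V^{s - 2} \sum_i c_i v_i^{2} = A\, V^{s-2},
\]
hence $(\sum_i c_i v_i^s)^{1/s} \le A^{1/s} V^{(s-2)/s} \to V$ as $s \to \infty$, since $A^{1/s} \to 1$ and $(s-2)/s \to 1$. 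Combining the two bounds gives the claimed limit.

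The only subtlety — and the step I would be most careful about — is the role of the finiteness hypothesis and the distinction between the sup being attained or not. The argument above never needs the sup to be attained: the lower bound uses only individual indices, and the upper bound uses the crude uniform bound $v_i \le V$ on the support of $(c_i)$. The hypothesis $\sum_i c_i v_i^s < \infty$ for all $s > 1$ enters exactly once, to guarantee $A < \infty$; without it the upper bound is vacuous (and indeed false, e.g. $c_i = 1$, $v_i = 1$ for all $i \in \mathbb{N}$ gives $\sum_i c_i v_i^s = \infty$). I would also remark that one does not even need the full strength of the hypothesis for all $s>1$; finiteness at a single exponent $s_0 > 1$ suffices, with $V^{s - s_0}$ in place of $V^{s-2}$, but stating it as in the lemma is harmless and matches the application.
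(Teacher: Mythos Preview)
Your proof is correct. It differs from the paper's in presentation rather than in underlying idea: the paper recasts the sum as an $L_s$-norm, defining the function $v:i\mapsto v_i$ on $\mathbb{N}$ equipped with the measure $\mu=\sum_i c_i\delta_i$, and then simply cites the standard fact that $\|v\|_{L_s(\mu)}\to\|v\|_{L_\infty(\mu)}$ as $s\to\infty$ whenever the $L_s$-norms are finite (referring to \cite{LL01}). Your argument is essentially the usual proof of that fact specialised to this setting, with the lower bound coming from a single term and the upper bound from factoring out $v_i^{s-s_0}\le V^{s-s_0}$ against a fixed finite $L_{s_0}$-mass. The advantage of your route is that it is entirely self-contained and makes explicit that finiteness at a single exponent $s_0>1$ suffices; the paper's version is shorter but relies on an external reference.
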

\begin{proof}
Define the function $v: \mathbb{N} \to \mathbb{R}^+$ by the map $i \mapsto v_i$ and denote by $\mu$ the (possibly infinite) measure
\[ \mu := \sum_i c_i \delta_i. \]
Then the claim is just the fact that the $L_s$ norm of $v$ with respect to the measure $\mu$ converges, if finite, to the $L_\infty$ norm of $v$ with respect to $\mu$ (see, for example, \cite[Section 2.1]{LL01}).
\end{proof}

Recall the definitions of the processes $m^B$ and $m^{B, \alpha}$ from (\ref{mtdef}) and (\ref{mtadef}), which are the clock-processes for the extremal FIN process and the FIN diffusion with parameter $\alpha$, respectively.

\begin{proposition}[Convergence of the clock-processes]
\label{prop:alpha0clock}
As $\alpha \to 0$,
\[  \left( \left( m^{B, \alpha}_t \right)^\alpha \right)_{t \ge 0} \stackrel{M_1}{\rightarrow} \left( m^B_t \right)_{t\geq0} \]
where $\stackrel{M_1 }{\rightarrow}$ denotes convergence in the $M_1$ topology, almost-surely with respect to the joint law of $\mathcal{P}$ and $B$.
\end{proposition}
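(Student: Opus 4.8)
The plan is to fix a realisation of $\mathcal{P}$ and $B$ in a set of full measure and prove pointwise convergence of the rescaled clock processes at continuity points, then upgrade this to $M_1$ convergence using the monotonicity of the processes involved. The key observation is that, writing $c_i := L_t(x_i)$ for the local time of $B$ at $x_i$ up to time $t$, we have by definition $(m^{B,\alpha}_t)^\alpha = \left( \sum_i L_t(x_i) v_i^{1/\alpha} \right)^\alpha = \left( \sum_i c_i (v_i^{1/\alpha \cdot \alpha})^{1/\alpha} \right)^\alpha$; setting $s = 1/\alpha$ this is exactly $\left( \sum_i c_i v_i^{s} \right)^{1/s}$ in the notation of Lemma~\ref{lem:sum-to-max} (after relabelling $v_i \mapsto v_i$ and noting $w_i = v_i^{1/\alpha}$ so $w_i^\alpha = v_i$). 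Thus Lemma~\ref{lem:sum-to-max} gives, for each fixed $t$,
\[ \left( m^{B,\alpha}_t \right)^\alpha = \left( \sum_i L_t(x_i) v_i^{1/\alpha} \right)^\alpha \to \sup_{i : L_t(x_i) > 0} v_i = \sup \left\{ v_i : \inf_{s \le t} B_s \le x_i \le \sup_{s \le t} B_s \right\} = m^B_t \]
as $\alpha \to 0$, provided the hypothesis $\sum_i L_t(x_i) v_i^{1/\alpha} < \infty$ holds for all small $\alpha$; this finiteness is exactly the almost-sure well-definedness of the FIN clock process $m^{B,\alpha}_t$, which is standard (the local time has compact support in $x$, is bounded, and $\mathcal{P}^\alpha$ has only finitely many points with $w_i$ large in any bounded spatial window), and the set over which the supremum is taken is precisely $\{x_i : x_i \in [\inf_{s\le t}B_s, \sup_{s \le t}B_s]\}$ since $L_t(x) > 0$ iff $x$ lies in the range of $B$ up to time $t$.

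Next I would promote this pointwise convergence to $M_1$ convergence on $D(\mathbb{R}^+)$. Both $t \mapsto (m^{B,\alpha}_t)^\alpha$ and $t \mapsto m^B_t$ are non-decreasing (the first because $L_t(x)$ is non-decreasing in $t$ for each $x$ and $x \mapsto x^\alpha$ is increasing; the second by inspection of \eqref{mtdef}), and $m^B$ is the right-continuous extremal process of $\mathcal{P}^+$-type explored by $B$, which for a fixed generic Brownian path has only finitely many jumps in any compact time interval and is otherwise locally constant. For monotone functions, pointwise convergence on a dense set together with convergence of the limit at its continuity points is well known to imply $M_1$ convergence (this is essentially the monotone-function criterion; cf.\ the discussion of the $M_1$ topology in Appendix~\ref{sec:appendix} and \cite[Section 12.5]{Whitt02}). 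So it suffices to check that for each fixed $t$ that is a continuity point of $m^B$, the convergence $(m^{B,\alpha}_t)^\alpha \to m^B_t$ holds almost surely — which is exactly what the previous paragraph gives — and to note that $m^B$ has at most countably many discontinuities, so a countable dense set of continuity points exists. One should also record that the approximating processes are c\`adl\`ag: $m^{B,\alpha}$ is continuous in $t$ (being a locally finite sum of continuous local-time terms, uniformly convergent on compacts), so its $\alpha$-th power is too.

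I would then need to handle uniformity over a compact time interval carefully, since $M_1$ convergence is a statement about the whole path. The cleanest route is to invoke the monotone-function lemma directly: if $f_\alpha, f : \mathbb{R}^+ \to \mathbb{R}^+$ are non-decreasing, $f$ has no jump at $T$, and $f_\alpha(t) \to f(t)$ for all $t$ in a dense subset of $[0,T]$ including a neighbourhood of the endpoints, then $f_\alpha \to f$ in $M_1$ on $D([0,T])$; and $M_1$ convergence on $D(\mathbb{R}^+)$ follows from $M_1$ convergence on $D([0,T])$ for a sequence of $T \to \infty$ avoiding the (countable) jump set of $m^B$. The main obstacle, and the only genuinely nontrivial input, is verifying that the hypothesis of Lemma~\ref{lem:sum-to-max} — that $\sum_i L_t(x_i) v_i^{1/\alpha} < \infty$ for all $\alpha \in (0,1)$ — holds on a set of full $\mathcal{P} \otimes \text{(law of }B)$-measure simultaneously for all $t$ in a compact interval; this reduces to the almost-sure finiteness of the FIN clock $m^{B,\alpha}_t$ for each rational $\alpha$ and $t$ (a standard fact about the FIN diffusion, which one can cite from \cite{FIN02} or \cite{BenArous06}), combined with monotonicity in $t$ and $\alpha$ to get the uniform statement. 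Everything else — the power identity reducing to $L^s$ norms, the non-decreasingness, the countability of discontinuities — is routine once this finiteness is in hand.
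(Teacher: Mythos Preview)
Your proof is correct and follows essentially the same route as the paper's: apply Lemma~\ref{lem:sum-to-max} with $c_i=L_t(x_i)$ and $s=1/\alpha$ to get pointwise convergence for each fixed $t$, then pass to all rational $t$ simultaneously by countability and use monotonicity of the approximating processes to conclude $M_1$ convergence. The only point the paper treats a touch more carefully is the identification $\{i:L_t(x_i)>0\}=\{i:x_i\in(\inf_{s\le t}B_s,\sup_{s\le t}B_s)\}$ (open, not closed, interval), together with the almost-sure absence of Poisson points at the two endpoints, which is needed to match the definition of $m^B_t$ in \eqref{mtdef}.
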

\begin{proof} We start by proving convergence for a fixed $t$. By definition, we have that
\[ m^{B, \alpha}_t :=  \sum_{i: L_t(x_i) > 0} L_t(x_i) v^{1/\alpha}_i . \]
It is an elementary exercise to deduce from this, the fact that $\sup_{x\in \mathbb{R}}{L_t(x)}<\infty$ and $d_t^B<\infty$ almost-surely (where $d_t^B$ was defined at (\ref{dbtdef})), and the definition of $\mathcal{P}$, that $m^{B, \alpha}_t$  is finite for any $\alpha\in(0,1)$, almost-surely. We also have the identity
\[ m^B_t = \sup_{i: L_t(x_i) > 0}  v_i  \]
almost-surely. Indeed, $L_t(x)>0$ if and only if $x\in(\inf_{s\in[0,t]}B_s,\sup_{s\in[0,t]}B_s)$ almost-surely (see, for example, \cite[Corollary 22.18]{Kall}). Moreover, we may assume that there are no points $(x_i,v_i)$ in $\mathcal{P}$ with $x_i\in\{\inf_{s\in[0,t]}B_s,\sup_{s\in[0,t]}B_s\}$ almost-surely. Hence, applying Lemma~\ref{lem:sum-to-max} to the set of points $(L_t(x_i), v_i))_{i \in \mathbb{N}} \in \mathbb{R}^+ \times(0,\infty)$ yields that, for each fixed $t$, as $\alpha \to 0$,
\[  \left( m^{B, \alpha}_t \right)^\alpha \to  m^B_t \]
almost-surely. By countability, we immediately deduce that this convergence holds for all rational times simultaneously. As the process $m^{B, \alpha}$ is non-decreasing for each $\alpha$ by definition, almost-sure convergence in $M_1$ follows.
\end{proof}

\begin{remark}
That the convergence in Proposition~\ref{prop:alpha0clock} does not hold in the stronger $J_1$ topology can be easily seen from the fact that $m^{B, \alpha}$ is continuous for each $\alpha$ whereas the limit process $m^{B}$ is not continuous.
\end{remark}

\begin{proof}[Proof of Theorem~\ref{thm:alpha0}]
The proof of Theorem~\ref{thm:alpha0} follows from Proposition~\ref{prop:alpha0clock} in an identical manner to in the proofs of Theorems \ref{thm:fltX} and \ref{thm:assumptX}, by applying the inversion and composition results of Lemmas \ref{lem:inv} and \ref{lem:comp}.
\end{proof}

\begin{remark} \label{alpha1rem}
A result corresponding to Theorem~\ref{thm:alpha0} can be also established for FIN diffusions in the $\alpha\to 1^-$ limit. In particular, we claim that as $\alpha \to 1^-$,
\begin{equation}\label{claim}
\left(B_{ I^{B, \alpha}_{(1-\alpha)^{-1}t}} \right)_{t \ge 0} \stackrel{U}{\rightarrow} \left( B_{t} \right)_{t\geq0},
\end{equation}
where $\stackrel{U}{\rightarrow}$ denotes uniform convergence over compact time intervals, almost-surely with respect to the joint law of $\mathcal{P}$ and $B$. Since it is not directly related to the main results of this paper, we only sketch a proof. Defining $(x_i,v_i)_{i\in\mathbb{N}}$ as before and setting $\Sigma:=\sum_{i:x_i\in[0,1],v_i\leq 1}v_i^{1/\alpha}$, it is an elementary exercise to check that
\[\mathbf{E}\left(\Sigma\right)=\frac{\alpha}{1-\alpha},\qquad\mathrm{Var}\left(\Sigma\right)=\frac{\alpha}{2-\alpha}.\]
(One can do so using Campbell's theorem for Poisson point processes, for example.) Consequently,
\[\mathbf{P}\left(\left|(1-\alpha)\Sigma-\alpha\right|\geq \varepsilon\right)\leq \frac{\alpha(1-\alpha)^2}{\varepsilon^2(2-\alpha)},\]
and a Borel-Cantelli argument yields
\begin{equation}\label{firstlim}
(1-\alpha)\Sigma\rightarrow 1,
\end{equation}
along the subsequence $\alpha=1-n^{-1}$, almost-surely. By the monotonicity of $\Sigma$ in $\alpha$, this is readily extended to almost-sure convergence as $\alpha \to 1^-$. From this, we deduce that
\begin{equation}\label{secondlim}
(1-\alpha)\sum_{i:x_i\in[a,b]}v_i^{1/\alpha}\rightarrow (b-a),\qquad\mbox{as }\alpha \to 1^-,\:\forall a\leq b,
\end{equation}
almost-surely (adding the finite number of terms with $v_i>1$ clearly does not affect the limit at (\ref{firstlim}), and then a countability argument and monotonicity can be used to establish (\ref{secondlim})). We note that the convergence at (\ref{secondlim}) implies almost-sure vague convergence of the measures $(1-\alpha)\sum_{i}\delta_{x_i}v_i^{1/\alpha}$ to the Lebesgue measure on the real line. Thus, using the continuity of the Brownian local times, we obtain
\[(1-\alpha)m_t^{B,\alpha}=(1-\alpha)\sum_{i}L_t(x_i)v_i^{1/\alpha}\rightarrow \int L_t(x)dx = t\]
uniformly over compact intervals of $t$, almost-surely. The claim at (\ref{claim}) then follows by taking inverses and composing with $B$, similarly to the proof of Theorem~\ref{thm:alpha0}.
\end{remark}

\section{Transparent traps}
\label{sec:trans}

In this section we establish the scaling limits for the $\beta$-transparent BTM of Theorem~\ref{thm:tt}. We achieve this by verifying the sufficient conditions given in \cite{BenArous13} for the convergence of randomly trapped random walks to the standard Brownian motion and the FK process respectively.

We begin by proving a consequence of the second-order slow-variation of $L$ for certain expectations involving $\tau_0$; the spirit is similar to that of de Haan's theorem, see \cite[Section 3.7]{Bingham87}.

\begin{proposition}
\label{prop:sosv} Assume that $L$ is second-order slowly varying, i.e.\ that it satisfies (\ref{sosv}). Let $f:\mathbb{R}^+ \to \mathbb{R}^+$ be a continuously differentiable function such that $f(t) \to 0$ as $t \to \infty$. Moreover, suppose that there exists a $\delta > 0$ for which $f(t)=o(t^\delta)$ as $t\rightarrow 0$, and both $f'(t) t^\delta$ and $f'(t) t^{-\delta}$ are integrable. Then the function
\[ \Gamma(n) := \mathbf{E}\left[ f(\tau_0 / n) \right]\]
satisfies
\[\lim_{n \to \infty} \frac{L(n) \Gamma(n)}{g(n)} = - \lambda  \int_0^\infty f'(t) \log t \, dt\]
for some constant $\lambda\neq 0$ that only depends on $L$ and $g$. In particular, if the integral on the right-hand side is non-zero (note that the assumptions ensure that it is finite), then $\Gamma$ is slowly varying.
\end{proposition}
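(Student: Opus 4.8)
The plan is to first pin down the precise form of $k$ and the regularity of the auxiliary function $g$, reducing to the case $k(v)=\lambda\log v$ with $g$ slowly varying, and then to express $\Gamma$ as an explicit integral against $f'$ to which a dominated-convergence argument applies.

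\emph{Step 1 (form of $k$; slow variation of $g$).} First I would record that, for any $v,w>0$,
\[
\frac{L(uvw)}{L(u)}-1=\left(\frac{L(uvw)}{L(uw)}-1\right)\frac{L(uw)}{L(u)}+\left(\frac{L(uw)}{L(u)}-1\right);
\]
dividing by $g(u)$ and letting $u\to\infty$, while using \eqref{sosv} at the scales $u$ and $uw$ and the slow variation of $L$ (so $L(uw)/L(u)\to1$), this shows that $\gamma(w):=\lim_{u\to\infty}g(uw)/g(u)$ exists for all $w>0$ — here the hypothesis that $k(v)\neq0$ for some $v$ is used — and that $k(vw)=k(v)\gamma(w)+k(w)$. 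Standard regular-variation arguments then give $\gamma(w)=w^\rho$, i.e.\ $g\in\mathrm{RV}(\rho)$, with $\rho\leq0$ since $g(u)\to0$. To rule out $\rho<0$ I would invoke the standing assumption that $L=1/\mathbf{P}(\tau_0>\cdot)$ is non-decreasing with $L(u)\to\infty$ (as $\tau_0\in(0,\infty)$ a.s.): the increments $\log L(2^{j+1})-\log L(2^j)$ are non-negative and asymptotic to $k(2)g(2^j)$ (forcing $k(2)>0$), and their partial sums equal $\log L(2^{N+1})-\log L(2)\to\infty$, so $\sum_j g(2^j)=\infty$, which is impossible for $g\in\mathrm{RV}(\rho)$ with $\rho<0$. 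Hence $\rho=0$, $g$ is slowly varying, the functional equation for $k$ becomes additive, and $k(v)=\lambda\log v$, with $\lambda\neq0$ by the non-degeneracy conditions on $k$. (This step is essentially de Haan's $\Pi$-variation theory applied to $L$ with auxiliary function $Lg$; cf.\ \cite[Chapter 3]{Bingham87}.)

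\emph{Step 2 (integral representation).} Since $f(0+)=0$ (because $f(t)=o(t^\delta)$) and $f(\infty)=0$, the fundamental theorem of calculus gives $f(\tau_0/n)=\int_0^{\tau_0/n}f'(t)\,dt$, and, since $t^\delta+t^{-\delta}\geq1$ for all $t>0$, the integrability hypotheses force $\int_0^\infty|f'(t)|\,dt<\infty$; as $L\geq1$, Fubini's theorem then applies and yields
\[
\Gamma(n)=\int_0^\infty f'(t)\,\mathbf{P}(\tau_0>nt)\,dt=\int_0^\infty\frac{f'(t)}{L(nt)}\,dt .
\]
Using $\int_0^\infty f'(t)\,dt=f(\infty)-f(0+)=0$, I would rewrite this as
\[
\frac{L(n)\Gamma(n)}{g(n)}=-\int_0^\infty f'(t)\,\frac{\tfrac{L(nt)}{L(n)}-1}{g(n)}\cdot\frac{L(n)}{L(nt)}\,dt ,
\]
whose integrand converges, for each fixed $t>0$, to $-\lambda f'(t)\log t$ as $n\to\infty$ by \eqref{sosv} and the slow variation of $L$.

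\emph{Step 3 (dominated convergence and conclusion).} The main obstacle is justifying the passage to the limit in the last display, which requires a bound of the form
\[
\left|\frac{\tfrac{L(nt)}{L(n)}-1}{g(n)}\right|+\frac{L(n)}{L(nt)}\ \leq\ C\bigl(t^{\delta'}+t^{-\delta'}\bigr),\qquad t>0,
\]
uniform over all large $n$, for some $\delta'<\delta$; this is the uniform (Potter-type) convergence theorem for second-order slow variation (together with the usual Potter bounds for the slowly varying $L$), which I would quote from \cite[Chapter 3]{Bingham87}. Since $|f'(t)|(t^\delta+t^{-\delta})$ is integrable and $|\log t|\leq C_{\delta'}(t^{\delta'}+t^{-\delta'})$, dominated convergence then gives
\[
\lim_{n\to\infty}\frac{L(n)\Gamma(n)}{g(n)}=-\lambda\int_0^\infty f'(t)\log t\,dt ,
\]
the right-hand integral being finite by the same bound. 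For the final assertion, if this integral is non-zero then $\Gamma(n)=(g(n)/L(n))\bigl(-\lambda\int_0^\infty f'(t)\log t\,dt+o(1)\bigr)$; since $g$ (by Step~1) and $L$ are both slowly varying, so is $g/L$, and multiplication by a factor converging to a non-zero constant preserves slow variation, whence $\Gamma$ is slowly varying.
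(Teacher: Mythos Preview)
Your approach coincides with the paper's: both identify $k(v)=\lambda\log v$ and the slow variation of $g$, both reduce $\Gamma(n)$ to an integral of $f'(t)$ against $(\bar F(n)-\bar F(nt))/(\bar F(n)g(n))$ (your integrand equals this exactly, since $\tfrac{L(nt)/L(n)-1}{g(n)}\cdot\tfrac{L(n)}{L(nt)}=\tfrac{\bar F(n)-\bar F(nt)}{\bar F(n)g(n)}$ with $\bar F=1/L$), and both conclude by dominated convergence.

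The one place your writeup is looser than the paper's is the domination in Step~3. The Potter-type bounds you invoke --- both the ordinary one for $L$ and the second-order one --- are stated in \cite[Chapter~3]{Bingham87} only for arguments above a fixed threshold, i.e.\ for $nt\ge n_0$, not for all $t>0$ uniformly in large $n$. The paper is explicit about this: it quotes the one-sided bound for $t\ge1$ (their~\eqref{eq:sosv3}), extends it by a substitution together with the slow variation of $\bar Fg$ to all $t\ge n_0/n$ (their~\eqref{eq:sosv4}), and isolates the contribution from $t<n_0/n$ as three boundary terms $T_1,T_2,T_3$, each shown to be $o(g(n)/L(n))$ using $f(t)=o(t^\delta)$. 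Your global bound can in fact be pushed through for small $t$ as well (for $nt<n_0$ the integrand is at most $|f'(t)|\,L(n)/g(n)$, and slow variation of $L/g$ gives $L(n)/g(n)\le C(n/n_0)^{\delta'}\le Ct^{-\delta'}$ there), but this step has to be written down; it is not directly quotable.

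A small slip in Step~1: non-negativity of $\log L(2^{j+1})-\log L(2^j)$ does not by itself force $k(2)>0$ --- if $k(2)=0$ the asymptotic is merely $o(g(2^j))$, which is consistent with non-negativity. One should instead pick $v_0>1$ with $k(v_0)\ne0$ (guaranteed by the non-degeneracy hypothesis on $k$ together with $k(1)=0$ from the functional equation) and run the telescoping argument along powers of $v_0$. The paper sidesteps this by citing the representation theorem \cite[Theorem~3.12.2]{Bingham87} directly.
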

\begin{proof} By standard de Haan theory, the second-order slowly varying property implies that $g(u)$ is regularly varying with a non-positive index (see the discussion at the start of \cite[Section 3.12]{Bingham87}). Applying this fact and the divergence of $L(u)$, it can be deduced from the representation theorem of \cite[Theorem 3.12.2]{Bingham87} that $g$ is actually slowly varying, and further that $k(v) = \lambda \log v$ for some constant $\lambda \neq 0$. Moreover, it is easy to see that
\[ \bar{F}(u) := \PP(\tau_0 > u) = 1/L(u) \]
is also second-order slowly varying, satisfying
\begin{align}
\label{eq:sosv2}
\lim_{u \to \infty} \frac{ \bar{F}(u) - \bar{F}(uv)}{\bar{F}(u) g(u)} = - k(1/v) = \lambda \log v, \quad v > 0,
\end{align}
for the same $g, k$ and $\lambda$. We therefore have the following uniform bound (see \cite[(3.1.5)]{Bingham87}): for any $\delta>0$, there exist constants $K, n_0 > 0$ such that
\begin{align}
\label{eq:sosv3}
\left| \frac{  \bar{F}(n ) - \bar{F}(nt)}{\bar{F}(n) g(n)}\right| \le K t^\delta \quad \text{for all } n > n_0,\:t\geq 1.
\end{align}
Suppose $t\in[n_0/n,1]$, where $n\geq n_0$. Setting $m=nt\geq n_0$, (\ref{eq:sosv3}) implies
\[ \left|\frac{  \bar{F}(n ) - \bar{F}(nt)}{\bar{F}(n) g(n)}\right|\leq Kt^{-\delta}\left|\frac{\bar{F}(m) g(m)}{\bar{F}(m/t) g(m/t)}\right|.\]
Since $\bar{F}g$ is slowly varying, we may bound $|{\bar{F}(m) g(m)}/{\bar{F}(m/t) g(m/t)}|$ by $K't^{-\delta}$. Reparameterising $\delta$, this yields that: for any $\delta>0$, there exist constants $K, n_0 > 0$ such that
\begin{align}
\label{eq:sosv4}
\left| \frac{  \bar{F}(n ) - \bar{F}(nt)}{\bar{F}(n) g(n)}\right| \le K \max\{t^\delta,t^{-\delta}\} \quad \text{for all } n > n_0,\:t\geq n_0/n.
\end{align}

For the remainder of the proof, we choose $\delta$ such that the assumptions on $f$ are satisfied, and select $K,n_0$ such that (\ref{eq:sosv4}) holds. Partitioning the integral and integrating by parts, we obtain
\begin{align*}
   \Gamma(n)  &= \mathbf{E}\left[ f(\tau_0 / n)\mathbf{1}_{\{\tau_0\geq n_0\}} \right]+\mathbf{E}\left[ f(\tau_0 / n)\mathbf{1}_{\{\tau_0<n_0\}} \right]\\
   & =   \left[  f(t) (1 - \bar{F}(nt) ) \right]_{n_0/n}^\infty  - \int_{n_0/n}^\infty f'(t) ( 1 - \bar{F}(nt)) \, dt  + T_1\\
  & = -\int_{n_0/n}^\infty f'(t) \left( \bar{F}(n) - \bar{F}(nt)\right) dt +T_1+T_2+T_3,
\end{align*}
where
\begin{align*}
T_1& :=\mathbf{E}\left[ f(\tau_0 / n)\mathbf{1}_{\{\tau_0<n_0\}} \right],\\
T_2& := -f(n_0/n)(1-\bar{F}(n_0)),\\
T_3& := - (1 - \bar{F}(n)) \int_{n_0/n}^\infty f'(t)  dt= (1 - \bar{F}(n)) f(n_0/n).
\end{align*}
Since $f(t)=o(t^\delta)$ as $t\rightarrow 0$ and $\bar{F}(n)g(n)$ is slowly varying, it holds that
\begin{align*}
\left|\frac{T_i}{\bar{F}(n)g(n)}\right|&\leq \left|\frac{\sup_{t\leq n_0/n}\left|f(t)\right|}{\bar{F}(n)g(n)}\right|\rightarrow 0
\end{align*}
as $n\to\infty$, for $i=1,2,3$. Thus, since both $f'(t) t^\delta$ and $f'(t) t^{-\delta}$ are integrable by assumption, the bound at (\ref{eq:sosv4}) allows us to apply the dominated convergence theorem to deduce that
\begin{align*}
 \lim_{n \to \infty} \frac{L(n) \Gamma(n)}{g(n)}
& =-\lim_{n \to \infty} \int_{0}^\infty f'(t) \frac{\bar{F}(n) - \bar{F}(nt)}{\bar{F}(n) g(n)}\mathbf{1}_{\{t\geq n_0/n\}} dt\\
  &= -  \int_0^\infty f'(t) \lim_{n \to \infty}  \frac{ \bar{F}(n) - \bar{F}(nt)}{\bar{F}(n) g(n)} \, dt \\
& = - \lambda  \int_0^\infty f'(t) \log t \, dt,
\end{align*}
where we recall the limit at (\ref{eq:sosv2}) to deduce the final, desired equality. Given that $L$ and $g$ are both slowly varying, and the class of slowly varying functions is closed under division, the second statement of the Proposition~readily follows.
\end{proof}

In the next result we derive asymptotic properties of the Laplace transform of $\pi_0$, the law of a holding time at $0$ conditional on $\tau_0$. In the proof, we will denote the law of $\tau_0$ under $\mathbf{P}$ by $\nu$.

\begin{proposition}
\label{prop:lp}
Assume $L$ is second-order slowly varying and let $\beta \in (0, 1)$. Define
\[ \hat{\pi}_0(\varepsilon) := \int_{0}^\infty e^{-\varepsilon t}\pi_0(dt).\]
Then the function
\[ \Gamma_1(\varepsilon) := \mathbf{E} \left[  1 -  \hat{\pi}_0(\varepsilon) \right] \]
satisfies
\[ \lim_{\varepsilon \to 0} \varepsilon^{-\beta} \bar{\ell}(\varepsilon^{-1})^{-1} \Gamma_1(\varepsilon)  = 1 \]
for the slowly varying function $ \bar{\ell}(n) := g(n)/L(n)$. Moreover, the function
 \[ \Gamma_2(\varepsilon) := \mathbf{E} \left[ \left(  1 - \hat{\pi}_0\left(\Gamma_1^{-1}(\varepsilon^2)\right)  \right)^2 \right]  \]
 satisfies
\[ \lim_{\varepsilon \to 0} \varepsilon^{-3} \Gamma_2(\varepsilon)  = 0, \]
where $\Gamma_1^{-1}(\varepsilon) := \inf \{s : \Gamma(s) > \varepsilon \}$ denotes the right-continuous inverse of $\Gamma_1$.
\end{proposition}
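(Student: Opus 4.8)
The plan is to reduce both assertions to the second-order estimate of Proposition~\ref{prop:sosv}. First I would compute $\hat\pi_0$ explicitly: conditioning on $\tau_0$ and using that an exponential random variable of mean $m$ has Laplace transform $(1+\varepsilon m)^{-1}$, the definition of $\pi_0$ gives the exact identity
\[ 1-\hat\pi_0(\varepsilon) = \min\{\tau_0^{-\beta},1\}\,\frac{\varepsilon\tau_0}{1+\varepsilon\tau_0} + \bigl(1-\min\{\tau_0^{-\beta},1\}\bigr)\frac{\varepsilon}{1+\varepsilon}. \]
Writing $n:=1/\varepsilon$ and $f(u):=u^{1-\beta}/(1+u)$, the substitution $u=\tau_0/n$ shows that on the event $\{\tau_0\ge1\}$ the first summand equals \emph{exactly} $\varepsilon^\beta f(\tau_0/n)$. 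Peeling off the second summand, the part of the first summand on $\{\tau_0<1\}$, and the difference between $\mathbf{E}[f(\tau_0/n)\mathbf{1}_{\{\tau_0\ge1\}}]$ and $\mathbf{E}[f(\tau_0/n)]$ --- each of which is $O(\varepsilon)$, using $\mathbf{E}[\tau_0\mathbf{1}_{\{\tau_0<1\}}]<\infty$ and $\sup_{0<t<1/n}f(t)=O(n^{-(1-\beta)})$ --- one obtains $\Gamma_1(\varepsilon)=\varepsilon^\beta\,\mathbf{E}[f(\tau_0/n)]+O(\varepsilon)$. Since $\bar\ell=g/L$ is a ratio of slowly varying functions, hence slowly varying, and $\beta<1$, we have $\varepsilon/(\varepsilon^\beta\bar\ell(1/\varepsilon))=\varepsilon^{1-\beta}/\bar\ell(1/\varepsilon)\to0$, so the error is $o(\varepsilon^\beta\bar\ell(1/\varepsilon))$ and it suffices to analyse $\mathbf{E}[f(\tau_0/n)]$.

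Next I would verify that $f$ meets the hypotheses of Proposition~\ref{prop:sosv}: it is $C^1$ on $(0,\infty)$, $f(t)\to0$ as $t\to\infty$, $f(t)\sim t^{1-\beta}$ and $f'(t)\sim(1-\beta)t^{-\beta}$ as $t\to0$, and $f'(t)\sim-\beta t^{-1-\beta}$ as $t\to\infty$, so that any $\delta\in(0,\min\{\beta,1-\beta\})$ works. Proposition~\ref{prop:sosv} then gives $L(n)\mathbf{E}[f(\tau_0/n)]/g(n)\to-\lambda\int_0^\infty f'(t)\log t\,dt$, and integrating by parts (the boundary terms vanish since $f(t)\log t\to0$ at $0$ and $\infty$) together with the standard Beta integral $\int_0^\infty t^{-\beta}(1+t)^{-1}\,dt=\pi/\sin(\pi\beta)$ evaluates this to $-\int_0^\infty f(t)/t\,dt=-\pi/\sin(\pi\beta)$. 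Hence $\Gamma_1(\varepsilon)\sim c_\beta\,\varepsilon^\beta g(1/\varepsilon)/L(1/\varepsilon)$ with $c_\beta:=\lambda\pi/\sin(\pi\beta)\ne0$; as $g$, and therefore $\bar\ell$, is only specified up to a positive multiplicative constant, we may normalise so that $c_\beta=1$, which gives the first claim. In particular $\Gamma_1$ is then regularly varying of index $\beta$ at $0$, and $\Gamma_1^{-1}$ of index $1/\beta$.

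For the second claim, set $\delta:=\Gamma_1^{-1}(\varepsilon^2)$, so that $\delta\to0$ and $\Gamma_1(\delta)\sim\varepsilon^2$. From the identity above and $x/(1+x)\le\min\{x,1\}$ one has $1-\hat\pi_0(\delta)\le\min\{\tau_0^{-\beta},1\}\min\{\delta\tau_0,1\}+\delta$, whence $(1-\hat\pi_0(\delta))^2\le2\bigl(\min\{\tau_0^{-\beta},1\}\min\{\delta\tau_0,1\}\bigr)^2+2\delta^2$. Splitting the range of $\tau_0$ into $\{\tau_0\le1\}$, $\{1<\tau_0\le1/\delta\}$ and $\{\tau_0>1/\delta\}$, the three contributions of the first term are bounded by $\delta^2$, by $\delta^2\,\mathbf{E}[\tau_0^{2-2\beta}\mathbf{1}_{\{1<\tau_0\le1/\delta\}}]\le C\delta^{2\beta}/L(1/\delta)$ (Karamata's theorem, valid since $2-2\beta>0$), and by $\mathbf{E}[\tau_0^{-2\beta}\mathbf{1}_{\{\tau_0>1/\delta\}}]\le\delta^{2\beta}/L(1/\delta)$ (monotonicity of $t\mapsto t^{-2\beta}$) respectively; together with $\delta^2=o(\delta^{2\beta}/L(1/\delta))$ this yields $\Gamma_2(\varepsilon)\le C\delta^{2\beta}/L(1/\delta)$. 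Using $\Gamma_1(\delta)\sim\varepsilon^2$ and $\Gamma_1(\delta)\sim\delta^\beta g(1/\delta)/L(1/\delta)$ from the first claim, $\delta^{2\beta}/L(1/\delta)\sim\Gamma_1(\delta)^2\,L(1/\delta)/g(1/\delta)^2\sim\varepsilon^4\,L(1/\delta)/g(1/\delta)^2$, so $\varepsilon^{-3}\Gamma_2(\varepsilon)\le C\varepsilon\,L(1/\delta)/g(1/\delta)^2$ for small $\varepsilon$. Since $\Gamma_1^{-1}$ is regularly varying of index $1/\beta$, $1/\delta$ is bounded by a power of $1/\varepsilon$, and since $L$ and $1/g$ are slowly varying with $g(u)\to0$, the ratio $L(1/\delta)/g(1/\delta)^2$ grows subpolynomially in $1/\varepsilon$; hence $\varepsilon\,L(1/\delta)/g(1/\delta)^2\to0$, proving $\varepsilon^{-3}\Gamma_2(\varepsilon)\to0$. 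I expect the main obstacle to be the bookkeeping in these estimates: checking that the $O(\varepsilon)$ error in the reduction of $\Gamma_1$ is negligible at the \emph{precise} scale $\varepsilon^\beta\bar\ell(1/\varepsilon)$ (which is exactly where $\beta<1$ and slow variation of $\bar\ell$ enter), and obtaining the Karamata-type bound for $\mathbf{E}[\tau_0^{2-2\beta}\mathbf{1}_{\{\tau_0\le1/\delta\}}]$ at exactly the order for which the re-expression $\delta^{2\beta}/L(1/\delta)\sim\Gamma_1(\delta)^2\,L(1/\delta)/g(1/\delta)^2$ closes the bound for $\Gamma_2$.
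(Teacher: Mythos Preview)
Your argument for $\Gamma_1$ follows essentially the same route as the paper: explicit computation of $\hat\pi_0$, reduction to $\varepsilon^\beta\,\mathbf{E}[f_1(\varepsilon\tau_0)]$ with $f_1(s)=s^{1-\beta}/(1+s)$ up to an $O(\varepsilon)$ error, and then an application of Proposition~\ref{prop:sosv}. You are in fact slightly more explicit than the paper about the multiplicative constant, correctly noting that $g$, and hence $\bar\ell=g/L$, is only determined up to a positive factor in the definition of second-order slow variation, so that one may normalise to obtain limit~$1$.

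For $\Gamma_2$ you take a genuinely different, and somewhat more elementary, path. The paper squares the identity for $1-\hat\pi_0$ and applies Proposition~\ref{prop:sosv} a second time, now to $f_2(s)=s^{2-2\beta}/(1+s)^2$, combined with the crude bound $\Gamma_1^{-1}(\varepsilon^2)\le c\,\varepsilon^{2/\beta-\delta}$, to obtain $\Gamma_2(\varepsilon)=O(\varepsilon^{4-\delta'})$ for arbitrarily small $\delta'>0$. You instead use the pointwise bound $x/(1+x)\le\min\{x,1\}$, split according to whether $\tau_0\le1$, $1<\tau_0\le1/\delta$ or $\tau_0>1/\delta$, and control the middle piece $\delta^2\,\mathbf{E}[\tau_0^{2-2\beta}\mathbf{1}_{\{\tau_0\le1/\delta\}}]$ via Karamata's theorem (only first-order slow variation of $1/L$ is used here). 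Your re-expression $\delta^{2\beta}/L(1/\delta)\sim\Gamma_1(\delta)^2\,L(1/\delta)/g(1/\delta)^2$ then recycles the first-part asymptotics and closes the estimate without a second appeal to the de~Haan machinery. Both approaches arrive at a bound of the form $\Gamma_2(\varepsilon)=O(\varepsilon^4\cdot\text{slowly varying})$, comfortably $o(\varepsilon^3)$; the paper's route is shorter given that Proposition~\ref{prop:sosv} is already in hand, while yours has the merit of isolating precisely where second-order slow variation is actually needed (only in the analysis of $\Gamma_1$) and handling $\Gamma_2$ with purely Tauberian tools.
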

\begin{proof} On $\tau_0\geq 1$, we have that
\[\hat{\pi}_0(\varepsilon)=\frac{1}{1 + \varepsilon} - \tau_0^{-\beta} \left( \frac{1}{1 + \varepsilon} - \frac{1}{1 + \varepsilon \tau_0} \right). \]
Otherwise,
\[\hat{\pi}_0(\varepsilon)= \frac{1}{1 + \varepsilon \tau_0}. \]
Consequently, as $\varepsilon\to 0$,
\begin{eqnarray*}
  \Gamma_1(\varepsilon)  &=&\int_0^1 \frac{\varepsilon t}{1 + \varepsilon t}\nu(dt)
+\int_1^\infty \left(\frac{\varepsilon }{1 + \varepsilon }+t^{-\beta} \left( \frac{1}{1 + \varepsilon} - \frac{1}{1 + \varepsilon t} \right)\right) \, \nu(dt) \\
&=&\int_1^\infty t^{-\beta} \left( \frac{ \varepsilon t}{1 + \varepsilon t} -\frac{\varepsilon}{1 + \varepsilon} \right) \, \nu(dt)+O(\varepsilon)\\
&=&\int_0^\infty \frac{ \varepsilon t^{1-\beta}}{1 + \varepsilon t}\nu(dt)+O(\varepsilon).
\end{eqnarray*}
After the change of variables $s = \varepsilon t$, this gives, as $\varepsilon \to 0$,
\[ \Gamma_1(\varepsilon) =   \varepsilon^{\beta} \int_0^\infty \frac{s^{1-\beta}}{1 + s} \, \nu(\varepsilon^{-1} ds)+O(\varepsilon)= \varepsilon^{\beta}\mathbf{E}\left(f_1(\varepsilon \tau_0)\right)+O(\varepsilon),\]
where
\[f_1(s) :=  \frac{s^{1-\beta}}{1 + s} .\]
It is easy to check that the conditions of Proposition~\ref{prop:sosv} are satisfied for $f_1$ (with $\delta < \min\{ \beta, 1 - \beta\}$). Moreover, integration by parts yields
\[\int_0^\infty f_1'(s)\log s ds = -\int_0^\infty \frac{f_1(s)}{s}ds <0,\]
and so an application of Proposition~\ref{prop:sosv} gives the first statement.

By the conclusion of the previous paragraph, we have that $\Gamma_1(\varepsilon)\sim \varepsilon^{\beta} \bar{\ell}(\varepsilon^{-1})$, and so $\Gamma_1^{-1}(\varepsilon^2)\sim \varepsilon^{2/\beta} {\ell}(\varepsilon^{-1})^{-1}$
for some slowly varying function $\ell$ (as $\varepsilon \to 0$). In particular, for any $\delta>0$, there exist $c,\varepsilon_0>0$ such that $\Gamma_1^{-1}(\varepsilon^2)\leq c\varepsilon^{\gamma}$ for every $\varepsilon\leq \varepsilon_0$, where $\gamma:=\frac{2}{\beta}-\delta$. Applying this bound and arguing similarly to above, we deduce that, as $\varepsilon \to 0$,
\begin{eqnarray*}
 \Gamma_2(\varepsilon^{1/\gamma}) &\leq & \mathbf{E} \left[ \left(  1 - \hat{\pi}_0\left(c\varepsilon\right)  \right)^2 \right]\\
 &=&  (c\varepsilon)^{2\beta} \int_0^\infty \frac{s^{2-2\beta}}{(1 + s)^2} \, \nu(\varepsilon^{-1} ds)+ O(\varepsilon^{1+\beta-\delta})\\
 &=&
 (c\varepsilon)^{2\beta}\mathbf{E}\left(f_2(\varepsilon \tau_0)\right)+O(\varepsilon^{1+\beta-\delta}),
 \end{eqnarray*}
where
\[f_2(s) :=  \frac{s^{2-2\beta}}{(1 + s)^2}. \]
Since $f_2$ also satisfies the conditions of Proposition~\ref{prop:sosv}, we obtain from this that $\Gamma_2(\varepsilon^{1/\gamma})=O(\varepsilon^{2\beta-\delta})$ as $\varepsilon \to 0$, and the result follows.
\end{proof}

\begin{proof}[Proof of Theorem~\ref{thm:tt}] We may consider the $\beta$-transparent BTM as a \textit{randomly trapped random walk} (in the language of \cite{BenArous13}). We proceed by verifying the sufficient conditions for convergence of randomly trapped random walks given in \cite{BenArous13}.

Suppose first that $\beta \ge 1$. Conditional on $\tau_0$, we have
\[ m( \pi_0 ) :=\int_0^\infty t \pi_0(dt)= \left(\tau_0^{1-\beta}+1-\tau_0^{-\beta}\right)\mathbf{1}_{\{\tau_0\geq 1\}}+\tau_0\mathbf{1}_{\{\tau_0<1\}}\le 2 .\]
Averaging over $\tau_0$ then gives $ \mu = \mathbf{E}( m(\pi_0)) \le  2 < \infty$. Hence the conditions of \cite[Theorem 2.9]{BenArous13} are satisfied, giving the result.

Suppose now that $\beta \in (0,1)$. Proposition~\ref{prop:lp} gives precisely the assumptions of \cite[Theorem 2.11]{BenArous13} (see also \cite[Remark 2.12]{BenArous13}), which immediately yields the result. The representation for the slowly varying function $\ell$ given in Remark~\ref{rem:svfrep} is also evident from \cite[Theorem 2.11]{BenArous13} and the proof of Proposition~\ref{prop:lp}.
\end{proof}

\bigskip

\appendix

\section{}
\label{sec:appendix}

\subsection{Topologies on the space of real-valued c\`{a}dl\`{a}g functions}
The purpose of this section is to describe the Skorohod topologies $J_1$ and $M_1$, as introduced in \cite{Skorokhod56}, and the non-Skorohod topology $L_{1,\rm{loc}}$ on the Skorohod space $D(\mathbb{R}^+)$ of real-valued c\`{a}dl\`{a}g functions on $\mathbb{R}^+$, in which our main results are proved; see \cite{Billingsley99, Whitt02} for a fuller account. Figure~\ref{fig:topology} gives a graphical illustration of the different kinds of discontinuities that the three topologies allow for sequences of convergent functions.

We first define convergence in the respective topologies on the Skorohod space $D([0, T])$ of real-valued c\`{a}dl\`{a}g functions on $[0, T]$, for fixed $T$.

$\mathbf{J}_1$: A sequence of functions $f_n \in D([0, T])$ converges to a function $f \in D([0, T])$ in the $J_1$ topology if there exists a sequence $\alpha_n: [0, T] \to [0, T]$ of continuous and one-to-one maps such that
\[ \sup_{t \le T}| \alpha_n(t) - t | \to 0  \quad \text{and} \quad \sup_{t \le T} | f_n( \alpha_n(t)) - f(t) | \to 0. \]
Note that the $J_1$ topology extends the usual topology of uniform convergence over compact time intervals by allowing jumps in $f$ to be matched by jumps in $f_n$ that occur at slightly different times, as long as these differences are negligible in the limit.

$\mathbf{M}_1$: For a function $f \in D([0, T])$ define its graph $\mathcal{G}^f \subset \mathbb{R}^+ \times \mathbb{R}$ to be the ordered set consisting of the function $f$ and the line segments
\[ \bigcup_{0\leq t \leq T} \, \{ \lambda f(t^-) + (1-\lambda) f(t) : 0 \le \lambda \le 1 \}\]
connecting each point of discontinuity of $f$. We remark that $\mathcal{G}^f$ can be continuously parameterised over $t \in [0, 1]$ in the natural way; let such a parameterisation be $\mathcal{G}^f(t)$. A sequence of functions $f_n \in D([0, T])$ converges to a function $f \in D([0, T])$ in the $M_1$ topology if there exists a sequence $\alpha_n: [0, 1] \to [0, 1]$ of continuous and one-to-one maps such that
\[ \sup_{t \le 1} \max_{i = 1, 2} | \pi_i \mathcal{G}^{f_n} (\alpha_n(t)) - \pi_i \mathcal{G}^{f} (t) | \to 0   \]
where $\pi_1$ and $\pi_2$ are the projections of the graph onto the domain and codomain coordinate respectively. Note that the $M_1$ topology extends the $J_1$ topology by allowing jumps in $f$ to be matched by multiple jumps in $f_n$ of lesser magnitude as long as they are essentially monotone and occur in negligible time in the limit.

$\mathbf{L}_{1, \rm{loc}}$: A sequence of functions $f_n \in D([0, T])$ converges to a function $f \in D([0, T])$ in the $L_{1,\rm{loc}}$ topology if
\[ \int_{t \le T} | f_n(t) - f(t) |dt \to 0. \]
Note that the $L_{1,\rm{loc}}$ topology extends both the $J_1$ and the $M_1$ topologies by allowing excursions in $f_n$ that are not present in $f$, as long as they are of negligible magnitude in the $L_1$ sense in the limit.

\begin{figure}[t]
\label{fig:topology}
\begin{tikzpicture}
\begin{scope}
\draw [very thick] (0, 0) -- (1,0);
\draw [very thick, dashed] (1, 0) node[anchor=north] {$a_n$} -- (1,3);
\filldraw (1,3) circle (3pt);
\draw [very thick] (1, 3) -- (3,3);
\draw [thin, dashed] (2, 0) node[anchor=north] {$1$} -- (2,3);
\draw [thin, dashed] (0.2, 3) node[anchor=east] {$1$} -- (1,3);
\draw [->] (1.2, -0.2) -- (1.5, -0.2);
\draw [very thick] (1.5, 3.2) node[anchor=south] {$J_1$} -- (1.5, 3.2);
\end{scope}
\begin{scope}[xshift=110pt]
\draw [very thick] (0, 0) -- (1,0);
\draw [very thick, dashed] (1, 0) node[anchor=north] {$a_n$} -- (1, 1.5);
\filldraw (1, 1.5) circle (3pt);
\draw [very thick] (1, 1.5) -- (1.5, 3) -- (3, 3);
\draw [thin, dashed] (2, 0) node[anchor=north] {$1$} -- (2,3);
\draw [thin, dashed] (0.2, 1.5) node[anchor=east] {$\frac{1}{2}$} -- (1, 1.5);
\draw [thin, dashed] (0.2, 3) node[anchor=east] {$1$} -- (1.5,3);
\draw [->] (1.2, -0.2) -- (1.5, -0.2);
\draw [very thick] (1.5, 3.2) node[anchor=south] {$M_1 \,$, but not $J_1$} -- (1.5, 3.2);
\end{scope}
\begin{scope}[xshift=220pt]
\draw [very thick] (0, 0) -- (1,0) node[anchor=north] {$a_n$};
\draw [very thick, dashed] (1, 0) node[anchor=north] {$a_n$} -- (1, 1);
\filldraw (1, 1) circle (3pt);
\draw [very thick] (1, 1) -- (1.4, 3) -- (1.8, 2) -- (2.6, 2) -- (3.4, 0) ;
\draw [very thick] (3.4, 2) -- (3.7, 2);
\draw [thin, dashed] (2.6, 0) node[anchor=north] {$2$} -- (2.6, 2);
\filldraw (3.4, 2) circle (3pt);
\draw [very thick, dashed]  (3.4, 0) node[anchor=north] {$b_n$} -- (3.4, 2);
\draw [thin, dashed] (1.8, 0) node[anchor=north] {$1$} -- (1.8,2);
\draw [thin, dashed] (0.2, 3) node[anchor=east] {$\frac{3}{2}$} -- (1.4,3);
\draw [thin, dashed] (0.2, 1) node[anchor=east] {$\frac{1}{2}$} -- (1,1);
\draw [thin, dashed] (0.2, 2) node[anchor=east] {$1$} -- (1.8, 2);
\draw [->] (1.2, -0.2) -- (1.5, -0.2);
\draw [->] (3.2, -0.2) -- (2.9, -0.2);
\draw [very thick] (1.8, 3.2) node[anchor=south] {$L_{1,\rm{loc}}$, but not $J_1$ or $M_1$} -- (1.8, 3.2);
\end{scope}
\end{tikzpicture}
\caption{Examples of sequences of functions in $D(\mathbb{R}^+)$ that converge to the function $\mathbf{1}_{[1, \infty)}(\cdot)$ in the $J_1$, $M_1$ and $L_{1,\rm{loc}}$ topologies respectively, where $a_n := 1 - n^{-1}$ and $b_n := 2 + n^{-1}$.}
\end{figure}
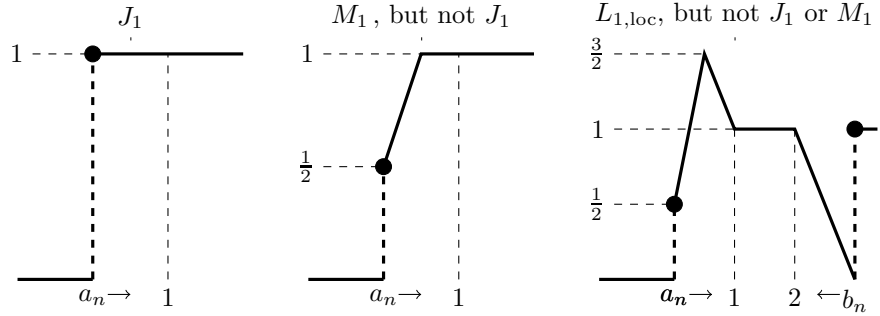

To extend the above definitions to the Skorohod space $D(\mathbb{R}^+)$, we say that a sequence of functions $f_n \in D(\mathbb{R}^+)$ converges to a function $f \in D(\mathbb{R}^+)$ in the $J_1$ (respectively $M_1$ and $L_{1,\rm{loc}}$) topology if and only if their restrictions to $[0, T]$ converge with respect to the $J_1$ (respectively $M_1$ and $L_{1,\rm{loc}}$) topology on $D([0, T])$ for every continuity point $T$ of $f$.

To summarise, we have that the $J_1$, $M_1$ and $L_{1,\rm{loc}}$ topologies are strictly ordered in the following sense, where we write $\stackrel{J_1}{\to}$, $\stackrel{M_1}{\to}$ and $\stackrel{L_1}{\to}$ for convergence in the relevant topologies.

\begin{proposition}
For a sequence of functions $f_n\in D(\mathbb{R}^+)$ and a function $f \in D(\mathbb{R}^+)$,
\[ \ f_n \stackrel{J_1}{\to} f \ \implies \ f_n \stackrel{M_1}{\to}  f \  \implies \ f_n \stackrel{L_1}{\to}  f,\]
but none of the converse implications hold in general.
\end{proposition}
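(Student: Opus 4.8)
The plan is to prove the two implications in the stated order and then rule out the converses using the three functions drawn in Figure~\ref{fig:topology}. In each case I would reduce to a fixed interval $[0,T]$: since convergence on $D(\mathbb{R}^+)$ is defined by convergence of the restrictions to $[0,T]$ for every continuity point $T$ of the limit, it suffices to establish each implication on $D([0,T])$ and then intersect over such $T$.

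For $f_n\stackrel{J_1}{\to}f\Rightarrow f_n\stackrel{M_1}{\to}f$ I would start from $J_1$ time changes $\alpha_n$ and note that $\tilde f_n:=f_n\circ\alpha_n$ converges to $f$ uniformly on $[0,T]$. The observation driving the argument is that uniform convergence rules out `overshoot': near a jump time $\tau$ of $f$, the c\`adl\`ag function $\tilde f_n$ lies within $o(1)$ of $f(\tau^-)$ on a left-neighbourhood of $\tau$ and within $o(1)$ of $f(\tau)$ on a right-neighbourhood, hence within $o(1)$ of the segment joining these two values throughout the (vanishing-width) transition. Using this one can build a parametrisation $(u_n',v_n')$ of the completed graph $\mathcal{G}^{\tilde f_n}$ that shadows a fixed parametrisation $(u,v)$ of $\mathcal{G}^f$ -- parametrise in tandem along the pieces where $\tilde f_n\approx f$, and across each jump let $(u_n',v_n')$ traverse the transition of $\tilde f_n$ over the parameter range in which $(u,v)$ traverses the corresponding vertical segment -- so that $\|u_n'-u\|_\infty+\|v_n'-v\|_\infty\to 0$. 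Since $\mathcal{G}^{f_n}$ is the image of $\mathcal{G}^{\tilde f_n}$ under $(t,y)\mapsto(\alpha_n(t),y)$ and $\|\alpha_n-\mathrm{id}\|_\infty\to 0$, the curve $(\alpha_n\circ u_n',v_n')$ parametrises $\mathcal{G}^{f_n}$ and stays uniformly close to $(u,v)$; reparametrising it to agree with the fixed parametrisation of $\mathcal{G}^{f_n}$ then yields the required $M_1$ time changes. The bookkeeping over the (countably many) jumps of $f$ is routine; see \cite{Whitt02}.

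For $f_n\stackrel{M_1}{\to}f\Rightarrow f_n\stackrel{L_1}{\to}f$ on $[0,T]$ I would work directly with the defining data: fixed parametrisations $(u,v)$ and $(u_n,v_n)$ of $\mathcal{G}^f$ and $\mathcal{G}^{f_n}$, and connecting homeomorphisms $\beta_n$ of $[0,1]$ with $\|u_n\circ\beta_n-u\|_\infty\to 0$ and $\|v_n\circ\beta_n-v\|_\infty\to 0$. Two facts follow immediately. First, $\sup_{t\le T}|f_n(t)|\le\sup_s|v_n(s)|=\sup_s|v_n(\beta_n(s))|\le\|v\|_\infty+1$ for all large $n$, so the $f_n$ are uniformly bounded on $[0,T]$. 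Second, if $t_0\in(0,T)$ is a continuity point of $f$, choosing $r_n$ with $(u_n(r_n),v_n(r_n))=(t_0,f_n(t_0))\in\mathcal{G}^{f_n}$ and $s_n:=\beta_n^{-1}(r_n)$ gives $u(s_n)\to t_0$ and $|v(s_n)-f_n(t_0)|\to 0$, and since $t_0$ is a continuity point every subsequential limit of $(s_n)$ is carried by $(u,v)$ to $(t_0,f(t_0))$, so $f_n(t_0)\to f(t_0)$. As $f$ has at most countably many discontinuities in $[0,T]$ and is bounded there, the bounded convergence theorem gives $\int_0^T|f_n-f|\,dt\to 0$, as required.

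Finally, for the converses: the middle function of Figure~\ref{fig:topology} converges to $\mathbf{1}_{[1,\infty)}$ in $M_1$ (its limiting unit jump is realised by a jump of size $1/2$ followed by a monotone continuous climb, all within a time interval of length $n^{-1}$) but not in $J_1$, since a $J_1$ time change would have to match the unit jump of the limit by a single jump of $f_n$ of size tending to $1$, whereas $f_n$ has only one jump, of size $1/2$; and the right-hand function converges to $\mathbf{1}_{[1,\infty)}$ in $L_{1,\rm{loc}}$ (the overshoot near $t=1$ and the notch near $t=2$ are of bounded height and width $n^{-1}$, hence carry vanishing $L_1$-mass) but not in $M_1$, being non-monotone excursions with no counterpart in the limit, which the graph-parametrisation analysis above forbids. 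The remaining non-implication $L_{1,\rm{loc}}\not\Rightarrow J_1$ is then automatic. The hard part will be making the $J_1\Rightarrow M_1$ graph-parametrisation construction simultaneously precise and short; this is the step I would be most inclined to outsource to \cite{Whitt02}.
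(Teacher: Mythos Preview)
The paper does not actually prove this proposition: it is stated without proof, as a well-known summary of the relationship between the three topologies (with Figure~\ref{fig:topology} supplying the counterexamples). Your proposal is therefore not competing with any argument in the paper but filling in what the authors left implicit.

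Your proof is correct. The $M_1\Rightarrow L_{1,\mathrm{loc}}$ step is the cleanest: uniform boundedness from the parametrisations plus pointwise convergence at continuity points, then bounded convergence, is exactly the right argument. The $J_1\Rightarrow M_1$ step is also sound in outline, though as you yourself note the graph-parametrisation bookkeeping over countably many jumps is fiddly to write out fully; citing \cite[Theorem~12.3.2]{Whitt02} (or the surrounding discussion) is the pragmatic choice, and is in keeping with the paper's style, which relies on \cite{Whitt02} throughout the appendix. Your use of the middle and right-hand panels of Figure~\ref{fig:topology} for the two non-implications matches exactly what the figure was drawn for.
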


\subsection{Convergence lemmas}

The purpose of this section is to present the characterisations of convergence in the topologies of interest on $D(\mathbb{R}^+)$ that we will appeal to in the proofs of our main results. Let $w_t^n, x_t^n$ and $y_t^n$ be sequences of stochastic processes in $D(\mathbb{R}^+)$. Throughout we will assume that each $w_t^n$, $x_t^n$ and $y_t^n$ is non-decreasing with
\[ \lim_{n \to \infty} w_0^n =  \lim_{n \to \infty} x_0^n =  \lim_{n \to \infty} y_0^n = 0 \]
in probability.

We will also suppose that there is a limiting stochastic process $x_t \in D(\mathbb{R}^+)$ such that, as $n \to \infty$,
\[ (w_t^n)_{t \ge 0} \stackrel{J_1}{\Rightarrow} (x_t)_{t \ge 0}  \quad \text{and} \quad  (y_t^n)_{t \ge 0} \stackrel{J_1}{\Rightarrow} (x_t)_{t \ge 0},\]
where we recall that $\stackrel{J_1}{\Rightarrow}$, $\stackrel{M_1}{\Rightarrow}$ and $\stackrel{L_1}{\Rightarrow}$ denotes weak convergence in the $J_1$ and $M_1$ and $L_{1,\rm{loc}}$ topologies respectively.

We first give sufficient conditions under which the stochastic processes $x_t^n$ also converge weakly to the limit process $x_t$ in the $J_1$ and $M_1$ topologies respectively. For technical reasons we state our results in a way that allows for an auxiliary process to converge simultaneously. For a sequence of probability measures on $D(\mathbb{R}^+) \times D(\mathbb{R}^+)$, denote by
\[\quad \stackrel{J_1/J_1}{\Rightarrow} \ , \quad \stackrel{M_1/J_1}{\Rightarrow} \quad \text{and} \quad \stackrel{L_1/J_1}{\Rightarrow}\]
weak convergence of the first component in the $J_1$, $M_1$ and $L_{1,\rm{loc}}$ topologies respectively, and the simultaneous weak convergence of the second component in the $J_1$ topology. Then let $z^n_t$ be an auxiliary sequence of stochastic processes in $D(\mathbb{R}^+)$ such that, as $n \to \infty$,
\begin{equation}\label{assumpconv}
  (w_t^n, \, z^n_t)_{t \ge 0} \stackrel{J_1 / J_1}{\Rightarrow} (x_t, \, z_t)_{t \ge 0}  \quad \text{and} \quad  (y_t^n, \, z^n_t)_{t \ge 0} \stackrel{J_1 / J_1}{\Rightarrow} (x_t, \, z_t)_{t \ge 0}, 
  \end{equation}
for a limit process $z_t$ in $D(\mathbb{R}^+)$.

\begin{lemma}[Uniform convergence in space implies $J_1$ convergence]
\label{lem:sconv1}
Assume that, for any $T > \delta > 0$, as $n \to \infty$,
\begin{align}
\label{eq:conv1}
\sup_{t \in [\delta, T]} | x^n_t - w^n_t | \to 0 \qquad \text{in probability}.
\end{align}
Then, as $n \to \infty$,
\[ (x^n_t, \, z^n_t)_{t \ge 0} \stackrel{J_1/J_1}{\Rightarrow} (x_t, \, z_t)_{t \ge 0}.\]
\end{lemma}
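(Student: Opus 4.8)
The plan is to deduce the joint weak convergence of $(x^n,z^n)$ from that of $(w^n,z^n)$ by a soft tightness-and-subsequence argument, the key point being that hypothesis \eqref{eq:conv1} forces \emph{every} subsequential limit of $x^n$ to coincide with the corresponding limit of $w^n$. Two standard reductions come first. (i) Since $\stackrel{J_1/J_1}{\Rightarrow}$ on $D(\mathbb{R}^+)\times D(\mathbb{R}^+)$ is equivalent to the corresponding weak convergence of restrictions to $D([0,T])\times D([0,T])$ for $T$ in a dense set on which the limit $x$ is almost surely continuous, I fix such a (deterministic) $T$ and work on $[0,T]$, recalling that each $D([0,T])$ with the $J_1$ topology is Polish. (ii) I record that $x$ is non-decreasing with $x_0=0$ almost surely: non-decreasing functions form a $J_1$-closed set, and $f\mapsto f(0)$ is $J_1$-continuous (admissible time-changes fix the endpoints), so $w^n_0\Rightarrow x_0$ while $w^n_0\to 0$ in probability, forcing $x_0=0$; right-continuity then gives $x_\delta\downarrow 0$ almost surely as $\delta\downarrow 0$.

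The first substantive step is to show that $(x^n_t,w^n_t,z^n_t)_{t\in[0,T]}$ is tight in $D([0,T])^3$ with the product $J_1$ topology. Tightness of the $w^n$ and $z^n$ coordinates is immediate from their convergence. For the $x^n$ coordinate I verify Billingsley's criterion: the one-point laws $\{x^n_t\}_n$ are tight for each $t$ (by monotonicity $0\le x^n_t\le x^n_T$, and $|x^n_T-w^n_T|\to 0$ in probability with $w^n_T\Rightarrow x_T$), and the $J_1$ oscillation modulus of $x^n$ is controlled by taking a near-optimal $J_1$ partition of $[\delta,T]$ for $w^n$, comparing oscillations interval by interval, and adjoining $[0,\delta)$ as a single block; this yields a bound of the shape
\[
w'(x^n,h)\ \le\ w'(w^n,h)+2\sup_{t\in[\delta,T]}|x^n_t-w^n_t|+x^n_\delta
\]
for all sufficiently small $h$. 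Letting $h\to0$ (using tightness of $w^n$), then invoking \eqref{eq:conv1}, and finally letting $\delta\to0$ (using $x_\delta\downarrow 0$) gives the required modulus control. The use of monotonicity of $x^n$ on $[0,\delta]$, where \eqref{eq:conv1} provides no information, is essential here.

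Given tightness, it remains to identify the subsequential limits. Along any subsequence one may extract a further one along which $(x^n,w^n,z^n)\Rightarrow(X,W,Z)$ in $D([0,T])^3$; then $(W,Z)\stackrel{d}{=}(x,z)$ by hypothesis, and I claim $X=W$ almost surely. To see this, realise the convergence almost surely via the Skorohod representation theorem, then pass (using that $\sup_{t\in[\delta,T]}|x^n_t-w^n_t|\to0$ in probability, hence along a further subsequence almost surely, simultaneously for all $\delta=1/k$) to a subsequence along which $x^n\to X$ and $w^n\to W$ almost surely in $J_1$ and $\sup_{t\in[1/k,T]}|x^n_t-w^n_t|\to0$ almost surely for every $k$. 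Evaluating at the almost-surely-full set of common continuity points of $X$ and $W$ shows $X_t=W_t$ for almost every $t\in(0,T]$, hence for all $t\in(0,T]$ by right-continuity, and $X_0=0=W_0$ because $x^n_0,w^n_0\to0$. Consequently every subsequential limit of $(x^n,z^n)$ is distributed as $(x,z)$, and together with tightness this gives $(x^n_t,z^n_t)_{t\in[0,T]}\Rightarrow(x_t,z_t)_{t\in[0,T]}$ in $D([0,T])^2$. Reassembling over the dense set of $T$ yields the claim.

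I expect the main obstacle to be the tightness of $x^n$ in the $J_1$ topology: monotonicity together with uniform boundedness does not suffice by itself (for instance two jumps of comparable size coalescing in the limit gives $M_1$ but not $J_1$ tightness), so one genuinely has to exploit the uniform closeness \eqref{eq:conv1} to the $J_1$-convergent sequence $w^n$ on $[\delta,T]$ to exclude such behaviour, and then separately handle the germ of the processes at $t=0$ by monotonicity. The identification of subsequential limits, by contrast, is routine once the Skorohod coupling is set up.
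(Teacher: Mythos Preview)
Your proposal is correct and follows essentially the same route as the paper: both establish $J_1$-tightness of $x^n$ via Billingsley's criterion, transferring the modulus control from the $J_1$-convergent $w^n$ to $x^n$ on $[\delta,T]$ by \eqref{eq:conv1} and handling the germ at $t=0$ via monotonicity and $x^n_0\to0$. The only difference is in the identification step---the paper checks finite-dimensional-distribution convergence directly (replacing $w^n_{t_i}$ by $x^n_{t_i}$ using \eqref{eq:conv1} at continuity points $t_i>0$), whereas you pass through a subsequence and Skorohod representation; both are standard and equivalent here.
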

\begin{proof} The convergence of the finite dimensional distributions (where the time indices are points at which $x$ and $z$ are continuous almost-surely) immediately follows from the convergence at (\ref{assumpconv}) and \eqref{eq:conv1}; we need only show the tightness of the sequence $x^n_t$ in the $J_1$ topology. In particular, we need to show that, for each $T>0$,
\[\lim_{\lambda\rightarrow \infty}\limsup_{n\rightarrow \infty}\mathbf{P}\left(\sup_{t\leq T}\left|x_t^n\right|\geq \lambda\right)=0\]
and, for each $\varepsilon>0$,
\[\lim_{\delta\rightarrow0}\limsup_{n\rightarrow \infty}\mathbf{P}\left(\inf_{\substack{0=t_0<t_1<\dots<t_m=T:\\\min_i(t_i-t_{i-1})>\delta}}\sup_{i=1,\dots, m}\sup_{s,t\in[t_{i-1},t_i)}\left|x_s^n-x_t^n\right|\geq \varepsilon\right)=0\]
(see \cite[Theorem 16.8]{Billingsley99}). Now, the convergence of $w_t^n$ in the $J_1$ topology implies that the two conditions in \cite[Theorem 16.8]{Billingsley99} are satisfied for $w_t^n$. It is an elementary exercise to check from this, the uniform convergence in equation \eqref{eq:conv1}, and the assumption that each $w_t^n$ and $x_t^n$ is non-decreasing with $w_0^n$ and $x_0^n$ converging to zero, that the conditions are also satisfied for $x^n_t$.
\end{proof}

\begin{lemma}[Squeeze convergence in space and time implies $M_1$ convergence]
\label{lem:sconv2}
Assume that, for any $T > \delta > 0$, $t \ge 0$ and $n \in \mathbb{N}$ there exists a random, measurable $s^n_t \in [t, t + \delta]$ such that, as $n \to \infty$,
\begin{align}
\label{eq:conv2}
 \PP \left( w^n_{s^n_t} - \delta  < x^n_{s^n_t} < y^n_{s^n_t} + \delta  \quad \text{for all } t \in [\delta, T] \, \right) \to 1.
 \end{align}
Further, assume the limit process $(x_t,z_t)$ is almost-surely continuous at each fixed time $t$. Then, as $n \to \infty$,
\[ (x^n_t, \, z^n_t)_{t \ge 0} \stackrel{M_1/J_1}{\Rightarrow} (x_t, \, z_t)_{t \ge 0}.\]
\end{lemma}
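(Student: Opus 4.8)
The plan is to argue via the standard subsequence principle: it suffices to show that every convergent subsequence of $(x^n_t, z^n_t)_{t \ge 0}$ has distributional limit $(x_t, z_t)_{t \ge 0}$ in the $M_1 \times J_1$ topology. Three ingredients are needed: $M_1$-tightness of $(x^n_t)_{t\ge0}$, an upgrade of the two separate hypotheses on $w^n$ and $y^n$ to a \emph{joint} functional limit, and a final squeezing argument. For tightness, note that each $x^n$ is non-decreasing, so its $M_1$-oscillation function vanishes identically; hence (essentially by Helly's selection theorem, cf.\ \cite{Whitt02}) $M_1$-tightness of $(x^n_t)_{t\le T}$ reduces to tightness of $\sup_{t\le T} x^n_t = x^n_T$, and the squeeze \eqref{eq:conv2} together with monotonicity gives $x^n_T \le x^n_{s^n_T} < y^n_{s^n_T} + \delta \le y^n_{T+\delta}+\delta$, whose right-hand side is tight since $(y^n_t)_{t\ge0} \stackrel{J_1}{\Rightarrow} (x_t)_{t\ge0}$. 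Tightness at the left endpoint is handled similarly, using $x^n_0 \to 0$ (and $x_0 = 0$).

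The crux is the claim that $(w^n_t, y^n_t, z^n_t)_{t\ge0}$ converges in distribution, in the product $J_1$ topology, to $(x_t, x_t, z_t)_{t\ge0}$ --- where, crucially, the two limiting copies of $x$ coincide. The triple is tight (each coordinate converges), so let $(\tilde w, \tilde y, \tilde z)$ be any subsequential limit. By hypothesis $(\tilde w, \tilde z) \stackrel{d}{=} (x,z) \stackrel{d}{=} (\tilde y, \tilde z)$, so in particular $\tilde w \stackrel{d}{=} x \stackrel{d}{=} \tilde y$ and, by the continuity assumption on the limit, both $\tilde w$ and $\tilde y$ are a.s.\ continuous at each fixed time. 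Passing to a Skorohod coupling with a.s.\ $J_1$ convergence, on the event of \eqref{eq:conv2} we have, for every $t \in [\delta,T]$, using $s^n_t \in [t,t+\delta]$ and monotonicity of $y^n$, that $w^n_t \le w^n_{s^n_t} < x^n_{s^n_t}+\delta < y^n_{s^n_t}+2\delta \le y^n_{t+\delta}+2\delta$. Since pointwise convergence holds at every fixed time (a continuity point of both limits) and \eqref{eq:conv2} has probability tending to one, we deduce $\tilde w_t \le \tilde y_{t+\delta}+2\delta$ a.s.\ for all fixed $t,\delta$; letting $\delta \downarrow 0$ along rationals and using right-continuity gives $\tilde w \le \tilde y$ a.s.\ in $D(\mathbb{R}^+)$. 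As moreover $\tilde w_t \stackrel{d}{=} \tilde y_t$ for each $t$, the non-negative variable $\tilde y_t - \tilde w_t$ vanishes a.s.\ (compare the expectations of a bounded strictly increasing function of $\tilde w_t$ and of $\tilde y_t$), so $\tilde w = \tilde y$ a.s.\ and hence $(\tilde w,\tilde y,\tilde z) \stackrel{d}{=} (x,x,z)$; uniqueness of the subsequential limit gives the claim.

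To conclude, given a subsequence, use the $M_1$-tightness of $(x^n_t)$ together with the previous step to pass to a further subsequence along which $(w^n_t, x^n_t, y^n_t, z^n_t)_{t\ge0}$ converges in the $J_1 \times M_1 \times J_1 \times J_1$ topology; by the previous paragraph the limit has the form $(\chi_t, \tilde x_t, \chi_t, z_t)_{t\ge0}$ with $(\chi, z) \stackrel{d}{=} (x,z)$. In a Skorohod coupling, \eqref{eq:conv2} and monotonicity yield $w^n_{t-\delta}-\delta \le x^n_t \le y^n_{t+\delta}+\delta$ for $t \in [2\delta,T]$; integrating over $t \in [2\delta,T]$, letting $n\to\infty$ (convergence in $M_1$ or $J_1$ of uniformly bounded non-decreasing functions entails convergence of the integrals) and then $\delta \downarrow 0$ gives $\int_0^T \chi_t\,dt \le \int_0^T \tilde x_t\,dt \le \int_0^T \chi_t\,dt$ for every $T$; since $\tilde x$ and $\chi$ then share the same right-continuous integral function, $\tilde x = \chi$ a.s., so $(x^n_t, z^n_t)_{t\ge0} \stackrel{M_1/J_1}{\Rightarrow} (x_t,z_t)_{t\ge0}$. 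I expect the main obstacle to be exactly the second paragraph: the hypotheses provide the convergence of $w^n$ and $y^n$ only separately (each jointly with $z^n$), and the key is to exploit that they share the common limit $x$ and that the same-time squeeze forces any joint subsequential limit to be ordered --- hence, having the same law, equal --- which is what ultimately pins down $x^n$. Passing to integrals in the final step is a device to avoid needing a priori continuity of the limit $\tilde x$ at fixed times.
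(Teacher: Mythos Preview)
Your argument is correct and takes a genuinely different, more explicit route than the paper. The paper's proof is a two-line sketch in the classical \emph{finite-dimensional distributions plus tightness} style: it asserts that fdd convergence ``immediately follows'' from the squeeze and monotonicity, and then checks the $M_1$-tightness criterion of \cite[Theorem~12.12.3]{Whitt02} by observing that the modulus bounds on $w^n$ and $y^n$ transfer to $x^n$. You instead work via the subsequence principle, and in doing so you supply a step the paper leaves entirely to the reader: the \emph{joint} convergence $(w^n,y^n,z^n)\Rightarrow(x,x,z)$. Your observation that any subsequential limit $(\tilde w,\tilde y)$ satisfies $\tilde w\le\tilde y$ a.s.\ (from the derived inequality $w^n_t\le y^n_{t+\delta}+2\delta$, which, crucially, no longer involves the random times $s^n_t$ and so survives the Skorohod coupling) together with $\tilde w_t\stackrel{d}{=}\tilde y_t$ forcing $\tilde w=\tilde y$ a.s.\ is exactly what is needed to pin down $x^n_t$ \emph{jointly} with $z^n_t$; the separate hypotheses $(w^n,z^n)\Rightarrow(x,z)$ and $(y^n,z^n)\Rightarrow(x,z)$ do not by themselves give this. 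Your final device of comparing integrals rather than pointwise values neatly avoids the difficulty that the $M_1$-subsequential limit $\tilde x$ is not a~priori known to be almost-surely continuous at fixed times. One very minor remark: the claim that the $M_1$-oscillation of a non-decreasing function ``vanishes identically'' is literally true for the standard value-space modulus (since $x^n_{t_2}\in[x^n_{t_1},x^n_{t_3}]$), though for the graph-space version the paper writes down it is only $O(\delta)$; either way the tightness conclusion stands.
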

\begin{proof} The convergence of the finite dimensional distributions immediately follows from (\ref{assumpconv}), the squeeze convergence in equation \eqref{eq:conv2}, and the fact that $w_t^n$, $x_t^n$ and $y_t^n$ are all non-decreasing; we need only show the tightness of the sequence $x^n_t$ in the $M_1$ topology. Using the characterisation of tightness in \cite[Theorem 12.12.3]{Whitt02}, we need to show in particular that, for each $T > 0$,
\[\lim_{\lambda\rightarrow \infty}\limsup_{n\rightarrow \infty}\mathbf{P}\left(\sup_{t\leq T}\left|x_t^n\right|\geq \lambda\right)=0\]
and, for each $\varepsilon>0$,
\[\lim_{\delta\rightarrow0} \limsup_{n\rightarrow \infty}\mathbf{P}\left(\sup_{t \in [0, T]} \sup_{ \substack{ \max\{0, t- \delta\} \le t_1 \\ < t_2 < t_3 \le \min\{t + \delta, T\} } } \left\{  \left| \left| x_{t_2}^n- \left[ x_{t_1}^n, x_{t_3}^n \right] \right| \right| \right\}  > \varepsilon \right)  \]
where $\left| \left| x_{t_2}^n- \left[ x_{t_1}^n, x_{t_3}^n \right] \right| \right|$ denotes the Hausdorff distance in  $\mathbb{R}^+ \times \mathbb{R}$ between the point $(t_2, x_{t_2}^n)$ and the line segment joining the points $(t_1, x_{t_1}^n)$ and $(t_3, x_{t_3}^n)$. Now, the convergence of $w_t^n$ and $x_t^n$ in the $J_1$ (and hence $M_1$) topology imply that the above two conditions are satisfied for $w_t^n$ and $y_t^n$. A combination of this, equation \eqref{eq:conv2} and the fact that each $w_t^n$, $x_t^n$ and $y_t^n$ is non-decreasing with $w_0^n$, $x_0^n$ and $y_0^n$ converging to zero, then implies that the two conditions are also satisfied for $x_t^n$.
\end{proof}

Finally, we give basic inversion and composition lemmas for the topologies. We henceforth assume that $x_t^n$ and $z_t^n$ are deterministic functions in $D(\mathbb{R}^+)$ such that $x_t^n$ is non-decreasing with
\[ \lim_{n \to \infty} x_0^n = 0.  \]
We further assume that, as $n \to \infty$,
\begin{align}
\label{eq:prod}
(x_t^n, \, z^n_t)_{t \ge 0} \stackrel{J_1/J_1}{\to} (x_t, \, z_t)_{t \ge 0}
\end{align}
for some $x_t,z_t\in D(\mathbb{R}^+)$, where the notation means $\stackrel{J_1/J_1}{\to}$ that the first component converges in the $J_1$ topology and the second in the $J_1$ topology. (We will use similar notation when $J_1$ is replaced by $M_1$ or $L_{1,\rm{loc}}$.) Furthermore, we assume that the limit process $z_t \in C(\mathbb{R}^+)$, the space of real-valued continuous functions on $\mathbb{R}^+$ (so that the convergence of the second coordinate can actually be considered as uniform convergence over compact time intervals). The reason that we insist on the presence of an auxiliary process is to take advantage of the composition lemma that we state below.

\begin{lemma}[Inversion lemma]
\label{lem:inv}
Let $v_t^n$ and $v_t$ denote the right-continuous inverses of $x_t^n$ and $x_t$ respectively, and further assume that $v_t$ is based at the origin. Then, as $n \to \infty$,
\[ (v^n_t, \, z^n_t)_{t \ge 0} \stackrel{M_1/J_1}{\to} (v_t, \, z_t)_{t \ge 0}.\]
The same conclusion holds if we weaken the assumption in equation \eqref{eq:prod} to
\[ (x_t^n, \, z^n_t)_{t \ge 0} \stackrel{M_1/J_1}{\to} (x_t, \, z_t)_{t \ge 0}.\]
\end{lemma}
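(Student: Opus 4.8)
The plan is to reduce the statement to a single, standard fact: that the right-continuous inverse map is continuous from the set of non-decreasing functions in $D(\mathbb{R}^+)$, equipped with the $M_1$ topology, to itself. First I would observe that, since convergence in the product topology is equivalent to coordinatewise convergence and the inversion affects only the first coordinate, it suffices to show that $x^n \to x$ in $M_1$ implies $v^n \to v$ in $M_1$: the convergence $z^n \to z$ in $J_1$ is then inherited directly from the hypothesis \eqref{eq:prod}, and recombining the two coordinates yields the claimed $M_1/J_1$ convergence. This also disposes of both assertions of the lemma at once, since $J_1$-convergence of $x^n$ is a fortiori $M_1$-convergence of $x^n$.

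For the core claim I would work directly with completed graphs. After reducing to a compact interval — recall that $M_1$-convergence on $D(\mathbb{R}^+)$ means $M_1$-convergence of the restrictions to $[0,T]$ over all continuity points $T$ of the limit, so for a fixed continuity point $U$ of $v$ I would pick a continuity point $T$ of $x$ with $x_{T^-} > U$ and note that, for $n$ large, $v^n|_{[0,U]}$ depends only on $x^n|_{[0,T]}$ — the key point is that the completed graph of the inverse of a non-decreasing function $f$ on $[0,T]$ is precisely the completed graph of $f$ on $[0,T]$ with its two coordinates interchanged (jumps of $f$ becoming flat stretches of $f^{-1}$, flat stretches of $f$ becoming filled-in jumps of $f^{-1}$), traversed in the same, increasing, direction. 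Since the coordinate swap is an isometry of $\mathbb{R}^2$, any parametrization witnessing $x^n \to x$ in $M_1$ on $[0,T]$ also witnesses $v^n \to v$ in $M_1$ over the corresponding range, up to the contributions of the two endpoints. The contribution at $T$ is rendered negligible by the choice $x_{T^-} > U$; the contribution at the origin is exactly what the two boundary hypotheses control — $x^n_0 \to 0$ forces the missing initial segment of the reflected graph of $x^n$ to have vanishing length, and the standing assumption that $v$ is based at the origin identifies the completed graph of $v$ near $0$ with the reflection of that of $x$. (Alternatively, one may simply invoke the continuity of the inverse map from \cite[Section 13.6]{Whitt02}, once one has checked that the boundary hypotheses place the limit in the regime treated there.)

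I expect the main obstacle to be bookkeeping rather than anything conceptual: carefully reconciling the conventions for completed graphs and $M_1$-parametrizations on $D([0,T])$ with the definition of $M_1$-convergence on $D(\mathbb{R}^+)$, keeping track of which portion of $v^n$ is controlled by which portion of $x^n$, and verifying that the endpoint corrections at $0$ and at $T$ are genuinely negligible under the stated hypotheses. The same reflection picture also makes transparent why the conclusion cannot be strengthened to $J_1$: a flat stretch of the limit $x$ need not be shared by the $x^n$, so the corresponding jump of $v$ is approximated by a continuous steep rise of $v^n$, which is the hallmark of $M_1$- but not $J_1$-convergence; thus the $M_1$ topology in the conclusion is sharp.
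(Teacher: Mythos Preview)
Your proposal is correct. The paper's own proof is a one-liner: it simply invokes the continuity of the right-continuous inverse map in the $M_1$ topology from \cite[Corollary 13.6.5]{Whitt02}, noting that the assumptions $x_0^n \to 0$ and $v_0 = 0$ handle the behaviour at the origin. This is exactly the alternative you flag at the end of your second paragraph, so your approach and the paper's coincide at that point.

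Where you differ is in also sketching the mechanism behind Whitt's result --- the observation that the completed graph of the inverse is the coordinate-swap of the completed graph of the original, so that any $M_1$-parametrization transfers directly, modulo endpoint bookkeeping. This is a genuine addition: it makes the lemma self-contained and explains transparently why the conclusion is $M_1$ rather than $J_1$ (your final remark about flat stretches of $x$ becoming jumps of $v$ is precisely the content of the paper's Remark following the lemma, which cites \cite[Example 13.6.1]{Whitt02}). The cost is the bookkeeping you anticipate at $0$ and at $T$; the paper avoids this entirely by outsourcing to Whitt. Your reduction of the joint statement to the first coordinate alone is also sound here, since in this part of the appendix the sequences are taken to be deterministic, so $M_1/J_1$ convergence is nothing more than componentwise convergence in the product topology.
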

\begin{proof}
This is a consequence of the continuity of right-continuous inverses in the $M_1$ topology (see \cite[Corollary 13.6.5]{Whitt02}; the continuity at zero follows from the assumptions that $x_0^n$ converges to zero and that $v_t$ is based at the origin).
\end{proof}

\begin{remark}
Note that, even under the assumption of equation \eqref{eq:prod}, the conclusion of Lemma~\ref{lem:inv} does not hold in general in the stronger $J_1$ topology; see the discussion in \cite[Example 13.6.1]{Whitt02}.
\end{remark}

\begin{lemma}[Composition lemma]
\label{lem:comp}
As $n \to \infty$,
\[ (z^n_{x^n_t})_{t \ge 0} \stackrel{J_1}{\to} (z_{x_t})_{t \ge 0}.\]
If instead the assumption in equation \eqref{eq:prod} is weakened to
\[ (x_t^n, \, z^n_t)_{t \ge 0} \stackrel{M_1/J_1}{\to} (x_t, \, z_t)_{t \ge 0},\]
then we may only conclude that, as $n \to \infty$,
\[ (z^n_{x^n_t})_{t \ge 0} \stackrel{L_1}{\to} (z_{x_t})_{t \ge 0}.\]

\end{lemma}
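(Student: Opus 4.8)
The plan is to treat the two assertions separately, reducing in each case to the situation $z^n \equiv z$ and then exploiting that the outer function $z$, being continuous, is uniformly continuous on compact sets. Throughout I fix a continuity point $T$ of $x$; since $x^n$ is non-decreasing and $x^n_T \to x_T$ (convergence at continuity points holds in $J_1$ and in $M_1$), the functions $(x^n_t)_{t \le T}$ eventually take values in a fixed compact interval $[0,M]$, and the range of $x$ on $[0,T]$ lies in the same interval. Because $z \in C(\mathbb{R}^+)$, the $J_1$-convergence $z^n \to z$ is equivalent to locally uniform convergence (see \cite{Billingsley99}), so $\varepsilon_n := \sup_{s \le M}|z^n_s - z_s| \to 0$, and hence $\sup_{t \le T}|z^n_{x^n_t} - z_{x^n_t}| \le \varepsilon_n \to 0$. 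As uniform closeness is compatible with both the $J_1$ and the $L_{1,\rm{loc}}$ topologies, it therefore suffices to show that $(z_{x^n_t})_{t \ge 0}$ converges to $(z_{x_t})_{t \ge 0}$ in $J_1$ (respectively in $L_{1,\rm{loc}}$) under the corresponding hypothesis on the $x^n$.

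For the $J_1$ statement I would use the continuous increasing bijections $\lambda_n : [0,T] \to [0,T]$ supplied by the $J_1$-convergence of $x^n$, which satisfy $\sup_{t \le T}|\lambda_n(t) - t| \to 0$ and $\sup_{t \le T}|x^n_{\lambda_n(t)} - x_t| \to 0$. Since $(z \circ x^n)\circ \lambda_n = z \circ (x^n \circ \lambda_n)$ and the functions $x^n \circ \lambda_n$ eventually take values in $[0,M]$, the uniform continuity of $z$ there gives $\sup_{t \le T}|z_{x^n_{\lambda_n(t)}} - z_{x_t}| \to 0$, which is $J_1$-convergence on $D([0,T])$. The continuity points of $x$ are co-countable, and (as $x$ is monotone) they are also continuity points of $z \circ x$, so this upgrades to $J_1$-convergence on $D(\mathbb{R}^+)$; together with the reduction above, the first claim follows.

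For the $M_1$ statement I would only expect the weaker $L_{1,\rm{loc}}$ conclusion. Here I would invoke the ordering of the topologies recorded above ($M_1$-convergence implies $L_{1,\rm{loc}}$-convergence) to get $\int_0^T |x^n_t - x_t|\,dt \to 0$ for every $T$. Given $\eta > 0$, pick $\delta > 0$ with $|z_a - z_b| \le \eta$ whenever $a, b \in [0,M]$ and $|a-b| \le \delta$, and split $[0,T]$ according to whether $|x^n_t - x_t| \le \delta$; the complementary set has Lebesgue measure at most $\delta^{-1}\int_0^T |x^n_t - x_t|\,dt$ by Markov's inequality, so
\[ \int_0^T |z_{x^n_t} - z_{x_t}|\,dt \;\le\; \eta T + 2\Bigl(\sup_{s \le M}|z_s|\Bigr)\,\delta^{-1}\!\int_0^T |x^n_t - x_t|\,dt \;\longrightarrow\; \eta T \]
as $n \to \infty$; letting $\eta \downarrow 0$ yields $L_{1,\rm{loc}}$-convergence of $z \circ x^n$ to $z \circ x$, and with the reduction step the second claim follows.

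I expect the only genuinely delicate point to be this last case, and it is also the reason the conclusion must degrade from $J_1$ to $L_{1,\rm{loc}}$: under the $M_1$ hypothesis $x^n$ may traverse a jump of the limit $x$ continuously, so $z \circ x^n$ performs an excursion of order-one amplitude in vanishing time — invisible to $J_1$ and $M_1$ but negligible in the $L_1$ sense — and the function of the uniform-continuity splitting above is precisely to transport $L_{1,\rm{loc}}$-convergence through the continuous map $z$. Everything else is standard manipulation of the Skorohod time-changes and of the integral estimate, together with the (routine) verification that the reductions at the start are legitimate.
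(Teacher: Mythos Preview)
Your proof is correct and follows essentially the same route as the paper's. The only difference is cosmetic: for the $J_1$ statement the paper simply cites \cite[Theorem 3.1]{Whitt80} (equivalently \cite[Theorem 13.2.2]{Whitt02}), whereas you unpack that result explicitly via the $J_1$ time-changes $\lambda_n$ and the uniform continuity of $z$ on the compact range; for the $M_1\!\to\!L_{1,\rm{loc}}$ statement, both arguments split off $|z^n_{x^n_t}-z_{x^n_t}|$ using locally uniform convergence $z^n\to z$, and then control $\int_0^T|z_{x^n_t}-z_{x_t}|\,dt$ by the same uniform-continuity/Markov-inequality trick, using that $M_1$-convergence implies $L_{1,\rm{loc}}$-convergence of $x^n$ to $x$.
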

\begin{proof}
The first statement is standard, proven for example in \cite[Theorem 3.1]{Whitt80} (see also \cite[Theorem 13.2.2]{Whitt02}). As for the second statement, we start by noting that
\begin{equation}\label{twoterms}
\int_{t\leq T}\left|z_{x_t^n}^n-z_{x_t}\right|dt\leq \int_{t\leq T}\left|z_{x_t^n}^n-z_{x_t^n}\right|dt+\int_{t\leq T}\left|z_{x_t^n}-z_{x_t}\right|dt.
\end{equation}
Now, since $x_t^n\to x_t$ in the $M_1$ topology, we must have that $\sup_{n}\sup_{t\leq T}x_t^n$ is bounded above by some $T_1<\infty$. Hence the first term on the right-hand side of (\ref{twoterms}) satisfies
\[\int_{t\leq T}\left|z_{x_t^n}^n-z_{x_t^n}\right|dt\leq T\sup_{t\leq T_1}\left|z_{t}^n-z_{t}\right|,\]
which converges to zero as $n\to\infty$ by the uniform convergence of $z_t^n$ to $z_t$ over compact time intervals. We now deal with the second term on the right-hand side of (\ref{twoterms}). First note that we can also assume that $\sup_{t\leq T}x_t\leq T_1<\infty$ (adjusting $T_1$ if necessary). Moreover, the continuity of $z$ yields that $\sup_{t\leq T_1}|z_t|\leq C<\infty$. Putting these bounds together, we find that, for every $\varepsilon>0$,
\[\int_{t\leq T}\left|z_{x_t^n}-z_{x_t}\right|dt\leq T\sup_{\substack{s,t\leq T_1:\\|s-t|<\varepsilon}}\left|z_{s}-z_{t}\right|+
C\int_{t\leq T} \mathbf{1}_{\{|x_t^n-x_t|\geq \varepsilon\}}dt.\]
Since $x_t^n$ converges to $x_t$ in the $M_1$ topology, the same is true in the $L_{1,\rm{loc}}$ topology, which implies that the second term here converges to 0 as $n\to\infty$. Again appealing to the continuity of $z$, the first term can be made arbitrarily small by taking $\varepsilon$ small. This confirms that
\[\lim_{n\to\infty}\int_{t\leq T}\left|z_{x_t^n}^n-z_{x_t}\right|dt=0,\]
as desired.
\end{proof}

\begin{remark}
The assumption that $z_t \in C(\mathbb{R}^+)$ is essential for the first conclusion of the previous result. Indeed, it no longer holds in general if we assume only that $z_t \in D(\mathbb{R}^+)$ with convergence in the $J_1$ topology; see the discussion in \cite[Example 13.2.2]{Whitt02}.
\end{remark}

\begin{remark}
The fact that the second convergence statement in Lemma~\ref{lem:comp} fails to hold in any of the Skorohod topologies lies at the heart of why we resort to the coarser non-Skorohod $L_{1,\rm{loc}}$ topology in Theorems \ref{thm:fltX} and \ref{thm:assumptX}; see the discussion in \cite[Example 13.2.4]{Whitt02}.
\end{remark}

\bibliography{paper}{}

\begin{thebibliography}{10}

\bibitem{BenArous13}
G.~{Ben Arous}, M.~Cabezas, J.~\v{C}ern\'{y}, and R.~Royfman.
\newblock Randomly trapped random walks.
\newblock {\em Ann. Probab. (to appear)}, 2014.

\bibitem{BenArous12}
G.~{Ben Arous} and O.~G{\"u}n.
\newblock Universality and extremal aging for dynamics of spin glasses on
  subexponential time scales.
\newblock {\em Comm. Pure Appl. Math.}, 65:77--127, 2012.

\bibitem{BenArous06}
G.~{Ben Arous} and J.~\v{C}ern\'{y}.
\newblock Dynamics of trap models.
\newblock {\em Math. Stat. Physics Lecture Notes -- Les Houches Summer School},
  83, 2006.

\bibitem{Billingsley99}
P.~Billingsley.
\newblock {\em Convergence of Probability Measures}.
\newblock John Wiley \& Sons, 1999.

\bibitem{Bingham87}
N.H. Bingham, C.M. Goldie, and J.L. Teugels.
\newblock {\em Regular Variation}.
\newblock Cambridge University Press, 1987.

\bibitem{Bouchaud92}
J.P. Bouchaud.
\newblock Weak ergodicity breaking and aging in disordered systems.
\newblock {\em J. Phys. I (France)}, 2:1705--1713, 1992.

\bibitem{Bovier13}
A.~Bovier, V.~Gayrard, and A.~\v{S}vejda.
\newblock Convergence to extremal processes in random environments and extremal
  ageing in {SK} models.
\newblock {\em Probab. Theory Relat. Fields}, 157:251--283, 2013.

\bibitem{Croydon13}
D.~Croydon, A.~Fribergh, and T.~Kumagai.
\newblock Biased random walk on critical {G}alton--{W}atson trees conditioned
  to survive.
\newblock {\em Probab. Theory Relat. Fields}, 157:453--507, 2013.

\bibitem{FIN02}
L.R.G. Fontes, M.~Isopi, and C.M. Newman.
\newblock Random walks with strongly inhomogeneous rates and singular
  diffusions: Convergence, localization and aging in one dimension.
\newblock {\em Ann. Probab.}, 30(2):579--604, 2002.

\bibitem{Fontes13}
L.R.G. Fontes and P.~Mathieu.
\newblock On the dynamics of trap models in $\mathbb{Z}^d$.
\newblock {\em Proc. London Math. Soc.}, 108(6):1562--1592, 2013.

\bibitem{Gun13}
O.~G{\"u}n.
\newblock Extremal aging for trap models.
\newblock {\em arXiv:1312.1137}, 2013.

\bibitem{Kall}
O.~Kallenberg.
\newblock {\em Foundations of modern probability}.
\newblock Springer-Verlag, New York, 2002.

\bibitem{Kasahara86}
Y.~Kasahara.
\newblock A limit theorem for sums of i.i.d. random variables with slowly
  varying tail probability.
\newblock {\em J. Math. Kyoto. Univ.}, 37:197--205, 1986.

\bibitem{Lamperti64}
J.~Lamperti.
\newblock On extreme order statistics.
\newblock {\em Ann. Math. Statist.}, 35:1726--1737, 1964.

\bibitem{LL01}
E.~H. Lieb and M.~Loss.
\newblock {\em Analysis}, volume~14 of {\em Graduate Studies in Mathematics}.
\newblock American Mathematical Society, 2001.

\bibitem{Mathieu14}
P.~Mathieu and J.-C. Mourrat.
\newblock Aging of asymmetric dynamics on the random energy model.
\newblock {\em Probab. Theory Relat. Fields}, 2014.

\bibitem{Muirhead14}
S.~Muirhead.
\newblock Two-site localisation in the {B}ouchaud trap model with
  slowly-varying traps.
\newblock {\em arXiv:1402.4983}, 2014.

\bibitem{Resnick87}
S.~Resnick.
\newblock {\em Extreme Values, Regular Variation, and Point Processes}.
\newblock Springer, 1987.

\bibitem{Skorokhod56}
A.V. Skorokhod.
\newblock Limit theorems for stochastic processes.
\newblock {\em Th. Probab. Appl.}, 1:261--290, 1956.

\bibitem{Whitt80}
W.~Whitt.
\newblock Some useful functions for functional limit theorems.
\newblock {\em Math. Oper. Res.}, 5(1):67--85, 1980.

\bibitem{Whitt02}
W.~Whitt.
\newblock {\em Stochastic-Process Limits}.
\newblock Springer, 2002.

\end{thebibliography}
\bibliographystyle{plain}

\end{document}